\newtheorem{thm}{Theorem}[section]
\newtheorem{cor}[thm]{Corollary}
\newtheorem{prob}[thm]{Problem}
\newtheorem{lem}[thm]{Lemma}
\newtheorem{prop}[thm]{Proposition}
\theoremstyle{definition}
\newtheorem{defn}[thm]{Definition}
\theoremstyle{remark}
\newtheorem{rem}[thm]{Remark}
\numberwithin{equation}{section}
\begin{document}
\keywords{ $C^*$-algebra, $AW^*$-algebra, Monotone complete  $C^*$-algebra, Order isomorphism, Preserver problem.}
\subjclass[2020]{Primary 47B49 ;
Secondary 46L51, 46L05 }
\author   [Y. El Khatiri]{Youssef El Khatiri}
\address{Department of Mathematics
	\\ Mathematical Research Center of Rabat
	\\ Laboratory of Mathematics, Statistics and Applications
	\\ Faculty of Sciences \\ Mohammed V University in Rabat  \\ Rabat, Morocco.}

\email{\textcolor[rgb]{0.00,0.00,0.84}{ elkhatiriyoussef@hotmail.com\\
		youssef\_elkhatiri2@um5.ac.ma}}

\title{Order isomorphisms  in $C^*$-algebras}

\maketitle

\begin{abstract}
We provide a complete description of the order isomorphisms between the self-adjoint parts of $C^*$-algebras. Furthermore, we characterize such isomorphisms between general operator intervals in $AW^*$-algebras. For the description, we use Jordan $^*$-isomorphisms and closed operators in the regular rings of  $AW^{*}$-algebras. This work generalizes previous results on von Neumann algebras.  
\end{abstract}

\section{Introduction}

A map $\Phi: (\mathcal{F},\leq_{\mathcal{F}})\to (\mathcal{G},\leq_{\mathcal{G}})$ between two partially ordered sets   is said to be  an order isomorphism if it is a bijection that preserves the order in both directions, i.e., $$a \leq_{\mathcal{F}}b\iff \Phi(a)\leq_{\mathcal{G}} \Phi(b),\qquad a,b\in \mathcal{F}.$$

The study of order isomorphisms has received considerable attention from many authors. 
For instance, Connes classified linear order isomorphisms between noncommutative $L^2$-spaces; see~\cite{Connes}. 
Also, Arendt and De Cannière studied these isomorphisms in the  setting of Fourier algebras, demonstrating that such isomorphisms characterize the underlying locally compact groups; see~\cite{ Arendt}. In this paper, we focus on studying these isomorphisms in the context of $C^*$-algebras, for several reasons. 
 $C^*$-algebras provide a rigorous mathematical framework for describing the noncommutative structures underlying the formulation of quantum mechanics. 
The pivotal idea rests on the Gelfand--Naimark theorem, which establishes that every $C^*$-algebra can be represented as an algebra of bounded operators on a Hilbert space. 
In this framework, the observables of a quantum system correspond to the self-adjoint elements of a $C^*$-algebra, while the states are given by positive linear functionals on this algebra; see, e.g., \cite{B3}. Additionally, classifying order isomorphisms between operator intervals in $C^*$-algebras remains a highly challenging problem within the theory of preservers.

One of the fundamental results on order isomorphisms in the context of $C^*$-algebras is due to Kadison. In \cite{Kadison1} and  \cite{Kadison2}, he classified the linear isometric maps between $C^*$-algebras.
 We note that there is a strong connection between these maps and order isomorphisms.  Notably, any isometry becomes a linear order isomorphism after a simple transformation.  This connection is also apparent in the main results of both papers, which can be summarized as follows: 
\begin{thm}(Kadison)	
Any  unital linear order isomorphism between the self-adjoint parts of $C^*$-algebras extends uniquely to a Jordan $^*$-isomorphism between the corresponding $C^*$-algebras.
\end{thm}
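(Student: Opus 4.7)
The plan is as follows. Let $\Phi : A_{sa} \to B_{sa}$ be a unital linear order isomorphism. First, I would observe that $\Phi$ and $\Phi^{-1}$ are both positive: since $\Phi$ is linear, $\Phi(0) = 0$, and for $a \in A_{sa}$ the equivalence $a \geq 0 \iff \Phi(a) \geq 0$ is immediate from the order isomorphism property. Thus $\Phi$ is a positive unital linear bijection with positive inverse. I would then define a complex-linear extension $\widetilde\Phi : A \to B$ by $\widetilde\Phi(x) := \Phi(\Re x) + i\,\Phi(\Im x)$, where $\Re x = \tfrac{1}{2}(x+x^*)$ and $\Im x = \tfrac{1}{2i}(x - x^*)$. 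By construction $\widetilde\Phi$ is $\mathbb{C}$-linear, agrees with $\Phi$ on $A_{sa}$, and satisfies $\widetilde\Phi(x^*) = \widetilde\Phi(x)^*$.

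The crux of the argument is to show that $\widetilde\Phi$ preserves the Jordan product, for which I would invoke the Kadison inequality: any positive unital linear map $\Psi$ between $C^*$-algebras satisfies $\Psi(a^2) \geq \Psi(a)^2$ for every $a \in A_{sa}$. Applied to $\Phi$, this gives $\Phi(a^2) \geq \Phi(a)^2$. Applied to $\Phi^{-1}$ at the element $\Phi(a) \in B_{sa}$, it gives $\Phi^{-1}(\Phi(a)^2) \geq a^2$; applying $\Phi$ (which preserves the order) yields the reverse inequality $\Phi(a)^2 \geq \Phi(a^2)$. Combining,
\[
\Phi(a^2) = \Phi(a)^2 \qquad (a \in A_{sa}).
\]
Polarizing through $a\circ b = \tfrac12\bigl((a+b)^2 - a^2 - b^2\bigr)$ and using the linearity of $\Phi$ then yields $\Phi(a\circ b) = \Phi(a)\circ \Phi(b)$ for all $a,b \in A_{sa}$.

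Finally, I would lift this Jordan identity to all of $A$: writing $x = \Re x + i\,\Im x$, $y = \Re y + i\,\Im y$, and using the complex linearity together with the self-adjoint case, the identity $\widetilde\Phi(xy+yx) = \widetilde\Phi(x)\widetilde\Phi(y) + \widetilde\Phi(y)\widetilde\Phi(x)$ for $x,y \in A$ follows by expansion. Bijectivity of $\widetilde\Phi$ is inherited from that of $\Phi$ via the same real-imaginary decomposition. For uniqueness, any Jordan $^*$-isomorphism is $\mathbb{C}$-linear and $^*$-preserving, so it is entirely determined by its restriction to $A_{sa}$, forcing it to coincide with $\widetilde\Phi$.

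The main obstacle is arranging the two applications of the Kadison inequality so that they combine into an equality; this requires verifying that $\Phi$ is genuinely a positive \emph{linear} unital map (so that Kadison's theorem applies), and then correctly transporting the inequality for $\Phi^{-1}$ back across $\Phi$ while preserving the direction of the order. Once $\Phi(a^2) = \Phi(a)^2$ is established, the remainder of the proof is essentially algebraic polarization plus the standard complexification.
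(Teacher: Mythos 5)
Your proposal is correct: the two-sided application of the Kadison--Schwarz inequality $\Psi(a^2)\geq\Psi(a)^2$ to both $\Phi$ and $\Phi^{-1}$, followed by polarization of $\Phi(a^2)=\Phi(a)^2$ and the standard complexification $\widetilde\Phi(x)=\Phi(\Re x)+i\,\Phi(\Im x)$, is exactly Kadison's classical argument. The paper itself offers no proof of this statement --- it is quoted as background with a citation to Kadison's papers \cite{Kadison1,Kadison2} --- so your argument coincides with the intended (original) one.
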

 It is natural to inquire whether we can describe these  isomorphisms, without the linearity assumption, using Jordan $^*$-isomorphisms as well. If not, what is the form of these isomorphisms?

 When the linearity assumption is removed from Kadison’s theorem, the study of order isomorphisms on general $C^*$-algebras becomes considerably more challenging, requiring the development of a new approach.
 This became apparent in subsequent papers  that address order isomorphisms without the linearity assumption. These papers focused on specific $C^*$-algebras possessing rich structural properties, such as commutative algebras, those with minimal projections, algebras exhibiting spatial properties.  
 
 For the commutative case, in \cite{sanchez}, Cabello S\'anchez provided a complete description of all order isomorphisms between the self-adjoint parts of commutative $C^*$-algebras. Subsequently, Ehsani presented a complete characterization of all order isomorphisms between the effect algebras of these $C^*$-algebras; see \cite{Ehsani}. These examples are merely illustrative among several related results in the literature. For the noncommutative case, Molnár provided a characterization of order automorphisms between the self-adjoint parts of the full algebra of bounded operators on a Hilbert space; see \cite{molnar}.  In the same direction, \v{S}emrl characterized order isomorphisms between all operator intervals in \cite{Semrl0, Semrl}. Subsequently, Mori established in \cite{mori} a complete description of order isomorphisms between operator intervals in general von Neumann algebras. These results generalize  several earlier works by Molnár and \v{S}emrl concerning type~$\mathrm{I}$ factors. A parallel generalization of the works of Molnár and \v{S}emrl in the setting of algebras of continuous operator-valued functions on compact spaces was obtained by Abdelali and El~Khatiri; see \cite{Our}.  

This raises considerable interest in the overall structure of order isomorphisms between operator intervals in arbitrary $C^*$-algebras.  Mori posed the following question in \cite[Section 5]{mori}.
\begin{prob}\label{problem1}
Let $\mathcal{A}$ and $\mathcal{B}$ be two $C^*$-algebras. Suppose that $\Phi$ is an order isomorphism between their self-adjoint parts. Under what conditions must $\Phi$ be an affine map?
\end{prob}

The problem arises naturally, since in most von Neumann algebras, precisely, those without a type $\mathrm{I}_1$ direct summand, the corresponding isomorphisms are linear. The scope of this paper is  to provide a complete description for such isomorphisms.  Our strategy is to first determine the forms of the order isomorphisms between operator intervals in a class of $C^*$-algebras known as $AW^*$-algebras.

A $C^*$-algebra $\mathcal{A}$ is called an $AW^*$-algebra if, for every non-empty subset $\mathcal{F} \subseteq \mathcal{A}$, there exists a projection $p \in \mathcal{A}$ satisfying 
$$
\{a \in \mathcal{A} : ba = 0, \text{ for all } b \in \mathcal{F}\} =\{pa: \ a\in\mathcal{A}\}.
$$
The notion of $AW^*$-algebras originates from von Neumann’s seminal investigations into operator rings, now referred to as von Neumann algebras. In an effort to extract the purely algebraic core of this theory, Kaplansky formulated the concept of $AW^*$-algebras as an abstract generalization; see \cite{Kaplansky}. Every von Neumann algebra is  an $AW^*$-algebra, but the reverse implication does not hold. A classic  example of an $AW^*$-algebra that is not a von Neumann algebra can be constructed by taking the quotient of the algebra of all bounded, complex-valued, Borel-measurable functions on $\mathbb{R}$,  the field of real numbers by the ideal of functions $f$ whose support set $$\{t\in \mathbb{R}: f(t)\neq0 \}$$  is meagre in $\mathbb{R}$; see \cite{Saito-wright}. Although the theory of $AW^*$-algebras continues to present numerous open questions, substantial advances have been achieved in recent years.  For further details on these algebras, the reader is referred to \cite{Berberian, Saito-wright}.

To address the case of effect algebras of $AW^*$-algebras without a type $\mathrm{I}_2$ direct summand, we rely essentially on Dye’s theorem concerning orthoisomorphisms; see \cite[Theorem 4.3]{Hamhalter}. As is usual in preserver problems, algebras of type~$\mathrm{I}_2$ require special treatment; see, e.g., \cite{Our2, El Khatiri, mori}. In this context, we make use of the results in \cite{Our}, which provides a characterization of order isomorphisms between effect algebras of $C^*$-algebras consisting of continuous operator-valued functions on a compact Hausdorff space. In order to obtain a complete description of all order isomorphisms between effect algebras in $AW^*$-algebras, it is necessary to employ larger structures, namely, the rings of measurable operators; see \cite{Berberian, Berberian1, Berberian2}. Our proof for the remaining cases of operator intervals is self-contained and does not rely on a larger framework. Consequently, although our approach shares certain similarities with that of \cite{mori}, the overall argument and techniques employed here are substantially different.

After studying all order isomorphisms between the self-adjoint parts of $AW^*$-algebras, we turn to the general case of $C^*$-algebras. Our approach to describing these isomorphisms involves reducing the problem from arbitrary $C^*$-algebras to the case of monotone complete $C^*$-algebras. We achieve this by extending order isomorphisms between the self-adjoint parts of $C^*$-algebras to order isomorphisms between the self-adjoint parts of their regular monotone completions. Our tools are  Wright's theorem on the Dedekind completion of  Archimedean partially ordered vector spaces with an order unit and Hamana's theorem on the regular monotone completion of $C^*$-algebras; see \cite{Hamana, Wright}.

The rest of the paper is structured as follows. Section \ref{S1}   will be devoted to preliminary results. Section \ref{S2} is devoted to  stating our principal results. It is well known that one of the cornerstones in classifying order isomorphisms between operator intervals is determining the form of order isomorphisms between effect algebras. For this reason, we begin Section \ref{S3} by describing order isomorphisms between the effect algebras of $AW^*$-algebras as the first step in analysing operator intervals in $AW^*$-algebras. In Section \ref{S4}, we provide a complete description of order isomorphisms between the remaining types of operator intervals. Section \ref{S5} reduces Problem \ref{problem1} to the case of monotone complete $C^*$-algebras. Finally, in Section \ref{S6}, we present several remarks illustrating the non-uniqueness in the description of order isomorphisms between effect algebras. The paper concludes with two open questions that suggest promising directions for future research.

\section{Preliminaries}\label{S1}

 Throughout this paper, every $C^*$-algebra $\mathcal{A}$ is assumed to be unital with unit 1, and its norm is denoted by $\| \cdot\|$. The self-adjoint part of $\mathcal{A}$, denoted by $\mathcal{A}^{sa}$, forms a real Banach space and, equipped with the Jordan product $$a \circ b = \tfrac{1}{2}(ab + ba),\qquad a,b\in \mathcal{A},$$ it constitutes a Jordan algebra. The positive cone of $\mathcal{A}$, denoted by $\mathcal{A}^+$, is defined as $\{a^2 : a \in \mathcal{A}^{sa}\}$. This cone induces a natural partial order on the Jordan algebra   $\mathcal{A}^{sa}$. Specifically, for two self-adjoint elements $a, b \in \mathcal{A}^{sa}$, we say that $a \leq b$ if $b - a \in \mathcal{A}^+$. The set of all positive invertible elements in $\mathcal{A}$ is denoted by $\mathcal{A}^{++}$. We write $a<b$ if $b-a\in\mathcal{A}^{++}$. We define the operator interval $[a,b]_{\mathcal{A}} := \{\, c \in \mathcal{A}^{sa} : a \leq c \leq b \,\}$.  
 In particular, the effect algebra of $\mathcal{A}$, denoted by $\mathcal{A}^{1}$, is given by $[0,1]_{\mathcal{A}}$.

  We will use also the following notation. 
 \begin{itemize}
 	\item  $\mathcal{P(A)}:=\{p\in\mathcal{A}:\ p^*=p=p^2\}$ and $p^\perp=1-p$, for all $p\in \mathcal{P(A)}$.
 	\item  For $\mathcal{F}\subseteq\mathcal{A}$, $p\in \mathcal{P(A)}$,  $p\mathcal{F}:=\{pa: a\in \mathcal{F}\}$ and  $\mathcal{F}p:=\{ap: a\in \mathcal{F}\}$.
 	\item For any $n\in \mathbb{N}^*$, $\mathrm{M}_n(\mathcal{A})$ denotes the algebra of all $n\times n$ matrices over $\mathcal{A}$.
 \end{itemize}
We denote  Murray-von Neumann equivalence by the symbol $\sim$. Specifically, for $p,q\in\mathcal{P(A)}$, we write $p \sim q$ when there exists a partial isometry $v\in\mathcal{A}$ such that $vv^* = p$ and $v^*v = q$. Additionally, we write $p \preceq q$ when  $p\sim q'$ for some $q'\in\mathcal{P(A)}$ such that $q'\leq q$.
 
 Let  $\mathcal{I}$ be  subset of $\mathcal{A}$. For a compact Hausdorff space $\mathcal{X}$, denote by $\mathcal{C}(\mathcal{X}, \mathcal{I})$ the set of all continuous $\mathcal{I}$-valued functions on $\mathcal{X}$. For brevity, we shall write $\mathcal{C}(\mathcal{X})$ instead of $\mathcal{C}(\mathcal{X}, \mathbb{C})$. Then $\mathcal{C}\big(\mathcal{X}, [0,1]\big)=\mathcal{C}(\mathcal{X})^{1}$, $\mathcal{C}\big(\mathcal{X}, [0,\infty)\big)=\mathcal{C}(\mathcal{X})^+$ and $\mathcal{C}(\mathcal{X}, \mathbb{R})=\mathcal{C}(\mathcal{X})^{sa}$.
 
  Let $\mathcal{H}$ be a complex Hilbert space, we will  denote by $\mathcal{B}(\mathcal{H})$  the set of all bounded linear operators on $\mathcal{H}$. 
 
 We recall  that a Jordan $^*$-isomorphism  $J: \mathcal{A}\to \mathcal{B}$ between two   $C^*$-algebras  is a linear bijection  that preserves the adjoint and  the Jordan product, i.e., $$J(a^*)=J(a)^* \quad \mbox{and} \quad J(a \circ b)=J(a)\circ J(b),\qquad a,b\in \mathcal{A}.$$
Therefore,  the restriction of any such map    $J$ to a  subset $\mathcal{F}$ of $\mathcal{A}^{sa}$  is an order isomorphism, with respect to the natural orders of $\mathcal{A}$ and $\mathcal{B}$, from  $\mathcal{F}$ onto $J(\mathcal{F})$.

\subsection{Operator intervals in  $C^*$-algebras}
Let $\mathcal{A}$ be a $C^{*}$-algebra.  
A subset $\mathcal{F} \subseteq \mathcal{A}$ is called an operator interval if  
$\mathcal{F} = \mathcal{A}^{sa}$, or there exists an $a\in \mathcal{A}^{sa}$,  
such that $\mathcal{F}$ is one of the sets listed below:
$$\{c\in \mathcal{A}^{sa}:  c\geq a\},$$
$$\{c\in \mathcal{A}^{sa}:  c\leq a\},$$
$$\{c\in \mathcal{A}^{sa}: a<c \},$$
$$\{c\in \mathcal{A}^{sa}:  c<a \},$$
or there exist $a,b \in \mathcal{A}^{sa}$, with $a < b$,  
such that $\mathcal{F}=[a,b]_{\mathcal{A}}$, or $\mathcal{F}$ is one of the sets listed below:
$$\{c\in \mathcal{A}^{sa}: a\leq c< b\},$$
$$\{c\in \mathcal{A}^{sa}: a< c\leq b\},$$
$$\{c\in \mathcal{A}^{sa}: a< c< b\}.$$

Two partially ordered sets  $ (\mathcal{F},\leq_{\mathcal{F}})$   and $(\mathcal{G}, \leq_{\mathcal{G}})$  are said to be order anti-isomorphic if there exists    a bijection  $\Phi: (\mathcal{F},\leq_{\mathcal{F}})\to (\mathcal{G},\leq_{\mathcal{G}})$  that inverts  order in both directions, i.e., $$a \leq_{\mathcal{F}}b\iff \Phi(b)\leq_{\mathcal{G}} \Phi(a), \qquad a,b\in \mathcal{F}.$$
It is well known that the problem of characterizing the general forms of order isomorphisms between operator intervals can be reduced to  special cases. Namely,  using simple  transformations, we can verify  that each of these intervals is either order isomorphic or  anti-isomorphic to one of the following subsets: $$\mathcal{A}^1,  \mathcal{A}^+,  \mathcal{A}^{++} \mbox{ and }  \mathcal{A}^{sa}.$$
We refer the reader to  \cite{Semrl} for a proof, where the corresponding results for the $C^*$-algebras of all bounded operators on  Hilbert space  are proven. The same proof also holds here for a general $C^*$-algebra. Consequently, to obtain the general forms of order isomorphisms between operator intervals, it suffices to consider order isomorphisms between sets of the following the aforementioned forms.
 If $\mathcal{A}$ is commutative, then $\mathcal{A}^{sa}$ and $\mathcal{A}^{++}$ are easily seen to be order isomorphic. However, this is not possible when $\mathcal{A}$ is non-commutative, as will be discussed later. In the absence of commutativity, no further reduction in the number of intervals that must be studied is possible. This follows from the fact that each of these intervals is not order isomorphic to the others.
\subsection{ Regular monotone completion  of   $C^*$-algebras}\label{2.2}

An extension of a $C^*$-algebra $\mathcal{A}$ is a pair $(\mathcal{B}, J)$, where $\mathcal{B}$ is a $C^*$-algebra and $J: \mathcal{A} \hookrightarrow \mathcal{B}$ is a unital $^*$-monomorphism.

On the set of all extensions of $\mathcal{A}$, we can define an  equivalence relation $\sim$ as follows: $(\mathcal{B}, J) \sim (\mathcal{C}, J')$ if there exists a unital $^*$-isomorphism $J'': \mathcal{B} \to \mathcal{C}$ such that $J'' \circ J = J'$.

We will identify $\mathcal{A}$ with its image $J(\mathcal{A}) \subseteq \mathcal{B}$ and abbreviate $(\mathcal{B}, J)$ to $\mathcal{B}$.   An extension $\mathcal{B}$ of $\mathcal{A}$ is called regular if for each element $b \in \mathcal{B}^{sa}$, $b$ is the least upper-bound in $\mathcal{B}^{sa}$ of the set $\{a\in\mathcal{A}^{sa} : a\leq b\}$. For now, we will be content with this definition of regularity; a deeper treatment of this concept will be provided in Section \ref{S3}. 

Let $\mathcal{F}$ be a partially ordered set and $\mathcal{G}$ a subset. We use $\sup_{\mathcal{F}}  \mathcal{G}$ to denote the least upper-bound (supremum) of $\mathcal{G}$ in  $\mathcal{F}$, if it exists. Similarly, $\inf_{\mathcal{F}} \mathcal{G}$ denotes the greatest lower-bound (infimum), if it exists.

A subset $\mathcal{F}$ of $\mathcal{A}^{sa}$ is monotone closed in $\mathcal{A}^{sa}$ if, for any bounded  increasing (resp. decreasing)  net $\mathcal{G}$ in $\mathcal{F}$ with  least upper-bound $\sup_{\mathcal{A}^{sa}}\mathcal{G}$ (resp. greatest
lower-bound  $\inf_{\mathcal{A}^{sa}}\mathcal{G}$),
then $\sup_{\mathcal{A}^{sa}}\mathcal{G}$ 
(resp. $\inf_{\mathcal{A}^{sa}}\mathcal{G}$) is in $\mathcal{F}$. The monotone closure of $\mathcal{F}$ in $\mathcal{A}^{sa}$ is the intersection of all monotone closed subsets of $\mathcal{A}^{sa}$ that contain $\mathcal{F}$.

In \cite[Theorem 3.1]{Hamana}, Hamana proved that any $C^{*}$-algebra $\mathcal{A}$ possesses  a 
regular extension  $\overline{\mathcal{A}}$, where $\overline{\mathcal{A}}$ is a monotone complete $C^*$-algebra satisfying  the following properties:

\begin{enumerate}
	\item[(i)]  $ \overline{\mathcal{A}}^{sa}$  is the  monotone closure of $\mathcal{A}^{sa}$  in $ \overline{\mathcal{A}}^{sa}$. \\
	
	\item[(ii)] For each $a\in  \overline{\mathcal{A}}^{sa}$, we have   $a=\sup_{\overline{\mathcal{A}}^{sa}}\{c\in \mathcal{A}^{sa}: c\leq a\}$. \\
	
	\item[(iii)] If  $\mathcal{F}\subseteq \mathcal{A}^{sa}$ such that $\sup_{\mathcal{A}^{sa}}\mathcal{F}=a$. Then, $a= \sup_{\overline{\mathcal{A}}^{sa}}\mathcal{F}$.
\end{enumerate} 
He proved also that the algebra $\overline{\mathcal{A}}$  is unique up to the equivalence relation $\sim$,  and it is called 
a regular monotone completion of $\mathcal{A}$. In this paper, $\overline{\mathcal{A}}$ denotes a fixed   regular monotone completion of $\mathcal{A}$. While this extension is not unique, all regular monotone completions  of  $\mathcal{A}$ are  mutually $^*$-isomorphic, and in particular, they are  mutually Jordan $^*$-isomorphic. Since these isomorphisms preserve the order structure of $C^*$-algebras, we can choose any one of these extensions without introducing any conflict.

\subsection{Regular ring of  finite $AW^*$-algebras} It is well known that  the classical type theory of von Neumann algebras can  be extended to $AW^*$-algebras; see \cite{Berberian}, \cite{Saito-wright}.  Throughout this paper, in all matters concerning $AW^*$-algebras, we follow the terminology of Berberian \cite{Berberian}. If $\mathcal{A}$ is an $AW^*$-algebra,  for each strictly positive integer $n$ there exists a unique central projection $p_\mathcal{A}(n)$  such that $p_\mathcal{A}(n)\mathcal{A}:=\mathcal{A}_n$ is either $0$ or of type~$\mathrm{I}_n$ and  $p_{\mathcal{A}}(n)^{\perp}\mathcal{A}:=\mathcal{A}_{-n}$ does not have a type~$\mathrm{I}_n$ direct summand. For more information see \cite[Theorems~3, p. 94; 4, p. 116]{Berberian}.
 
Let  $\mathcal{A}$ be an  $AW^*$-algebra, for  any  $a\in \mathcal{A}^{sa}$, there exists a smallest projection $   \mathrm{RP}(a)\in \mathcal{P(A)}$  that satisfies $$  \mathrm{RP}(a)a=a  \mathrm{RP}(a)=a.$$

Let $\mathcal{A}$ be a finite $AW^*$-algebra. An operator with closure is a pair of sequences $(a_n, p_n)$, where $(a_n) \subseteq \mathcal{A}$ and $(p_n)\subseteq\mathcal{P(A)}$ is an increasing sequence  with  $\mathrm{sup}_{\mathcal{P(A)}}\{p_n: n\}=1$, satisfying $a_n p_m = a_m p_m$ and $a_n^* p_m = a_m^* p_m$, for all $m \leq n$. Two operators with closure $(a_n,p_n)$, $(b_n,p'_n)$ are said to be equivalent if there exists a sequence of projections $(q_n)$ with $\mathrm{sup}_{\mathcal{P(A)}}\{q_n: n\}=1$ such that $a_n q_n = b_n q_n$.
The closed operator $[a_n,p_n]$ represents the class of all operators with closure that are equivalent to $(a_n,p_n)$. 

By \cite[Theorem 2.1]{Berberian1}, the set \(\mathcal{R(A)}\) of all closed operators affiliated with $\mathcal{A}$ is an associative algebra over the complex numbers, with involution \(*\), with respect to the operations:
$$[a_n, p_n] + [b_n, q_n] = [a_n + b_n,  \mathrm{inf}_{\mathcal{P(A)}}\{p_n,q_n\}]$$
$$
\lambda [a_n, p_n] = [\lambda a_n, p_n]
$$
$$
[a_n, p_n]^* = [a_n^*, p_n]$$
$$[a_n, p_n][b_n, q_n] = [a_n b_n, p'_n]$$
where $(p'_n)$ is an increasing sequence of projections such that $\sup_{\mathcal{P(A)}}\{p'_n: n\}=1$. The mapping $a \mapsto [a, 1]$ ($a \in \mathcal{A}$) is a \(*\)-isomorphism of $\mathcal{A}$ into $\mathcal{R(A)}$, and $[1, 1]$ is the  unity element of $\mathcal{R(A)}$.

For notational simplicity, we identify each element $a\in\mathcal{A}$ with $[a,1]\in\mathcal{R(A)}$. Note that $\mathcal{R(A)}$ has no more bounded element than $\mathcal{A}$, in the sense that, if $[a_n,p_n]\in \mathcal{R(A)}$ such that the sequence $(\|a_n\|)$ is bounded, then $[a_n,p_n]=[a,1]$ for some $a\in \mathcal{A}$; see \cite[Theorem 5.1]{Berberian1}. On the other hand, $\mathcal{R(A)}$ has more invertible elements. Indeed, if $a \in \mathcal{A}^{sa}$ with $ \mathrm{RP}(a) = 1$, then $a$ is invertible in $\mathcal{R(A)}$ but $a$ is not necessarily invertible in $\mathcal{A}$, for more details; see \cite[Corollary 7.5]{Berberian1}.  We denote the inverse of $a$ in $\mathcal{R(A)}$ by $a^{-1}$.

The following lemma provides an order characterization of elements in $\mathcal{A}^1$ that are invertible in $\mathcal{R(A)}$.
\begin{lem}\label{inverse}
	Let $\mathcal{A}$ be a finite $AW^*$-algebra. Suppose that $a,b\in\mathcal{A}^1$ commute with each other. The element $b-a$ is positive invertible in $\mathcal{R(A)}$ if, and only if,  $a$ is not greater than or equal to any projection in $\mathcal{P(A)}$,  and   for every $c\in\mathcal{A}^1\setminus\{a\}$ with $c\geq a$, there exists a lower-bound $d\in\mathcal{A}^1$ of $\{b,c\}$  such that $d\neq a$.  
\end{lem}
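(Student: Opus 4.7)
The plan is to prove the two implications separately, leveraging the fact that since $a$ and $b$ commute, $b$ commutes with every bounded Borel function of $a$, in particular with the spectral projection $e_{\{1\}}(a)$, and also with every spectral projection of $b - a$.

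For the forward direction, assume $b - a$ is positive invertible in $\mathcal{R(A)}$, so $a \leq b$ and $\mathrm{RP}(b - a) = 1$. For condition (1), suppose $a \geq p$ for some non-zero projection $p$; then $0 \leq 1 - a \leq p^\perp$ gives $p(1-a)p \leq p p^\perp p = 0$, hence $(1-a)p = 0$, i.e.\ $ap = p$. Thus $p \leq e := e_{\{1\}}(a)$ and $e \neq 0$. Since $e$ commutes with $b \leq 1$, we get $be \leq e$, so $(b-a)e = be - e \leq 0$; but $(b-a) \geq 0$ commutes with $e$, so $(b-a)e \geq 0$, whence $(b-a)e = 0$, and invertibility of $b - a$ in $\mathcal{R(A)}$ forces $e = 0$, a contradiction. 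For condition (2), fix $c \in \mathcal{A}^1 \setminus \{a\}$ with $c \geq a$, set $x := c - a$ and $y := b - a$, and reduce to producing $z \in \mathcal{A}^+$ with $z \neq 0$, $z \leq x$ and $z \leq y$. To construct $z$, introduce the spectral projections $p_n := e_{[1/n, \infty)}(y)$ and $q_m := e_{[1/m, \infty)}(x)$: then $p_n \uparrow 1$ since $\mathrm{RP}(y) = 1$, $q_m \uparrow \mathrm{RP}(x) \neq 0$, and by commutativity $y \geq (1/n) p_n$ while $x \geq (1/m) q_m$. Since $\mathcal{A}$ is a \emph{finite} $AW^*$-algebra, Kaplansky's theorem that its projection lattice is a continuous geometry yields $p_n \wedge q_m \uparrow_n q_m$; pick $m$ with $q_m \neq 0$ and then $n$ with $r := p_n \wedge q_m \neq 0$, and set $z := \min(1/n, 1/m) \cdot r$. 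Then $d := a + z$ lies in $\mathcal{A}^1$ (since $z \leq y \leq 1 - a$) and satisfies $a \leq d \leq b$, $d \leq c$, $d \neq a$.

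For the converse, assume (1) and (2). Condition (1) forces $a \neq 1$, so $c = 1$ is admissible in (2) and produces $d \in \mathcal{A}^1$ with $a \leq d \leq b$, hence $a \leq b$. Suppose for contradiction $p := \mathrm{RP}(b - a) \neq 1$, so $p^\perp \neq 0$ and $p$ commutes with both $a$ and $b$. Define
$$
c := ap + p^\perp.
$$
Then $c \geq 0$, $1 - c = (1-a)p \geq 0$, and $c - a = (1-a)p^\perp \geq 0$, so $c \in \mathcal{A}^1$ with $c \geq a$. Moreover $c = a$ would force $(1-a)p^\perp = 0$, i.e.\ $p^\perp \leq e_{\{1\}}(a)$; but condition (1) is equivalent to $e_{\{1\}}(a) = 0$, contradicting $p^\perp \neq 0$. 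Hence $c \neq a$, and (2) yields $d \in \mathcal{A}^1$ with $a \leq d \leq b$, $d \leq c$, $d \neq a$. Put $s := d - a \neq 0$. From $s \leq b - a$ and $(b - a) p^\perp = 0$ one obtains $p^\perp s p^\perp \leq 0$, hence $s p^\perp = 0$ by positivity of $s$; from $s \leq (1 - a) p^\perp$ and $p(1 - a) p^\perp = 0$ one obtains $p s p \leq 0$, hence $s p = 0$. Therefore $s = s (p + p^\perp) = 0$, a contradiction.

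The main obstacle is the common-lower-bound step in the forward direction: it genuinely uses the finiteness of $\mathcal{A}$, through the continuous-geometry (infinite distributivity) of its projection lattice, and fails in infinite $AW^*$-algebras such as $\mathcal{B}(\ell^2)$, where one can exhibit positive $x, y$ with $\mathrm{RP}(y) = 1$, $x \neq 0$, and no non-zero common lower bound.
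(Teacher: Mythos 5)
Your proposal is correct. Its converse direction (conditions $\Rightarrow$ invertibility) coincides with the paper's: you use the very same test element $c=a\,\mathrm{RP}(b-a)+\mathrm{RP}(b-a)^{\perp}$, and you usefully spell out the verification that $\{b-a,c-a\}$ admits no non-zero lower bound, which the paper compresses into ``we easily see''. The genuine divergence is in the forward direction, at the construction of a non-zero common lower bound of $x=c-a$ and $y=b-a$. The paper works inside the regular ring $\mathcal{R(A)}$: it forms $(b-a)^{-1/2}(c-a)(b-a)^{-1/2}$ as a closed operator, truncates it at $1$ to obtain an element of $\mathcal{A}^{1}$, and conjugates back by $(b-a)^{1/2}$. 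You instead take spectral projections $p_n=e_{[1/n,\infty)}(y)\uparrow 1$ and $q_m=e_{[1/m,\infty)}(x)\uparrow\mathrm{RP}(x)\neq 0$ and invoke the continuity of the projection lattice of a finite $AW^*$-algebra to get $p_n\wedge q_m\neq 0$ for suitable $n,m$, so that $\min(1/n,1/m)(p_n\wedge q_m)$ serves as the lower bound. Both routes ultimately rest on Berberian's structure theory for finite $AW^*$-algebras (the continuous-geometry, or dimension-function, property you cite is itself obtained from the existence of $\mathcal{R(A)}$), so neither is strictly more elementary; but yours avoids manipulating unbounded elements and pinpoints exactly where finiteness is used, as your closing remark about $\mathcal{B}(\ell^2)$ makes clear. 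Note finally that you read condition (2) as requiring the lower bound $d$ to satisfy $d\geq a$ (otherwise $d=0$ renders the condition vacuous whenever $a\neq 0$); this is exactly the reformulation the paper adopts in the first sentence of its proof, and you additionally handle condition (1) in both directions, which the paper leaves implicit.
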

\begin{proof}
	First, we observe that the condition  for every $c\in\mathcal{A}^1\setminus\{a\}$ with $c\geq a$, there exists a lower-bound $d\in\mathcal{A}^1$ of $\{b,c\}$  such that $d\neq a$ is equivalent to the condition that     for every $c\in\mathcal{A}^1\setminus\{a\}$ with $c\geq a$, the set  $\{b-a,c-a\}$ has a non-zero lower-bound in $\mathcal{A}^1$.

	If $ \mathrm{RP}(b-a)=1$, $b-a$ is invertible in $\mathcal{R(A)}$. Otherwise, $p:= \mathrm{RP}(b-a)^\perp\in\{b-a\}''$  is a  non-zero projection; see \cite[Corollary 1, p. 17]{Berberian}. Since $a$ and $b$ commute with each other, we derive that $\{a,b\}\subseteq\{a,b\}'$, hence $\{a,b\}''\subseteq\{a,b\}'$, which implies that $\{a,b\}''$ is commutative. On the other hand $\{a,b\}'\subseteq\{b-a\}'$,  thus $\{b-a\}''\subseteq\{a,b\}''$, which implies that $p$ commutes with $a$ and $b$. So, $$\mathrm{inf}_{\mathcal{A}^1}\{a,p\}=\mathrm{inf}_{\mathcal{A}^1}\{b,p\}=ap=bp.$$
	Denote $c=ap^\perp+p$. Thus $c\neq a $  and $c\geq a$. We easily see that $$\mathrm{inf}_{\mathcal{A}^1}\{b-a,c-a\}=0.$$  
	This proves the if assertion.
	
	Conversely,  let $c\in\mathcal{A}^1\setminus\{a\}$ with $c\geq a$. We can write $(b-a)^{-1/2}(c-a)(b-a)^{-1/2}=[d'_n,p_n]$ with $(d'_n,p_n)$  lying in the cone of a commutative $AW^*$-subalgebra $\mathcal{C(X)}$ of $\mathcal{A}$, and  $(d'_n)$ is non-zero  increasing sequence; see \cite[Theorem 4.2]{Berberian1}. Put $d_n=\mathrm{inf}_{\mathcal{C(X},\mathbb{R})}\{1,d_n'\}$,  since $(\|d_n\|)$ is a non-zero bounded sequence, there exists a non-zero element $d\in\mathcal{A}^+$ such that $d=[d_n,p_n]$. So $d \leq 1$ and $d\leq(b-a)^{-1/2}(c-a)(b-a)^{-1/2}$. Hence, $c_0:=(b-a)^{1/2}d(b-a)^{1/2}$ is a non-zero lower-bound of $\{b-a,c-a\}$  in $\mathcal{A}^1$.	
\end{proof}

\section{The statement of main results}\label{S2}

The following theorem summarizes some of the principal results of this paper, theorems \ref{Jordan}, \ref{A+}, \ref{A++}, \ref{A^sa}, \ref{++non=sa}, which collectively provide a characterization of order isomorphisms between operator intervals in $AW^*$-algebras without a type~$\mathrm{I}_1$ direct summand.

\begin{thm}\label{intervals}
	Let $\mathcal{A}$ and $\mathcal{B}$ be two $AW^*$-algebras such that  $\mathcal{A}$ does not have a type~$\mathrm{I}_1$ direct summand, then  the following statements hold.
	\begin{enumerate}
		\item  Let $\Phi: \mathcal{A}^1\to\mathcal{B}^1$  be an order isomorphism with $\Phi(1/2)=1/2$. Then, $\Phi$ extends uniquely  to a Jordan $^*$-isomorphism from $\mathcal{A}$ onto $\mathcal{B}$. 	
		
		\item  Let $\Phi: \mathcal{A}^+\to\mathcal{B}^+$  be an order isomorphism. Then there exist an element $b\in\mathcal{B}^{++}$ and a Jordan $^*$-isomorphism $J:\mathcal{A}\to\mathcal{B}$ such that $$\Phi(a)=bJ(a)b, \qquad a\in \mathcal{A}^+.$$ 
		
		\item  Let $\Phi: \mathcal{A}^{++}\to\mathcal{B}^{++}$  be an order isomorphism. Then there exist an  element $b\in\mathcal{B}^{++}$ and a Jordan $^*$-isomorphism $J:\mathcal{A}\to\mathcal{B}$ such that $$\Phi(a)=bJ(a)b,\qquad  a\in \mathcal{A}^{++}.$$

	\item Let $\Phi: \mathcal{A}^{sa}\to\mathcal{B}^{sa}$  be an order isomorphism. Then there exist an element $b\in\mathcal{B}^{++}$, an element $c\in\mathcal{B}^{sa}$  and a Jordan $^*$-isomorphism $J:\mathcal{A}\to\mathcal{B}$  such that $$\Phi(a)=bJ(a)b+c,\qquad \ a\in \mathcal{A}^{sa}.$$
	
\item The operator intervals $\mathcal{A}^{sa}$  and $\mathcal{B}^{++}$  are not order isomorphic.
	\end{enumerate}
\end{thm}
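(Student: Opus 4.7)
My plan is to argue by contradiction: I will assume $\Phi : \mathcal{A}^{sa} \to \mathcal{B}^{++}$ is an order isomorphism and derive a contradiction by transporting the one-parameter group of scalar dilations $\sigma_\lambda(a) := \lambda a$ (for $\lambda > 0$, $\lambda \neq 1$) on $\mathcal{A}^{sa}$ to $\mathcal{B}^{++}$ and invoking Theorem~\ref{A++}. The key observation is that $\sigma_\lambda$ is an order automorphism of $\mathcal{A}^{sa}$ whose unique fixed point is $0$, whereas any Jordan $^*$-isomorphism of $\mathcal{B}$ is $\mathbb{R}$-linear and fixes $1$, so it must fix \emph{every} real scalar multiple of the unit and therefore cannot have only $1$ as fixed point in $\mathcal{B}^{++}$.

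Concretely, I would set $b_0 := \Phi(0) \in \mathcal{B}^{++}$ and introduce $\tau(b) := b_0^{-1/2} b b_0^{-1/2}$, which is an order automorphism of $\mathcal{B}^{++}$ sending $b_0$ to $1$. The composite
\[
\hat{\sigma}_\lambda \;:=\; \tau \circ \Phi \circ \sigma_\lambda \circ \Phi^{-1} \circ \tau^{-1}
\]
is then an order automorphism of $\mathcal{B}^{++}$ whose unique fixed point is $1$, as a direct chase of the equivalences confirms. Assuming $\mathcal{B}$ also has no type $\mathrm{I}_1$ direct summand, Theorem~\ref{A++} yields $\hat{\sigma}_\lambda(b) = c_\lambda J_\lambda(b) c_\lambda$ for some $c_\lambda \in \mathcal{B}^{++}$ and some Jordan $^*$-isomorphism $J_\lambda$ of $\mathcal{B}$. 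Evaluating at $b = 1$ forces $c_\lambda^2 = 1$ and hence $c_\lambda = 1$, so $\hat{\sigma}_\lambda$ coincides with $J_\lambda$ on $\mathcal{B}^{++}$. But then $\hat{\sigma}_\lambda(2 \cdot 1) = J_\lambda(2 \cdot 1) = 2 \cdot 1 \neq 1$, contradicting the uniqueness of the fixed point.

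The main obstacle will be eliminating the possibility that $\mathcal{B}$ has a non-trivial type $\mathrm{I}_1$ direct summand, since Theorem~\ref{A++} does not apply to $\mathcal{B}^{++}$ in that situation. The sub-case where $\mathcal{B}$ is commutative is easy: $\mathcal{B}^{++}$ is then a lattice, whereas $\mathcal{A}^{sa}$ is not, because $\mathcal{A}$ has no type $\mathrm{I}_1$ summand and is therefore non-commutative, so no order isomorphism can exist. For the genuinely mixed case my plan is to extract from $\Phi$ an order isomorphism $\mathcal{A}^1 \cong \mathcal{B}^1$ (by restricting $\Phi$ to $[0,1]_\mathcal{A}$ and rescaling affinely in $\mathcal{B}$), then—after adjusting by an order automorphism of $\mathcal{B}^1$ that realigns the image of $1/2$ back to $1/2$—apply Theorem~\ref{Jordan} to extract a Jordan $^*$-isomorphism $\mathcal{A} \cong \mathcal{B}$. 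This transfers the absence of a type $\mathrm{I}_1$ summand from $\mathcal{A}$ to $\mathcal{B}$ and reduces the argument to the main case treated above.
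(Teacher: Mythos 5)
Your proposal addresses only item (5); items (1)--(4) are verbatim the statements of Theorems \ref{Jordan}, \ref{A+}, \ref{A++} and \ref{A^sa}, and you use (1) and (3) as black boxes, so I will read the proposal as a proof of (5) granting the rest. For that item your central device --- conjugating the dilation $\sigma_\lambda$ through $\Phi$ and $\tau$ to obtain an order automorphism of $\mathcal{B}^{++}$ whose unique fixed point is $1$, then using Theorem \ref{A++} together with the unitality and linearity of Jordan $^*$-isomorphisms to exhibit $2\cdot 1$ as a second fixed point --- is correct and genuinely different from the paper's. The paper (Theorem \ref{++non=sa}) instead applies Theorem \ref{A+} to the maps $a\mapsto\Phi(a-n)-\Phi(-n)$, shows the resulting $b_n$ does not depend on $n$, and derives a contradiction from $\Phi(0)=nb^2+\Phi(-n)$ combined with $0\le\Phi(-n)\le\Phi(0)$, which forces the unbounded sequence $n\|b\|^2$ to stay bounded. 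Your fixed-point ending is arguably cleaner, \emph{once} the hypotheses of Theorem \ref{A++} for $\mathcal{B}$ are secured.

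The genuine gap is precisely in securing them, i.e., in transferring the absence of a type $\mathrm{I}_1$ summand from $\mathcal{A}$ to $\mathcal{B}$. Restricting $\Phi$ to $[0,1]_{\mathcal{A}}$ and ``rescaling affinely'' onto $\mathcal{B}^1$ presupposes that $\Phi(1)-\Phi(0)$ is invertible; an order isomorphism only gives $\Phi(1)\ge\Phi(0)$ with $\Phi(1)\ne\Phi(0)$, so the conjugation by $(\Phi(1)-\Phi(0))^{-1/2}$ need not exist. Likewise, the subsequent ``realignment'' of the image of $1/2$ back to $1/2$ by an order automorphism of $\mathcal{B}^1$ requires (cf.\ Theorem \ref{general_formula}) that this image and its complement be invertible, which is again not automatic. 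Both steps fail as stated. The repair is cheap: since $\Phi(0)\in\mathcal{B}^{++}$, every $b\ge\Phi(0)$ is automatically positive invertible, so $\Phi(\mathcal{A}^+)=\{b\in\mathcal{B}^{sa}:b\ge\Phi(0)\}=\Phi(0)+\mathcal{B}^+$, and $a\mapsto\Phi(a)-\Phi(0)$ is an order isomorphism of $\mathcal{A}^+$ onto $\mathcal{B}^+$; Theorem \ref{A+}, whose type hypothesis concerns only $\mathcal{A}$, then yields a Jordan $^*$-isomorphism $J:\mathcal{A}\to\mathcal{B}$, which transfers the type decomposition to $\mathcal{B}$. (Alternatively, Theorem \ref{I_1} applied to $\Phi$ gives a bijection from $p_{\mathcal{A}}(1)\mathcal{A}^{sa}=\{0\}$ onto $p_{\mathcal{B}}(1)\mathcal{B}^{++}$, forcing $p_{\mathcal{B}}(1)=0$; this is the paper's route.) Finally, your commutative sub-case silently invokes the classical but non-trivial fact that the self-adjoint part of a non-commutative $C^*$-algebra is not a lattice; this is true but should be cited, and it becomes redundant once the reduction above is repaired.
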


\begin{rem}
	For effect algebras,  we actually  provide  more general descriptions in Theorem \ref{general_formula}.  For  finite $AW^*$-algebras, we provide a complete description  using Jordan $^*$-isomorphisms and  closed operators of the regular rings in the sense of Berberian  \cite{Berberian1}.	
\end{rem}

While order isomorphisms of operator intervals in the commutative setting (the $\mathrm{I}_1$ case) have been extensively investigated; see, e.g., \cite{Lochan, Marovt}, the existing results do not provide a comprehensive treatment for all commutative $C^*$-algebras nor for all types of operator intervals. In the following proposition, we address this gap by examining each type of operator interval within general commutative $C^*$-algebras.

\begin{prop}\label{comm_case}
	Let $\mathcal{X}$ and $\mathcal{Y}$ be two Hausdorff compact spaces and $\mathcal{I}\in\{[0,1], [0,\infty), \mathbb{R}  \}$.	Suppose that $\Phi:\mathcal{C}(\mathcal{X},\mathcal{I})\to \mathcal{C}(\mathcal{Y},\mathcal{I})$  is  an order isomorphism.  Then there exist a homeomorphism $\mu:\mathcal{Y}\to\mathcal{X}$, a  dense $G_\delta$-subset $\mathcal{Y}_{0}$ of $\mathcal{Y}$ and a family of increasing homeomorphisms $(f_y)_{y\in\mathcal{Y}_{0}}$ of $\mathcal{I}$ such that 
	\begin{equation}\label{comm-form}
		\Phi(f)(y)=f_y\big( f(\mu(y))\big),\qquad f\in \mathcal{C}(\mathcal{X},\mathcal{I}), y\in\mathcal{Y}_{0}.	
	\end{equation}
\end{prop}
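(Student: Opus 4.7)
My plan is to split the proof according to the choice of $\mathcal{I}$. For $\mathcal{I}=\mathbb{R}$, the proposition is precisely Cabello S\'anchez's classification of order isomorphisms between the self-adjoint parts of commutative $C^{*}$-algebras \cite{sanchez}, and for $\mathcal{I}=[0,1]$ it is a direct reformulation of Ehsani's description of order isomorphisms between their effect algebras \cite{Ehsani} (equivalently the one-dimensional case of \cite{Our}); both descriptions already take exactly the form (\ref{comm-form}). It therefore only remains to treat $\mathcal{I}=[0,\infty)$, which I plan to reduce to the $[0,1]$ case via a change of variable.

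For the reduction, I would use the increasing homeomorphism $h:[0,\infty)\to[0,1)$ defined by $h(t)=t/(1+t)$. Post-composition with $h$ induces an order isomorphism $\tilde h_{\mathcal{X}}:\mathcal{C}(\mathcal{X},[0,\infty))\to\mathcal{C}(\mathcal{X},[0,1))$, $f\mapsto h\circ f$, and analogously $\tilde h_{\mathcal{Y}}$. Conjugating yields an order isomorphism $\Psi:=\tilde h_{\mathcal{Y}}\circ\Phi\circ\tilde h_{\mathcal{X}}^{-1}$ between $\mathcal{C}(\mathcal{X},[0,1))$ and $\mathcal{C}(\mathcal{Y},[0,1))$. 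Since $\mathcal{X}$ is compact, $\mathcal{C}(\mathcal{X},[0,1))=\{f\in\mathcal{C}(\mathcal{X},[0,1]):\|f\|_{\infty}<1\}$, and every $f\in\mathcal{C}(\mathcal{X},[0,1])$ is the order-supremum of the increasing sequence $((1-1/n)f)_{n\geq1}$, which lies in $\mathcal{C}(\mathcal{X},[0,1))$.

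This motivates extending $\Psi$ to $\bar\Psi:\mathcal{C}(\mathcal{X},[0,1])\to\mathcal{C}(\mathcal{Y},[0,1])$ via
$$\bar\Psi(f):=\sup\{\Psi(g):g\in\mathcal{C}(\mathcal{X},[0,1)),\,g\leq f\},$$
with the symmetric construction applied to $\Psi^{-1}$ providing its inverse. Once $\bar\Psi$ is a well-defined order isomorphism, the $[0,1]$ case applied to it furnishes a homeomorphism $\mu:\mathcal{Y}\to\mathcal{X}$, a dense $G_{\delta}$-subset $\mathcal{Y}_0\subseteq\mathcal{Y}$, and increasing homeomorphisms $\bar\psi_y:[0,1]\to[0,1]$ such that $\bar\Psi(f)(y)=\bar\psi_y(f(\mu(y)))$ for $y\in\mathcal{Y}_0$. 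Restricting back to $\mathcal{C}(\mathcal{X},[0,1))$ forces $\bar\psi_y([0,1))=[0,1)$, and setting $f_y:=h^{-1}\circ\bar\psi_y\circ h$ produces the required increasing homeomorphism of $[0,\infty)$ realising (\ref{comm-form}) for $\Phi$ on $\mathcal{Y}_0$.

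The main obstacle is the extension step itself: the displayed supremum need not be continuous on $\mathcal{Y}$ a priori, and one must check both that $\bar\Psi$ takes values in $\mathcal{C}(\mathcal{Y},[0,1])$ and that it is a genuine order-bijection onto that set. The natural tool is the fact that every order isomorphism between partially ordered sets automatically preserves all suprema that exist in the source, combined with a Dini-type compactness argument forcing the relevant pointwise limits to be continuous; a symmetric treatment of $\Psi^{-1}$ then delivers the inverse of $\bar\Psi$ and closes the reduction to the $[0,1]$ case.
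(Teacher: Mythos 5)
Your treatment of the cases $\mathcal{I}=\mathbb{R}$ and $\mathcal{I}=[0,1]$ by citation agrees with the paper, which likewise settles those cases by invoking \cite{sanchez} and \cite{Ehsani} and concentrates on the cone. However, your reduction of $[0,\infty)$ to the effect-algebra case has a genuine gap at the extension step, and the tools you name do not close it. The poset $\mathcal{C}(\mathcal{Y},[0,1])$ is not Dedekind complete unless $\mathcal{Y}$ is Stonean, so the supremum $\sup\{\Psi(g):g\in\mathcal{C}(\mathcal{X},[0,1)),\,g\leq f\}$ need not exist in $\mathcal{C}(\mathcal{Y},[0,1])$ at all: the pointwise supremum is merely lower semicontinuous, and nothing forces it to be continuous or to admit a continuous least upper bound. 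The principle that order isomorphisms preserve suprema is of no help here, because for $f$ attaining the value $1$ the set $\{g\leq f\}$ has no supremum inside the domain $\mathcal{C}(\mathcal{X},[0,1))$ on which $\Psi$ is defined; and a Dini-type argument presupposes continuity of the monotone limit rather than establishing it. Note also that $\mathcal{C}(\mathcal{X},[0,1))$ is order isomorphic to the cone $\mathcal{C}(\mathcal{X},[0,\infty))$ and not to $\mathcal{C}(\mathcal{X},[0,1])$, so the passage from the half-open to the closed interval is precisely the nontrivial content and cannot be taken for granted.

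The paper sidesteps this entirely by reducing the cone case to the case $\mathcal{I}=\mathbb{R}$ through an explicit algebraic extension rather than a supremum: every $f\in\mathcal{C}(\mathcal{X},\mathbb{R})$ decomposes uniquely as $f=f^{+}-f^{-}$ with $f^{+}f^{-}=0$, and one sets $\Psi(f)=\Phi(f^{+})-\Phi(f^{-})$. Well-definedness and bijectivity follow because $f^{+}f^{-}=0$ is equivalent to $\inf\{f^{+},f^{-}\}=0$, a condition preserved by $\Phi$, and monotonicity in both directions follows from the equivalence of $f\leq g$ with $f^{+}\leq g^{+}$ and $f^{-}\geq g^{-}$. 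Cabello S\'anchez's theorem then applies to $\Psi$ and restricts to the desired formula for $\Phi$ on the cone. If you wish to keep your change-of-variable route, you must actually prove the existence and continuity of the suprema defining $\bar\Psi$ and that the resulting map is an order bijection of $\mathcal{C}(\mathcal{X},[0,1])$ onto $\mathcal{C}(\mathcal{Y},[0,1])$; as written, that is the missing step.
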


The following theorem demonstrates that the study of order isomorphisms between operator intervals in $AW^*$-algebras should be conducted separately for two distinct classes of such algebras: the commutative ones and those without a commutative direct summand. 

\begin{thm}\label{I_1}
	Let $\mathcal{A}$ and  $\mathcal{B}$ be two  $AW^*$-algebras. Suppose that $\mathcal{I}\subseteq\mathcal{A}$ and $\mathcal{J}\subseteq\mathcal{B}$ be two operator intervals. Let $\Phi: \mathcal{I}\to \mathcal{J}$ be an order isomorphism. Then    there exists a unique  pair of   order isomorphisms $\Phi_1:p_\mathcal{A}(1)\mathcal{I}\to p_\mathcal{B}(1)\mathcal{J}$ and  $\Phi_{-1}:p_\mathcal{A}(1)^\perp \mathcal{I}\to p_\mathcal{B}(1)^\perp\mathcal{J}$  such that  $$\Phi(a)=\Phi_1\left( p_\mathcal{A}(1)a\right) +\Phi_{-1}\left( p_\mathcal{A}(1)^{\perp}a\right) , \qquad a\in \mathcal{I}.$$ 
\end{thm}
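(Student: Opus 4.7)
My goal is to show that $\Phi$ must respect the order-direct-sum decomposition induced by the type~$\mathrm{I}_1$ central projections $p_\mathcal{A}(1)$ and $p_\mathcal{B}(1)$. Since $p_\mathcal{A}(1)$ is central, the interval $\mathcal{I}$ canonically splits as $\mathcal{I}_1 \oplus \mathcal{I}_{-1}$, where $\mathcal{I}_1 := p_\mathcal{A}(1)\mathcal{I}$ and $\mathcal{I}_{-1} := p_\mathcal{A}(1)^\perp\mathcal{I}$ are equipped with the coordinatewise order inherited from $\mathcal{I}$; the analogous splitting holds on the $\mathcal{B}$ side. By the reductions recalled in Section \ref{S1}, I may assume $\mathcal{I}$ is one of the canonical forms $\mathcal{A}^1, \mathcal{A}^+, \mathcal{A}^{++}, \mathcal{A}^{sa}$.

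The heart of the argument is an intrinsic order-theoretic characterization of $\mathcal{I}_1$: I would show that $\mathcal{I}_1$ is the unique maximal order-direct-summand of $\mathcal{I}$ that is a lattice under the induced order. The forward inclusion is immediate, since $p_\mathcal{A}(1)\mathcal{A}$ is commutative and hence its operator intervals are vector lattices. For the converse, I would first identify every order-direct-sum decomposition of $\mathcal{I}$ as $q\mathcal{I} \oplus q^\perp\mathcal{I}$ for some central projection $q \in \mathcal{A}$---a correspondence that must be verified separately on each canonical interval---and then invoke the classical theorem of Sherman that a $C^*$-algebra is commutative if and only if its self-adjoint part is a lattice: if $q\mathcal{I}$ is a lattice, then so is $q\mathcal{A}^{sa}$, forcing $q\mathcal{A}$ to be commutative, hence $q \leq p_\mathcal{A}(1)$.

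Granting this characterization, the conclusion follows rapidly. Since $\Phi$ is an order isomorphism, it preserves both order-direct-summands and the lattice property, so it must map $\mathcal{I}_1$ onto the maximal lattice summand $p_\mathcal{B}(1)\mathcal{J}$ of $\mathcal{J}$ and $\mathcal{I}_{-1}$ onto $p_\mathcal{B}(1)^\perp\mathcal{J}$. Setting $\Phi_1 := \Phi|_{\mathcal{I}_1}$ and $\Phi_{-1} := \Phi|_{\mathcal{I}_{-1}}$, the coordinatewise identification of $\mathcal{I}$ with $\mathcal{I}_1 \times \mathcal{I}_{-1}$ then forces the formula $\Phi(a) = \Phi_1(p_\mathcal{A}(1)a) + \Phi_{-1}(p_\mathcal{A}(1)^\perp a)$ for every $a \in \mathcal{I}$; uniqueness of $(\Phi_1,\Phi_{-1})$ is automatic from the formula.

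The principal technical obstacle is establishing that every order-direct-sum decomposition of $\mathcal{I}$ actually arises from a central projection of $\mathcal{A}$. For the effect algebra $\mathcal{A}^1$ this is essentially standard, but for the unbounded intervals $\mathcal{A}^+, \mathcal{A}^{++}$, and $\mathcal{A}^{sa}$ one must produce the candidate central projection from purely order-theoretic data, likely by localizing to suitable bounded sub-intervals and reducing to the $\mathcal{A}^1$ case; the absence of a minimum in $\mathcal{A}^{++}$ makes that case particularly delicate.
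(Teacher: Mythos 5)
Your plan hinges on a claim that is never established and that carries essentially the full weight of the theorem: that every order-direct-product decomposition of an operator interval $\mathcal{I}$ arises from a central projection, and that the maximal lattice factor is unique not just as an abstract factor but as a concrete congruence on $\mathcal{I}$, so that $\Phi$ is forced to intertwine the two central decompositions. You flag this as ``the principal technical obstacle'' and defer it, but without it the argument does not get off the ground. Note also that invoking ``preservation of order-direct-summands'' needs a refinement/uniqueness statement for poset product decompositions (the intervals $\mathcal{A}^{sa}$, $\mathcal{A}^{++}$ have no distinguished basepoint, so a ``summand'' is not canonically a subset), and that Sherman's theorem must first be transported from $q\mathcal{A}^{sa}$ to each type of interval. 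The paper sidesteps all of this: on effect algebras it uses only that an order isomorphism preserves projections together with Kaplansky's purely lattice-theoretic characterization of central projections (unique complement), which pins down $\Phi(p_\mathcal{A}(1))=p_\mathcal{B}(1)$ directly; it then transfers the splitting to arbitrary intervals by sandwiching any two elements in an order interval $[a_{-1},a_1]_{\mathcal{A}}$, conjugating by the explicit congruences $\Gamma,\Theta$ to reduce to the effect-algebra case, and checking independence of the choice of $a_{-1},a_1$. No classification of product decompositions is needed.

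There is also a concrete misstep in your final paragraph: setting $\Phi_1:=\Phi\arrowvert_{\mathcal{I}_1}$ is not meaningful in general. For $\mathcal{I}=\mathcal{A}^{++}$ (and for the strict intervals) the set $p_\mathcal{A}(1)\mathcal{I}$ is not even contained in $\mathcal{I}$, since $p_\mathcal{A}(1)a$ is not invertible in $\mathcal{A}$; and for $\mathcal{I}=\mathcal{A}^{sa}$ there is no reason for $\Phi$ to carry the subset $p_\mathcal{A}(1)\mathcal{A}^{sa}$ into $p_\mathcal{B}(1)\mathcal{B}^{sa}$ (in the paper's formula one has $\Phi(p_\mathcal{A}(1)a)=\Phi_1(p_\mathcal{A}(1)a)+\Phi_{-1}(0)$ with $\Phi_{-1}(0)$ possibly nonzero). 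The map $\Phi_1$ has to be \emph{defined} by $\Phi_1(p_\mathcal{A}(1)a):=p_\mathcal{B}(1)\Phi(a)$, and the substance of the proof is precisely to show this is well defined, i.e.\ that $p_\mathcal{B}(1)\Phi(a)$ depends only on $p_\mathcal{A}(1)a$. That well-definedness is what the paper's localization argument (equation \eqref{independence}) delivers and what your outline leaves open.
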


\begin{rem}
	We conclude that the preceding results together provide a complete characterization of all order isomorphisms between operator intervals in $AW^*$-algebras, except for the effect algebras.
\end{rem}

The following theorem shows that the problem of characterizing order isomorphisms between the self-adjoint parts of general $C^*$-algebras can be reduced to the case of monotone complete $C^*$-algebras, and subsequently, to the case of $AW^*$-algebras.

\begin{thm}\label{extension}
Let $\mathcal{A}$ and  $\mathcal{B}$ be two  $C^*$-algebras. Suppose that $\Phi: \mathcal{A}^{sa}\to \mathcal{B}^{sa}$ is an order isomorphism. Then  there exists  a  unique  order isomorphism  $\overline{\Phi}:{\overline{\mathcal{A}}}^{sa}\to \overline{\mathcal{B}}^{sa}$ extends $\Phi$.  
\end{thm}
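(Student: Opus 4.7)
The plan is to define the extension by the Dedekind-cut formula
$$\overline{\Phi}(a) := \sup_{\overline{\mathcal{B}}^{sa}} \{\Phi(c) : c \in \mathcal{A}^{sa},\ c \leq a\}, \qquad a \in \overline{\mathcal{A}}^{sa}.$$
Writing $D_a := \{c \in \mathcal{A}^{sa} : c \leq a\}$, this formula is forced by the uniqueness requirement: any order-isomorphic extension $\Psi$ of $\Phi$ must satisfy $\Psi(a) = \Psi(\sup_{\overline{\mathcal{A}}^{sa}} D_a) = \sup_{\overline{\mathcal{B}}^{sa}} \Psi(D_a) = \sup_{\overline{\mathcal{B}}^{sa}} \Phi(D_a)$, where we use property~(ii) of $\overline{\mathcal{A}}$ to write $a = \sup D_a$, together with the fact that order isomorphisms preserve existing suprema. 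So uniqueness comes for free once the formula is shown to make sense.

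The central obstacle is proving that this supremum actually exists in $\overline{\mathcal{B}}^{sa}$. Boundedness is easy: applying property~(ii) to $-a$ yields $a = \inf_{\overline{\mathcal{A}}^{sa}} U_a$ with $U_a := \{d \in \mathcal{A}^{sa} : d \geq a\}$, so $U_a$ is non-empty and each $\Phi(d)$ with $d \in U_a$ bounds $\Phi(D_a)$ above. However, in a noncommutative monotone complete $C^*$-algebra, arbitrary bounded subsets of the self-adjoint part need not have least upper bounds---only bounded monotone nets do. I would surmount this by invoking Wright's theorem: the Dedekind completion $\mathcal{A}^{sa\#}$ of $\mathcal{A}^{sa}$, viewed as an Archimedean partially ordered vector space with order unit, is a Dedekind complete vector lattice into which $\mathcal{A}^{sa}$ embeds order-densely; similarly for $\mathcal{B}$. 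A standard Dedekind-cut argument then extends $\Phi$ to an order isomorphism $\Phi^{\#}\colon \mathcal{A}^{sa\#}\to \mathcal{B}^{sa\#}$, whence $\sup_{\mathcal{B}^{sa\#}}\Phi(D_a)$ exists.

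It then remains to identify $\overline{\mathcal{A}}^{sa}$ and $\overline{\mathcal{B}}^{sa}$ as canonical subsets of $\mathcal{A}^{sa\#}$ and $\mathcal{B}^{sa\#}$, and to show $\Phi^{\#}$ maps one onto the other. Using property~(i) of Hamana's completion together with uniqueness of monotone completions, I would realize $\overline{\mathcal{A}}^{sa}$ as the monotone closure of $\mathcal{A}^{sa}$ inside $\mathcal{A}^{sa\#}$ (and analogously for $\mathcal{B}$). A transfinite induction along this monotone-closure construction shows $\Phi^{\#}(\overline{\mathcal{A}}^{sa}) \subseteq \overline{\mathcal{B}}^{sa}$: being an order isomorphism of Dedekind complete lattices, $\Phi^{\#}$ preserves suprema and infima of bounded monotone nets, and such suprema of nets from $\overline{\mathcal{A}}^{sa}$ land in $\overline{\mathcal{B}}^{sa}$ by monotone completeness. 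Applying the same argument to $\Phi^{-1}$ yields the reverse inclusion. Defining $\overline{\Phi} := \Phi^{\#}|_{\overline{\mathcal{A}}^{sa}}$ gives the desired order isomorphism; it extends $\Phi$ because for $a \in \mathcal{A}^{sa}$ one has $a = \max D_a$, hence $\overline{\Phi}(a) = \sup \Phi(D_a) = \Phi(a)$.
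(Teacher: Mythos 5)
Your proposal is correct and follows essentially the same route as the paper: the paper likewise extends $\Phi$ by the sup-of-lower-cut formula to an order isomorphism between the Wright/Dedekind completions, then identifies $\overline{\mathcal{A}}^{sa}$ with the monotone closure of $\mathcal{A}^{sa}$ inside that completion (via the sup-preserving property of regular embeddings and Hamana's property~(i)) and restricts. The only cosmetic difference is that the paper works with the Dedekind completion of $\overline{\mathcal{A}}^{sa}$ rather than of $\mathcal{A}^{sa}$ directly, which coincides with yours by regularity and Wright's uniqueness.
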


Let $\mathcal{A}$ be a $C^{*}$-algebra. It is straightforward to verify that $\overline{\mathcal{A}}$ admits the direct-sum decomposition
$$
\overline{\mathcal{A}}=\overline{\mathcal{A}}_{1}\oplus \overline{\mathcal{A}}_{-1}.
$$
Since $\overline{\mathcal{A}}_{1}$ is a commutative $AW^{*}$-algebra, there exists a Stonean space $\mathcal{X}$ such that
$$
\overline{\mathcal{A}}=\mathcal{C}(\mathcal{X})\oplus \overline{\mathcal{A}}_{-1}.
$$
Hence,  for each $a\in\mathcal{A}^{sa}$, there exists a unique $f\in\mathcal{C}(\mathcal{X})$ and $a'\in\overline{\mathcal{A}}_{-1}$ such that $$a=f+a'.$$
Namely, $f=p_\mathcal{A}(1)a$ and $a'=p_\mathcal{A}(1)^\perp a$.

 We conclude the general form of order isomorphisms between the self-adjoint parts of arbitrary $C^*$-algebras.

\begin{cor}\label{main-cor}
	Let $\mathcal{A}$ and $\mathcal{B}$ be two $C^*$-algebras, and let $\mathcal{X}$ and $\mathcal{Y}$ be two compact Hausdorff spaces such that 
	$$
	\overline{\mathcal{A}} = \mathcal{C}(\mathcal{X}) \oplus \overline{\mathcal{A}}_{-1}
	\quad \text{and} \quad
	\overline{\mathcal{B}} = \mathcal{C}(\mathcal{Y}) \oplus \overline{\mathcal{B}}_{-1}.
	$$
	Suppose that $\Phi: \mathcal{A}^{sa} \to \mathcal{B}^{sa}$ is an order isomorphism and  $\overline{\Phi}:{\overline{\mathcal{A}}}^{sa}\to \overline{\mathcal{B}}^{sa}$ its extension given by Theorem \ref{extension}. Then, for every $f+a\in\mathcal{A}^{sa}$, we have 
	$$
	\Phi(f + a) = \overline{\Phi}(f) + \overline{\Phi}(a).
	$$
Moreover, there exist a homeomorphism $\mu: \mathcal{Y} \to \mathcal{X}$, a dense $G_\delta$-subset $\mathcal{Y}_{0}$ of $\mathcal{Y}$, a family of increasing homeomorphisms $(f_y)_{y \in \mathcal{Y}_{0}}$ of $\mathbb{R}$, an element $b \in  (\overline{\mathcal{B}}_{-1})^{++}$, an element $c \in  \overline{\mathcal{B}}_{-1}^{sa}$, and a Jordan $^*$-isomorphism 
	$$J: \overline{\mathcal{A}}_{-1} \to \overline{\mathcal{B}}_{-1}$$
	such that 
	$$
	\Phi(f)(y) = f_y\big(f(\mu(y))\big), 
	\qquad 
	f \in \mathcal{C}(\mathcal{X}), \ y \in \mathcal{Y}_{0},
	$$
	and 
	$$
	\Phi(a) = b J(a) b + c, 
	\qquad 
	a \in \overline{\mathcal{A}}_{-1}.
	$$
\end{cor}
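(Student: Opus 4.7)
The plan is to deduce Corollary~\ref{main-cor} by chaining together four previously-stated results: Theorem~\ref{extension}, Theorem~\ref{I_1}, Proposition~\ref{comm_case}, and part~(4) of Theorem~\ref{intervals}. Once these are in hand the argument is essentially bookkeeping.

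First I would invoke Theorem~\ref{extension} to obtain the unique order-isomorphism extension $\overline{\Phi}:\overline{\mathcal{A}}^{sa}\to\overline{\mathcal{B}}^{sa}$ of $\Phi$. Since Hamana's regular monotone completion $\overline{\mathcal{A}}$ is monotone complete and hence an $AW^{*}$-algebra (similarly for $\overline{\mathcal{B}}$), the hypotheses of Theorem~\ref{I_1} are met with $\mathcal{I}=\overline{\mathcal{A}}^{sa}$ and $\mathcal{J}=\overline{\mathcal{B}}^{sa}$. Applying it to $\overline{\Phi}$ yields a unique pair of order isomorphisms
\[
\overline{\Phi}_{1}:\mathcal{C}(\mathcal{X})^{sa}\to\mathcal{C}(\mathcal{Y})^{sa},\qquad \overline{\Phi}_{-1}:\overline{\mathcal{A}}_{-1}^{sa}\to\overline{\mathcal{B}}_{-1}^{sa},
\]
such that $\overline{\Phi}(h+a')=\overline{\Phi}_{1}(h)+\overline{\Phi}_{-1}(a')$ for every $h\in\mathcal{C}(\mathcal{X})^{sa}$ and $a'\in\overline{\mathcal{A}}_{-1}^{sa}$. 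Specialising this to the decomposition $f+a$ of an element of $\mathcal{A}^{sa}$ described in the paragraph preceding the corollary, and using that $\overline{\Phi}$ agrees with $\Phi$ on $\mathcal{A}^{sa}$, gives the first displayed identity $\Phi(f+a)=\overline{\Phi}(f)+\overline{\Phi}(a)$.

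Next I would analyse the two components separately. Applying Proposition~\ref{comm_case} to $\overline{\Phi}_{1}$ with $\mathcal{I}=\mathbb{R}$ produces the homeomorphism $\mu:\mathcal{Y}\to\mathcal{X}$, the dense $G_\delta$-subset $\mathcal{Y}_{0}\subseteq\mathcal{Y}$, and the family $(f_y)_{y\in\mathcal{Y}_{0}}$ of increasing homeomorphisms of $\mathbb{R}$ representing $\overline{\Phi}_{1}$ in the prescribed pointwise form. In parallel, $\overline{\Phi}_{-1}$ is an order isomorphism between the self-adjoint parts of two $AW^{*}$-algebras, neither of which carries a type~$\mathrm{I}_{1}$ direct summand by construction of $\overline{\mathcal{A}}_{-1}$ and $\overline{\mathcal{B}}_{-1}$; part~(4) of Theorem~\ref{intervals} then supplies the positive invertible $b\in(\overline{\mathcal{B}}_{-1})^{++}$, the self-adjoint $c\in\overline{\mathcal{B}}_{-1}^{sa}$, and the Jordan $^{*}$-isomorphism $J:\overline{\mathcal{A}}_{-1}\to\overline{\mathcal{B}}_{-1}$ with $\overline{\Phi}_{-1}(a)=bJ(a)b+c$.

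The one point that requires a little care, rather than a genuine obstacle, is checking that the Stonean spaces $\mathcal{X}$ and $\mathcal{Y}$ fixed in the statement of the corollary really coincide with those attached to the type~$\mathrm{I}_{1}$ direct summands of $\overline{\mathcal{A}}$ and $\overline{\mathcal{B}}$ via the Gelfand correspondence, so that the direct-sum splitting displayed in the statement is exactly the one produced by Theorem~\ref{I_1} when applied to $\overline{\Phi}$. Once this identification is made, the two explicit formulas are immediate from the representations of $\overline{\Phi}_{1}$ and $\overline{\Phi}_{-1}$ obtained above.
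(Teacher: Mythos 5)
Your proposal follows exactly the derivation the paper intends: the corollary is stated without an explicit proof, as an immediate consequence of Theorem~\ref{extension}, Theorem~\ref{I_1}, Proposition~\ref{comm_case} and Theorem~\ref{A^sa} (part~(4) of Theorem~\ref{intervals}), and your chain of applications --- extend $\Phi$ to $\overline{\Phi}$, split along $p_{\overline{\mathcal{A}}}(1)$ via Theorem~\ref{I_1} (legitimate since a monotone complete $C^*$-algebra is an $AW^*$-algebra and $\overline{\mathcal{A}}^{sa}$ is an operator interval), then treat the two components by the commutative proposition and by Theorem~\ref{A^sa} --- is precisely that.

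One step deserves more care than you (and, to be fair, the corollary as printed) give it. Theorem~\ref{I_1} yields $\overline{\Phi}(f+a)=\overline{\Phi}_1(f)+\overline{\Phi}_{-1}(a)$, but $\overline{\Phi}_1(f)$ is not literally $\overline{\Phi}(f)$: applying the same decomposition to the element $f=f+0$ gives $\overline{\Phi}(f)=\overline{\Phi}_1(f)+\overline{\Phi}_{-1}(0)$, and $\overline{\Phi}_{-1}(0)$ need not vanish. For instance, if $\Phi$ is the translation $x\mapsto x+c$ with $c$ having nonzero components in both summands, then $\Phi(f+a)=f+a+c$ while $\overline{\Phi}(f)+\overline{\Phi}(a)=f+a+2c$, so the first displayed identity fails when read literally; likewise $\overline{\Phi}(f)$ then fails to lie in $\mathcal{C}(\mathcal{Y})$, so the pointwise formula $\Phi(f)(y)=f_y(f(\mu(y)))$ cannot be about $\overline{\Phi}(f)$ itself. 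The identity and the two explicit formulas must therefore be read as statements about the components $\overline{\Phi}_1$ and $\overline{\Phi}_{-1}$ furnished by Theorem~\ref{I_1} (equivalently, about $p_{\overline{\mathcal{B}}}(1)\overline{\Phi}(\cdot)$ and $p_{\overline{\mathcal{B}}}(1)^{\perp}\overline{\Phi}(\cdot)$), not about $\overline{\Phi}$ evaluated separately at $f$ and at $a$. With that reading, which is what your second and third paragraphs actually establish, the argument is complete; the slippage occurs only in the sentence where you claim the literal first identity follows by ``specialising''.
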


We conclude this section by answering Problem~\ref{problem1}.
\begin{cor}
	Let $\mathcal{A}$ and $\mathcal{B}$ be $C^*$-algebras, and let $\Phi:\mathcal{A}^{sa}\to\mathcal{B}^{sa}$ be an order isomorphism. If the regular monotone completion of either $\mathcal{A}$ or $\mathcal{B}$ has no direct summand of type~$\mathrm{I}_1$, then $\Phi$ is affine.
\end{cor}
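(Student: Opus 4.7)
The plan is to chain together the results already stated in the excerpt. First, I invoke Theorem \ref{extension} to obtain the unique order-isomorphic extension $\overline{\Phi}:\overline{\mathcal{A}}^{sa}\to\overline{\mathcal{B}}^{sa}$. Since $\Phi$ is the restriction of $\overline{\Phi}$, it suffices to prove that $\overline{\Phi}$ is affine on $\overline{\mathcal{A}}^{sa}$: the restriction of an affine map to any subset remains affine, so the desired conclusion will follow immediately.

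Next, I reduce to the case in which \emph{both} regular monotone completions lack a type~$\mathrm{I}_1$ direct summand. By Theorem \ref{I_1} applied to $\overline{\Phi}$ (with $\mathcal{I}=\overline{\mathcal{A}}^{sa}$ and $\mathcal{J}=\overline{\mathcal{B}}^{sa}$), we have
\[
\overline{\Phi}(a)=\overline{\Phi}_{1}\bigl(p_{\overline{\mathcal{A}}}(1)a\bigr)+\overline{\Phi}_{-1}\bigl(p_{\overline{\mathcal{A}}}(1)^{\perp}a\bigr),\qquad a\in\overline{\mathcal{A}}^{sa},
\]
where $\overline{\Phi}_{1}:p_{\overline{\mathcal{A}}}(1)\overline{\mathcal{A}}^{sa}\to p_{\overline{\mathcal{B}}}(1)\overline{\mathcal{B}}^{sa}$ is an order isomorphism between the type~$\mathrm{I}_1$ parts. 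Because an order isomorphism between trivial and non-trivial posets cannot exist, the vanishing of $p_{\overline{\mathcal{A}}}(1)$ forces $p_{\overline{\mathcal{B}}}(1)=0$, and symmetrically the other way around. Thus the hypothesis of the corollary (that \emph{either} completion is free of a type~$\mathrm{I}_1$ summand) upgrades to the statement that \emph{both} are, and the decomposition collapses to $\overline{\Phi}=\overline{\Phi}_{-1}$ acting on $\overline{\mathcal{A}}^{sa}=\overline{\mathcal{A}}_{-1}^{sa}$.

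Now I apply the description of order isomorphisms between the self-adjoint parts of $AW^{*}$-algebras without a type~$\mathrm{I}_1$ direct summand, i.e., Theorem \ref{intervals}(4) (equivalently, the non-commutative half of Corollary \ref{main-cor}): there exist $b\in\overline{\mathcal{B}}^{++}$, $c\in\overline{\mathcal{B}}^{sa}$, and a Jordan $^{*}$-isomorphism $J:\overline{\mathcal{A}}\to\overline{\mathcal{B}}$ with
\[
\overline{\Phi}(a)=bJ(a)b+c,\qquad a\in\overline{\mathcal{A}}^{sa}.
\]
Since $J$ is $\mathbb{R}$-linear on self-adjoint parts and the map $x\mapsto bxb$ is $\mathbb{R}$-linear, the right-hand side is an affine function of $a$. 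Restricting to $\mathcal{A}^{sa}$ gives $\Phi(a)=bJ(a)b+c$ for $a\in\mathcal{A}^{sa}$, which is affine.

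The argument is essentially bookkeeping once the heavy machinery (Theorems \ref{extension}, \ref{I_1}, and \ref{intervals}) is in hand; the only subtle point is the symmetry step verifying that absence of a type~$\mathrm{I}_1$ summand on \emph{one} side transfers to the \emph{other}, which I expect to be the main conceptual obstacle. This is resolved by observing that $\overline{\Phi}_{1}$ is an order isomorphism between the type~$\mathrm{I}_1$ parts, so its domain is $\{0\}$ if and only if its codomain is $\{0\}$; equivalently, the central projection $p_{\overline{\mathcal{A}}}(1)$ vanishes precisely when $p_{\overline{\mathcal{B}}}(1)$ does.
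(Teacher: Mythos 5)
Your proposal is correct and follows exactly the route the paper intends: extend $\Phi$ to $\overline{\Phi}$ via Theorem \ref{extension}, use Theorem \ref{I_1} to split off the type~$\mathrm{I}_1$ summands and the order-isomorphism symmetry to see that both vanish when one does, and then apply Theorem \ref{intervals}(4) to obtain the affine form $a\mapsto bJ(a)b+c$. This is precisely the chain underlying Corollary \ref{main-cor}, so no further comment is needed.
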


\section{Order isomorphisms between effect
algebras}\label{S3}

This section investigates order isomorphisms between effect algebras of $AW^*$-algebras. 

The orthogonal  projections play an important role in  studying order isomorphisms between  effect algebras, see for instance \cite{Our, mori, Semrl}. We recall    that any order isomorphism between effect  algebras of $C^*$-algebras  preserves the orthogonal projections. This fact can be proven by the same method as in \cite[Lemma 3.1]{mori}, so we omit its  proof.  

The following lemma  is   key to analysing the order on projections.

\begin{lem}\label{Homo}
Let $\mathcal{A}$ be a $C^*$-algebra. Let $t\in[0,1]$ and $\alpha$ be a complex number with   $|\alpha|=\sqrt{t(1-t)}$. Suppose that $p=\begin{pmatrix}
	1 & 0 \\ 
	0 & 0
\end{pmatrix}$ and $q=\begin{pmatrix}
	t                & \alpha\\
	\alpha^* & 1-t
\end{pmatrix}$ are in $\mathrm{M}_2(\mathcal{A})$. Then $$\frac{1}{2-t}p=\mathrm{inf}_{\mathrm{M}_2(\mathcal{A})^1}\left\lbrace p,\mathrm{sup}_{\mathrm{M}_2(\mathcal{A})^1}\{q,1/2\}\right\rbrace.$$
\end{lem}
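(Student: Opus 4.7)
The plan is to compute the inner supremum $r := \sup_{\mathrm{M}_2(\mathcal{A})^1}\{q, 1/2\}$ first, then the outer infimum. The starting observation is that since $|\alpha|^2 = t(1-t)$, a direct check gives $q^2 = q$, so $q$ (like $p$) is a projection in $\mathrm{M}_2(\mathcal{A})$; this reduces everything to block-form manipulations anchored at these two projections.

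For the supremum I would use the following standard fact: \emph{if $p_0$ is a projection in a unital $C^*$-algebra and $p_0 \leq s \leq 1$, then $p_0 s p_0 = p_0$, and writing $y := s - p_0 \geq 0$ the identity $\|y^{1/2} p_0\|^2 = \|p_0 y p_0\| = 0$ forces $y p_0 = 0$; consequently $s = p_0 + (1-p_0)s(1-p_0)$.} Applying this with $p_0 = q$ to an arbitrary upper bound $s$ of $\{q, 1/2\}$ in $\mathrm{M}_2(\mathcal{A})^1$, and then compressing the inequality $s \geq 1/2$ by $1-q$ to obtain $(1-q)s(1-q) \geq \tfrac{1}{2}(1-q)$, one arrives at $s \geq q + \tfrac{1}{2}(1-q) = \tfrac{1}{2}(1+q)$. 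Since $\tfrac{1}{2}(1+q) \in \mathrm{M}_2(\mathcal{A})^1$ is visibly an upper bound of $\{q, 1/2\}$, the supremum exists and equals $r = \tfrac{1}{2}(1+q)$.

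For the infimum, the dual fact --- if $p_0$ is a projection and $0 \leq u \leq p_0$, then $(1-p_0)u(1-p_0) = 0$, forcing $u = p_0 u p_0$ by the same positive-square-root argument --- applied with $p_0 = p$ shows that every lower bound $u$ of $\{p, r\}$ in $\mathrm{M}_2(\mathcal{A})^1$ has the form $u = \begin{pmatrix} x & 0 \\ 0 & 0 \end{pmatrix}$ with $x \in [0,1]_{\mathcal{A}}$. The remaining constraint $u \leq r$ amounts to positivity in $\mathrm{M}_2(\mathcal{A})$ of
$$r - u = \tfrac{1}{2}\begin{pmatrix} 1+t-2x & \alpha \\ \bar\alpha & 2-t \end{pmatrix}.$$
Since the $(2,2)$-entry is the invertible scalar $(2-t)/2 \cdot 1_{\mathcal{A}}$, the Schur-complement criterion applies and reduces this positivity to the single inequality $(1+t)/2 - x \geq |\alpha|^2/(2(2-t))$ in $\mathcal{A}$; substituting $|\alpha|^2 = t(1-t)$ and simplifying yields exactly $x \leq \tfrac{1}{2-t}\, 1_{\mathcal{A}}$. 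The greatest such $x$ is $\tfrac{1}{2-t}\, 1_{\mathcal{A}}$, corresponding to the element $\tfrac{1}{2-t} p$; this element is itself a lower bound because the Schur-complement inequality holds for it with equality (equivalently, $r - \tfrac{1}{2-t}p$ is a rank-one positive $2 \times 2$ matrix over $\mathbb{C}\cdot 1_{\mathcal{A}}$).

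The one step requiring some care is the Schur-complement reduction over a general $C^*$-algebra, but this goes through verbatim because the pivot $C = (2-t)/2 \cdot 1_{\mathcal{A}}$ is central and invertible: the factorization
$$\begin{pmatrix} A & B \\ B^* & C \end{pmatrix} = \begin{pmatrix} 1 & BC^{-1} \\ 0 & 1 \end{pmatrix}\begin{pmatrix} A - BC^{-1}B^* & 0 \\ 0 & C \end{pmatrix}\begin{pmatrix} 1 & 0 \\ C^{-1}B^* & 1 \end{pmatrix}$$
is valid in $\mathrm{M}_2(\mathcal{A})$ and makes positivity of the block matrix equivalent to $A - BC^{-1}B^* \geq 0$, which is what the computation needs.
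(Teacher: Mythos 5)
Your proof is correct and follows essentially the same route as the paper's: both identify $\sup_{\mathrm{M}_2(\mathcal{A})^1}\{q,1/2\}$ as $\tfrac12(1+q)=q+\tfrac12 q^\perp$, reduce any lower bound of $\{p,\cdot\}$ to the $(1,1)$ corner, and extract the bound $x\leq\tfrac{1}{2-t}$ by a triangular congruence (your Schur-complement factorization is exactly the paper's conjugation by $\bigl(\begin{smallmatrix}1&0\\ -\sqrt{t(1-t)}/(2-t)&0\end{smallmatrix}\bigr)$ in disguise). The only substantive difference is that you explicitly justify the supremum identity and work with general complex $\alpha$, both of which the paper's proof takes for granted.
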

\begin{proof}
For simplicity, we suppose that $\alpha=\sqrt{t(1-t)}$.  We have $$\begin{array}{llllllllllllllll}
	\mathrm{sup}_{\mathrm{M}_2(\mathcal{A})^1}\{q,1/2\}-\dfrac{1}{2-t}p &=& q+1/2q^\perp-\dfrac{1}{2-t}p \\
	\\
	&=&1/2 \begin{pmatrix}
		\dfrac{t(1-t)}{2-t} & \sqrt{t(1-t)}\\ \\
		\sqrt{t(1-t)} & 2-t
	\end{pmatrix}\\ \\
	&=& 1/2 \begin{pmatrix}
		\sqrt{\dfrac{t(1-t)}{2-t}} & 0\\ \\
		\sqrt{2-t} &  0
	\end{pmatrix}  \begin{pmatrix}
		\sqrt{\dfrac{t(1-t)}{2-t}} & 0\\ \\
		\sqrt{2-t} &  0
	\end{pmatrix}^* \\ 
	&\geq& 0.
\end{array}$$	
Hence, $\dfrac{1}{2-t}p$ is a lower-bound of $\left\lbrace p,\mathrm{sup}_{\mathrm{M}_2(\mathcal{A})^1}\{q,1/2\}\right\rbrace $. On the other hand, for every  $a=\begin{pmatrix}
	a_{11}&a_{12}\\
	a_{21} & a_{22}
\end{pmatrix}\in \mathrm{M}_2(\mathcal{A})^+$ that is majorized by $p$, $a_{12}=a_{21}=a_{22}=0$ and $a_{11}\in\mathcal{A}^+$. If $a\leq q+1/2q^\perp$, then   $$\begin{array}{llllllll}
	a&=&\begin{pmatrix}
		1 & 0 \\ \\
		\dfrac{-\sqrt{t(1-t)}}{2-t} & 0
	\end{pmatrix}^*a\begin{pmatrix}
		1 & 0 \\ \\
		\dfrac{-\sqrt{t(1-t)}}{2-t} & 0
	\end{pmatrix}\\\\
	&\leq& \begin{pmatrix}
		1 & 0 \\ \\
		\dfrac{-\sqrt{t(1-t)}}{2-t} & 0
	\end{pmatrix}^*  (q+1/2q^\perp) \begin{pmatrix}
		1 & 0 \\ \\
		\dfrac{-\sqrt{t(1-t)}}{2-t} & 0
	\end{pmatrix}\\\\
	&=& \dfrac{1}{2-t}p.
\end{array}$$	
This completes the proof.
\end{proof}

It is known that the set of all orthogonal projections of an $AW^*$-algebra forms a lattice.  The following lemma   is motivated by \cite[Proposition 3.9]{Our}. It provides an order characterization of the orthogonality between  projections.

\begin{lem}\label{orth}
Let $\mathcal{A}$ be an $AW^*$-algebra.  Two projections $p,q\in\mathcal{P(A)}$    are orthogonal if and only if
$$\mathrm{inf}_{\mathcal{A}^1}\{p,1/2\}=\mathrm{inf}_{\mathcal{A}^1}\left\lbrace p, \mathrm{sup}_{\mathcal{A}^1}\{q, 1/2\}\right\rbrace.$$ 

\end{lem}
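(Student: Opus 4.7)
My plan is to give an explicit formula for $\mathrm{inf}_{\mathcal{A}^1}\{p, \frac{1}{2}(1+q)\}$, from which the equivalence will fall out by inspection. I would start by simplifying the two ``one-projection'' extrema. For any projection $q$ one has $\mathrm{sup}_{\mathcal{A}^1}\{q, 1/2\} = \frac{1}{2}(1+q)$: it is clearly an upper bound, and any upper bound $a \in \mathcal{A}^1$ satisfies $qaq = q$ (by squeezing $q \le qaq \le q$), so $aq = qa = q$ and $a = q + q^\perp a q^\perp$, whereupon $a \ge 1/2$ forces $q^\perp a q^\perp \ge \frac{1}{2}q^\perp$, giving $a \ge \frac{1}{2}(1+q)$. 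Symmetrically $\mathrm{inf}_{\mathcal{A}^1}\{p, 1/2\} = \frac{1}{2}p$, since any such lower bound is of the form $pap$ and satisfies $a \le \frac{1}{2}p$ inside the corner $p\mathcal{A}p$. Thus the lemma becomes
$$
\mathrm{inf}_{\mathcal{A}^1}\bigl\{p, \tfrac{1}{2}(1+q)\bigr\} = \tfrac{1}{2}p \;\Longleftrightarrow\; pq = 0.
$$

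The central claim is the explicit formula
$$
\mathrm{inf}_{\mathcal{A}^1}\bigl\{p, \tfrac{1}{2}(1+q)\bigr\} = c, \qquad c := (2p - pqp)^{-1} \text{ taken in } p\mathcal{A}p.
$$
The inverse exists because $p \le 2p - pqp \le 2p$; inverting this sandwich in $p\mathcal{A}p$ yields $\frac{1}{2}p \le c \le p$, with $c = \frac{1}{2}p$ if and only if $pqp = 0$, equivalently $pq = 0$. Hence the lemma follows at once from this formula.

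To verify the formula I would use the Schur complement. Write $u = pqp$, $v = pqp^\perp$, and $D = p^\perp(1+q)p^\perp$, and note $D \ge p^\perp$, so $D$ is invertible in $p^\perp\mathcal{A}p^\perp$. Decomposing $e := 1 + q - 2c$ with respect to $1 = p + p^\perp$ yields
$$
e = \begin{pmatrix} p + u - 2c & v \\ v^{*} & D \end{pmatrix},
$$
so by the Schur criterion $e \ge 0$ iff $(p + u - 2c) - vD^{-1}v^{*} = 0$. The crucial algebraic step is the identity $Dv^{*}c = v^{*}$: expanding the left-hand side as $p^\perp q c + p^\perp q p^\perp q c$ and simplifying the second summand via $q^{2} = q$, $pqc = uc$, and the defining relation $(2p - u)c = p$ recovers exactly $v^{*}$. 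Combined with $vv^{*} = u - u^{2}$ (the $(1,1)$-block of $q^{2} = q$) this gives $vD^{-1}v^{*} = u(p - c) = p + u - 2c$, so the Schur complement vanishes and $e \ge 0$. Running the same Schur bound in reverse for any lower bound $b$ of $\{p, \frac{1}{2}(1+q)\}$ shows $b \le \frac{1}{2}(p + uc) = c$, confirming $c$ is indeed the greatest lower bound.

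The main obstacle is the identity $Dv^{*}c = v^{*}$; everything else is block-matrix bookkeeping. The AW$^{*}$ hypothesis plays no role beyond well-definedness of the corner inverse, which in fact holds in any unital $C^{*}$-algebra.
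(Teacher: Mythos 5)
Your proposal is correct, and it takes a genuinely different route from the paper. The paper proves the easy implication exactly as you do (any lower bound $a$ of $\{p,q+\tfrac12 q^\perp\}$ satisfies $a=pap\le\tfrac12 pq^\perp p=\tfrac12 p$ when $pq=0$), but for the converse it stays inside the $AW^*$ framework: it splits $p,q$ using the projection lattice into the pieces $p_1=\inf\{p,q^\perp\}$, $p_2=\inf\{p^\perp,q\}$, $p_3=\sup\{p-p_1,q-p_2\}$, invokes Hamhalter's two-projection structure theory to embed the generic-position part into a copy of $\mathcal{C}(\mathcal{X},\mathrm{M}_2(\mathbb{C}))$, and then applies the explicit $2\times2$ computation of its Lemma~\ref{Homo} (which gives $\inf_{\mathrm{M}_2(\mathcal{A})^1}\{p,\sup\{q,1/2\}\}=\tfrac{1}{2-t}p$) to force $f=0$. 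You instead compute the infimum in closed form directly, $\inf_{\mathcal{A}^1}\{p,\tfrac12(1+q)\}=(2p-pqp)^{-1}$ in the corner $p\mathcal{A}p$, via a Schur-complement factorization; I checked the key identity $Dv^*c=v^*$ (it follows from $Dv^*=v^*(2p-u)$ and $(2p-u)c=p$), the consequence $vD^{-1}v^*=u(p-c)=p+u-2c$, and the reverse bound $b\le\tfrac12(p+uc)=c$ for an arbitrary lower bound $b=pbp$; all are sound, as is the Schur positivity criterion since $D\ge p^\perp$ is invertible in $p^\perp\mathcal{A}p^\perp$. Your argument buys two things the paper's does not: it is valid in any unital $C^*$-algebra (no $AW^*$ hypothesis, no projection lattice, no appeal to Hamhalter), and the explicit formula simultaneously subsumes the paper's Lemma~\ref{Homo} as the special case $pqp=tp$. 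What the paper's route buys is consistency with the toolkit it reuses elsewhere (the position-$(\mathrm{p})$ decomposition and the $\mathcal{C}(\mathcal{X},\mathrm{M}_2(\mathbb{C}))$ model recur in the type $\mathrm{I}_2$ analysis), at the cost of heavier machinery for this particular statement.
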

\begin{proof}
We can easily check that $1/2p=\mathrm{inf}_{\mathcal{A}^1}\{p,1/2\}$ is a lower-bound of $\left\lbrace p, \mathrm{sup}_{\mathcal{A}^1}\{q, 1/2\}\right\rbrace$. On the other hand, if $p,q$ are orthogonal, using the fact that $p\leq q^{\perp}$, for every lower-bound $a\in\mathcal{A}^1$  of $\left\lbrace p, \mathrm{sup}_{\mathcal{A}^1}\{q, 1/2\}\right\rbrace$, we have  $$a=pap\leq p(q+1/2q^{\perp})p= 1/2pq^\perp p=1/2p.$$

Conversely,  by considering $p$  and $q$ as projections of the $AW^*$-subalgebra    $\mathrm{sup}_{\mathcal{P(A)}}\{p,q\}\mathcal{A}\mathrm{sup}_{\mathcal{P(A)}}\{p,q\}$,   then without loss of generality we can  
assume that $\mathrm{sup}_{\mathcal{P(A)}}\{p,q\}=1$. Denote $p_1=\mathrm{inf}_{\mathcal{P(A)}}\{p,q^\perp\}$, $p_2=\mathrm{inf}_{\mathcal{P(A)}}\{p^\perp,q\}$ and $p_3=\mathrm{sup}_{\mathcal{P(A)}}\{p-p_1,q-p_2\}$.  It is rather easy to
verify that $\mathrm{inf}_{\mathcal{P(A)}}\{p,q\}=0$, the projections $p_i$, $i\in\{1,2,3\}$, form a partition of identity, i.e., $p_1+p_2+p_3=1$ and they  commute with $p$ and $q$. Note that $p, q$ are orthogonal if and only if $p_3p, p_3q$ are orthogonal. The projections $p_3p,p_3q$ are in position $(\mathrm{p})$ as projections of the $AW^*$-algebra $p_3\mathcal{A}p_3$, i.e.,
$$
\begin{array}{lllll}
	\mathrm{inf}_{\mathcal{P}(p_3\mathcal{A}p_3)}\{p_3p,p_3q\}&=&\mathrm{inf}_{\mathcal{P}(p_3\mathcal{A}p_3)}\{p_3p^\perp,p_3q\}
	=\mathrm{inf}_{\mathcal{P}(p_3\mathcal{A}p_3)}\{p_3p,p_3q^\perp\}\\
	&=&\mathrm{inf}_{\mathcal{P}(p_3\mathcal{A}p_3)}\{p_3p^\perp,p_3q^\perp\}=0.
\end{array}
$$ Then by \cite[Proposition 2.4, Proposition 2.5]{Hamhalter}, there exists an $AW^*$-subalgebra of $p_3\mathcal{A}p_3$ contains
$p_3p, p_3q$ and 
isomorphic to $\mathcal{C(X},\mathrm{M}_2(\mathbb{C}))$, where $\mathcal{X}$ is a compact  Hausdorff space.  Moreover,  we can
represent $p_3p$ and $p_3q$  as matrix valued functions $$(p_3p)(x)=\begin{pmatrix}
	1 & 0\\
	0 & 0
\end{pmatrix},\qquad   (p_3q)(x)=\begin{pmatrix}
	f(x) & \sqrt{f(x)(1-f(x))}\\
	\sqrt{f(x)(1-f(x))} & 1-f(x)
\end{pmatrix}$$ 
on $\mathcal{X}$, for some continuous function  $f$  on $\mathcal{X}$ with values in $[0, 1]$.   By Lemma \ref{Homo}, $$\dfrac{1}{2-f}p_3p\leq p_3(q+1/2q^\perp).$$
Thus, $$\begin{array}{llllllll}
1/2p+(\dfrac{1}{2-f}-1/2)p_3p  &=& \dfrac{1}{2-f}p_3p +1/2(p_1+p_2)p \\ \\
&\leq&   p_3(q+1/2q^\perp) + 1/2(p_1+p_2)p \\\\
&=& p_3(q+1/2q^\perp) + 1/2p_1 \\ \\
&\leq& p_3(q+1/2q^\perp) + (q+1/2q^\perp)(p_1+p_2)\\ \\
&=& q+1/2q^\perp.
\end{array}$$
 So by  the assumption,  $\dfrac{1}{2-f}=1/2$. Thus, $f=0$, which means that  $p_3p$ and $p_3q$ are orthogonal. 
\end{proof}

\begin{cor}\label{orthoisomorphism}
Let $\mathcal{A}$ and $\mathcal{B}$ be two $AW^*$-algebras.  Suppose that    $\Phi:\mathcal{A}^1\to \mathcal{B}^1$ is an order isomorphism  with $\Phi(1/2)=1/2$. Then the restriction of $\Phi$ to $\mathcal{P(A)}$ is an orthoisomorphism 	onto $\mathcal{P(B)}$, i.e., $\Phi$
maps $\mathcal{P(A)}$ onto $\mathcal{P(B)}$ bijectively and   for every  $p,q \in \mathcal{P(A)}$ $$pq=0 \qquad  \mbox{ if and only if}  \qquad \Phi(p) \Phi(q)=0.$$
\end{cor}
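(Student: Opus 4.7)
The plan is to combine two ingredients already available: the fact (invoked just before Lemma~\ref{Homo}) that every order isomorphism between effect algebras of $C^*$-algebras maps projections bijectively onto projections, and the purely order-theoretic characterisation of orthogonality given by Lemma~\ref{orth}. Because Lemma~\ref{orth} describes orthogonality using only the element $1/2$ together with the lattice operations $\inf_{\mathcal{A}^1}$ and $\sup_{\mathcal{A}^1}$, an order isomorphism fixing $1/2$ will transport the condition verbatim from $\mathcal{A}$ to $\mathcal{B}$.

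\textbf{Step 1: bijection of projections.} The order isomorphism $\Phi$ sends $\mathcal{P(A)}$ into $\mathcal{P(B)}$ by the stated general fact. Applying the same fact to the order isomorphism $\Phi^{-1}$ gives the reverse inclusion, so $\Phi$ restricts to a bijection $\mathcal{P(A)}\to\mathcal{P(B)}$.

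\textbf{Step 2: transport the orthogonality criterion.} Since $\Phi$ is an order isomorphism, it preserves every infimum and supremum that exists in $\mathcal{A}^1$; for $p,q\in\mathcal{P(A)}$ and the element $1/2$ the required suprema and infima exist in $\mathcal{A}^1$ (this is implicit in, and explicitly used by, the proof of Lemma~\ref{orth}). Using $\Phi(1/2)=1/2$, I would then run the chain of equivalences
\begin{align*}
pq=0
&\iff \mathrm{inf}_{\mathcal{A}^1}\{p,1/2\}=\mathrm{inf}_{\mathcal{A}^1}\!\left\{p,\mathrm{sup}_{\mathcal{A}^1}\{q,1/2\}\right\} \\
&\iff \Phi\!\left(\mathrm{inf}_{\mathcal{A}^1}\{p,1/2\}\right)=\Phi\!\left(\mathrm{inf}_{\mathcal{A}^1}\!\left\{p,\mathrm{sup}_{\mathcal{A}^1}\{q,1/2\}\right\}\right) \\
&\iff \mathrm{inf}_{\mathcal{B}^1}\{\Phi(p),1/2\}=\mathrm{inf}_{\mathcal{B}^1}\!\left\{\Phi(p),\mathrm{sup}_{\mathcal{B}^1}\{\Phi(q),1/2\}\right\} \\
&\iff \Phi(p)\Phi(q)=0,
\end{align*}
where the first and last equivalences are Lemma~\ref{orth} applied to $\mathcal{A}$ and $\mathcal{B}$ respectively, the second equivalence is the injectivity of $\Phi$, and the third uses that $\Phi$ commutes with $\inf_{\mathcal{A}^1}$ and $\sup_{\mathcal{A}^1}$ together with $\Phi(1/2)=1/2$.

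There is essentially no hard step here; the corollary is a direct dividend of Lemma~\ref{orth}. The only point requiring a sentence of care is the observation that the order isomorphism $\Phi$ automatically transports lattice operations (existing suprema and infima) between $\mathcal{A}^1$ and $\mathcal{B}^1$, which is immediate from the definition. Thus the bulk of the intellectual work has already been done in Lemmas~\ref{Homo} and~\ref{orth}, and the corollary follows by a one-line application of the fixed point $\Phi(1/2)=1/2$.
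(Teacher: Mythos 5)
Your proof is correct and is exactly the derivation the paper intends: the corollary is stated without proof precisely because it follows from the projection-preservation fact quoted before Lemma~\ref{Homo} together with the order-theoretic orthogonality criterion of Lemma~\ref{orth}, transported via $\Phi(1/2)=1/2$ and the fact that order isomorphisms preserve existing infima and suprema. No gaps.
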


The commutant $\mathcal{F}^{'}$ of a non-empty set $\mathcal{F} \subseteq \mathcal{A}$ is the set of all elements of $\mathcal{A}$ that commute with every element of $\mathcal{F}$. If $\mathcal{A}$ is an $AW^*$-algebra and $a\in\mathcal{A}^{sa}$, by \cite{Saito-wright}, $\{a\}^{''}\cap \mathcal{A}^{sa}$ is  monotone closed. Furthermore, by \cite{Berberian}, $\{a\}^{''}$  can be identified with $\mathcal{C(X)}$, for some Stonean space $\mathcal{X}$, i.e., a compact, Hausdorff space that is extremely disconnected, specifically, the closure of every open  subset is open.

The modulus of  complex number $z$ is denoted by $|z|$.
\begin{lem}\label{finite.sum}
	Let $\mathcal{A}$ be an $AW^*$-algebra and  $a\in\mathcal{A}^1$. Then there exists a sequence $(a_n)\subset\mathcal{A}^1$ such that $$a=\mathrm{sup}_{\mathcal{A}^{1}}\{a_n:n\};$$
and for each $n$, there exist a finite sequence $\{p_1,p_2,...,p_k\}\subseteq\mathcal{P}(\{a\}^{''})$ of  mutually orthogonal projections, and $\{t_1,t_2,...,t_k\}\subset(0,1]$ such that $a_n= \underset{i=1}{\overset{k}{\sum}} t_ip_i$.
	\end{lem}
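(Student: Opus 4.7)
The plan is a standard spectral Riemann lower-sum construction carried out inside the commutative $AW^*$-algebra $\{a\}''$, which is itself monotone complete. Since $\{a\}''$ is a commutative $AW^*$-subalgebra of $\mathcal{A}$, the positive part $(a - t)^{+}$ is well defined in $\{a\}''$ for every $t \in [0,1)$. I set
\[
e_t \;:=\; \mathrm{RP}\bigl((a - t)^{+}\bigr) \;\in\; \mathcal{P}(\{a\}''),
\]
so that $(e_t)$ is the usual spectral family: decreasing in $t$, lying in $\mathcal{P}(\{a\}'')$, and satisfying $a\,e_t \geq t\,e_t$ and $a\,(1 - e_t) \leq t\,(1 - e_t)$.

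For each integer $n \geq 1$, I partition $[0,1]$ uniformly by $t_i = i/n$ and define
\[
p_i^{(n)} \;:=\; e_{(i-1)/n} - e_{i/n},\qquad i = 1, \ldots, n,
\]
with the convention $e_1 = 0$. Because $(e_t)$ is decreasing, the identity $e_s\,e_t = e_t$ whenever $s \leq t$ gives that each $p_i^{(n)}$ is a projection in $\mathcal{P}(\{a\}'')$ and that the $p_i^{(n)}$ are mutually orthogonal. I then put
\[
a_n \;:=\; \sum_{i = 2}^{n} \frac{i - 1}{n}\, p_i^{(n)},
\]
discarding the $i = 1$ term (whose coefficient is $0$) and any vanishing $p_i^{(n)}$. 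The surviving coefficients lie in $(0,1]$, so $a_n$ has exactly the form required by the lemma.

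The next step is the standard two-sided slab estimate: for each $i$,
\[
\tfrac{i-1}{n}\, p_i^{(n)} \;\leq\; a\, p_i^{(n)} \;\leq\; \tfrac{i}{n}\, p_i^{(n)},
\]
which follows from the defining inequalities for the spectral projections. Summing over $i$, and using $\sum_i p_i^{(n)} = e_0 = \mathrm{RP}(a)$ together with $a\,(1 - \mathrm{RP}(a)) = 0$, yields both $a_n \leq a$ and $0 \leq a - a_n \leq (1/n)\,1$. In particular $a_n \in \mathcal{A}^{1}$ and $a_n \to a$ in norm.

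Finally, to identify the supremum in $\mathcal{A}^{1}$: if $b \in \mathcal{A}^{1}$ is any upper bound of $\{a_n : n\}$, then $b - a_n \geq 0$ for every $n$, and norm-closedness of the positive cone of $\mathcal{A}$ forces $b - a \geq 0$. Hence $a = \sup_{\mathcal{A}^{1}}\{a_n : n\}$, and the lemma is proved. The main point requiring care is the very first one, namely the construction of the projections $e_t$ in $\mathcal{P}(\{a\}'')$; this relies on $\{a\}''$ being itself a (commutative) monotone complete $C^{*}$-algebra, after which the remainder of the argument is the routine Riemann lower-sum construction for a bounded positive element.
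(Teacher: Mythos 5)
Your proof is correct and follows essentially the same strategy as the paper: approximate $a$ from below inside $\{a\}''$ by a positive combination of mutually orthogonal projections with error at most $1/n$, then deduce the supremum from norm-closedness of the positive cone. The only difference is cosmetic — you produce the partition from the spectral family $e_t = \mathrm{RP}\bigl((a-t)^{+}\bigr)$ with a uniform subdivision of $[0,1]$, whereas the paper obtains it from a clopen cover of the Stonean spectrum of $\{a\}''$ via compactness.
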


\begin{proof}
	
It is enough to prove that for each $n$, there exist a finite sequence  of  mutually orthogonal projections $\{p_1,p_2,...,p_k\}\subseteq\mathcal{P}(\{a\}^{''})$, and $\{t_1,t_2,...,t_k\}\subset(0,1]$ such that $$0 \leq a- \sum_{i=1}^{k} t_ip_i \leq 1/n. $$	
	
 	We have $\{a\}^{''} = \mathcal{C(X)}$, where $\mathcal{X}$ is a Stonean space. Fix $n$ and suppose that $x\in\mathcal{X}$. Let $\mathcal{X}_x$ be the closure of the open set $\{y\in\mathcal{X}: |a(y)-a(x)|<\frac{1}{2n}\}$. Thus, $\{\mathcal{X}_x: x\in \mathcal{X}\}$ is a clopen cover of $\mathcal{X}$. Since $\mathcal{X}$ is a compact space, there exists a finite family $\{x_1,x_2,...,x_k\}\subseteq \mathcal{X}$ such that $\{\mathcal{X}_{x_1},\mathcal{X}_{x_2},...,\mathcal{X}_{x_k}\}$  forms a clopen cover of $\mathcal{X}$. Without loss of generality, we may suppose  that $\mathcal{X}_{x_i}$  are mutually disjoint. For each $i\in\{1,2,...,k\}$,  we denote $t_i:=\min\{a(y):y\in\mathcal{X}_{x_i}\}$ and  by   $p_i$  the characteristic function on $\mathcal{X}_{x_i}$. Then $0\leq a(x)- \underset{i=1}{\overset{k}{\sum}} t_ip_i(x) \leq 1/n$, for all $x\in\mathcal{X}$, as desired.
\end{proof}

As is common in preserver problems,  the $\mathrm{I}_2$ case is  special, and
it needs some additional work in comparison with other cases. In our situation, we rely heavily  on  \cite[Theorem 2.4]{Our}. To recall this theorem we need to define what we called   the property $\left(\mathrm{K}_{\mathcal{E}}\right)$. Let  $\mathcal{X}$ be a compact  Hausdorff  space. We say that $\mathcal{X}$  satisfies the property $\left(\mathrm{K}_{\mathcal{E}}\right)$ if every order automorphism on $\mathcal{C}(\mathcal{X},[0,1])$  is continuous, or equivalently, the formula \eqref{comm-form} holds for the entire space $\mathcal{X}$.

\begin{thm}\label{Abdelali.ElKhatiri}
(Abdelali,  El Khatiri) Let $\mathcal{H}$ be a Hilbert space with $\dim \mathcal{H}\geq 2$ and let  $\mathcal{X}$ be a Hausdorff compact space that satisfies  the property $\left(\mathrm{K}_{\mathcal{E}}\right)$. Suppose that $\Phi$ is an order automorphism   on $\mathcal{C}\left( \mathcal{X},\mathcal{B(H)}\right)^{1}$ such that $\Phi(1/2)=1/2$. Then $\Phi$    extends  uniquely  to a Jordan $^*$-isomorphism  on  $\mathcal{C}\left( \mathcal{X},\mathcal{B}(\mathcal{H})\right)$.
\end{thm}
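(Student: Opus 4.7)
Write $\mathcal{A}=\mathcal{C}(\mathcal{X},\mathcal{B}(\mathcal{H}))$ for brevity. My strategy is to first cut $\Phi$ down to its action on projections, then decompose the problem fiberwise via the center with the help of property $(\mathrm{K}_\mathcal{E})$, apply Molnár's description of order automorphisms of $\mathcal{B}(\mathcal{H})^{1}$ on each fiber, and finally glue the fibers to produce the Jordan $^*$-isomorphism.

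First I would show that $\Phi$ restricts to an orthoisomorphism of $\mathcal{P}(\mathcal{A})$. Since $\Phi(1/2)=1/2$, the projections of $\mathcal{A}$ are characterised order-theoretically (as those $p\in\mathcal{A}^{1}$ with $\mathrm{inf}_{\mathcal{A}^{1}}\{p,1-p\}=0$), so $\Phi$ bijects $\mathcal{P}(\mathcal{A})$ to itself; the $2\times 2$ computation of Lemma \ref{Homo} is a purely algebraic identity in a unital $C^{*}$-algebra, so the orthogonality criterion of Lemma \ref{orth} extends verbatim to $\mathcal{A}$. Central projections of $\mathcal{A}$ admit an order-theoretic description inside $\mathcal{P}(\mathcal{A})$ — they are precisely the $p$ satisfying $\mathrm{inf}_{\mathcal{P}(\mathcal{A})}\{p,q\}+\mathrm{inf}_{\mathcal{P}(\mathcal{A})}\{p,q^{\perp}\}=p$ for every $q\in\mathcal{P}(\mathcal{A})$ — so $\Phi$ carries central projections to central projections. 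The center of $\mathcal{A}$ is $\mathcal{C}(\mathcal{X})\cdot 1$ (using $\dim\mathcal{H}\ge 2$), and consequently $\Phi$ restricts to an order automorphism of $\mathcal{C}(\mathcal{X},[0,1])$; by property $(\mathrm{K}_\mathcal{E})$ this restriction is of the form \eqref{comm-form} globally on $\mathcal{X}$, with an associated homeomorphism $\mu\colon\mathcal{X}\to\mathcal{X}$ and a family of increasing homeomorphisms $(f_{y})_{y\in\mathcal{X}}$ of $[0,1]$ fixing $1/2$.

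Using $\mu$, for each $y\in\mathcal{X}$ I would define $\Phi_{y}\colon\mathcal{B}(\mathcal{H})^{1}\to\mathcal{B}(\mathcal{H})^{1}$ by $\Phi_{y}(a):=\Phi(\widehat{a})(\mu^{-1}(y))$, where $\widehat{a}\in\mathcal{A}$ denotes the constant function with value $a$. Preservation of central projections guarantees that $\Phi$ commutes with multiplication by characteristic functions of clopen subsets of $\mathcal{X}$, which makes $\Phi_{y}$ a well-defined order automorphism of $\mathcal{B}(\mathcal{H})^{1}$ fixing $1/2$; since $\dim\mathcal{H}\ge 2$, Molnár's theorem on order automorphisms of $\mathcal{B}(\mathcal{H})^{1}$ produces a unique Jordan $^{*}$-automorphism $J_{y}$ of $\mathcal{B}(\mathcal{H})$ whose restriction to the effect algebra is $\Phi_{y}$. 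I would then set
\[
J(f)(y):=J_{y}\bigl(f(\mu^{-1}(y))\bigr),\qquad f\in\mathcal{A},\ y\in\mathcal{X},
\]
and verify that $J$ is a Jordan $^{*}$-isomorphism of $\mathcal{A}$ extending $\Phi$.

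The algebraic identities defining a Jordan $^{*}$-isomorphism hold pointwise, so the genuinely hard step — the main obstacle — is continuity: one must check that $y\mapsto J_{y}(f(\mu^{-1}(y)))$ is norm-continuous, equivalently that $y\mapsto\Phi_{y}(a)$ varies continuously for each fixed $a\in\mathcal{B}(\mathcal{H})^{1}$. This is where property $(\mathrm{K}_\mathcal{E})$ pulls its weight: it rules out the discontinuous fiberwise behaviour which would otherwise obstruct the gluing. Concretely, for $p\in\mathcal{P}(\mathcal{A})$ the continuity of $y\mapsto J_{y}(p(\mu^{-1}(y)))$ is immediate from continuity of $\Phi(p)$; Lemma \ref{finite.sum} then approximates an arbitrary $a\in\mathcal{B}(\mathcal{H})^{1}$ by sums $\sum t_{i}p_{i}$ of mutually orthogonal projections in $\{a\}''$, and property $(\mathrm{K}_\mathcal{E})$ applied to the commutative subalgebra generated by the $p_{i}$ lets one propagate continuity from projections to the full effect algebra, and finally — by Jordan linearity — to all of $\mathcal{B}(\mathcal{H})$. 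Uniqueness of the extension is then automatic, since any Jordan $^{*}$-isomorphism agreeing with $\Phi$ on $\mathcal{A}^{1}$ is determined by these values.
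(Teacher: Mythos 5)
First, a structural remark: the paper does not prove this statement. It is quoted, with attribution, from Abdelali--El~Khatiri \cite[Theorem 2.4]{Our}, and the only in-paper commentary on its proof is the paragraph immediately following it, which says that property $(\mathrm{K}_{\mathcal{E}})$ is used there to show that $\Phi$ restricted to $\{a:0\le a\le p\}$ is continuous for every \emph{faithful abelian} projection $p$ of $\mathcal{C}(\mathcal{X},\mathcal{B}(\mathcal{H}))$ (the pointwise rank-one projection-valued functions, not the central projections), after which a Moln\'ar--\v{S}emrl-type argument runs. So there is no internal proof to match your proposal against; judged on its own terms, however, your fiberwise scheme has genuine gaps.

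The main one is the reduction to the maps $\Phi_y$. The assignment $a\mapsto\Phi(\widehat a)(\mu^{-1}(y))$ is order-preserving, but nothing in your argument makes it bijective or order-reflecting: an inequality between $\Phi(\widehat a)$ and $\Phi(\widehat b)$ at the single point $\mu^{-1}(y)$ carries no information back about $a$ and $b$, and surjectivity would require every element of $\mathcal{B}(\mathcal{H})^{1}$ to be attained at that point by the image of a \emph{constant} function. The mechanism you invoke --- that $\Phi$ commutes with multiplication by characteristic functions of clopen sets --- is vacuous when $\mathcal{X}$ is connected (e.g.\ $\mathcal{X}=[0,1]$, which satisfies $(\mathrm{K}_{\mathcal{E}})$ and is within the scope of the theorem): there $\mathcal{C}(\mathcal{X},\mathcal{B}(\mathcal{H}))$ has no nontrivial central projections, so no localization at a point is available. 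The second gap is the gluing: the identity $\Phi(f)(y)=J_y\bigl(f(\mu^{-1}(y))\bigr)$ is only meaningful, by your construction, for constant $f$, yet your continuity argument (``continuity of $y\mapsto J_y(p(\mu^{-1}(y)))$ is immediate from continuity of $\Phi(p)$'') silently applies it to arbitrary $f$. That $\Phi$ acts locally --- that $\Phi(f)(y)$ depends only on the value $f(\mu^{-1}(y))$ --- is essentially the entire content of the theorem and is assumed rather than proved. A smaller issue: preservation of central projections does not by itself show that $\Phi$ maps the centre's effect algebra onto itself; even the scalar identity $\Phi(t)=t$ requires dedicated work in this setting (compare the computations with Lemma \ref{Homo} inside the paper's proof of Theorem \ref{Jordan}). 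The route that actually works, per \cite{Our}, localizes along faithful abelian projections rather than along points of $\mathcal{X}$, and uses $(\mathrm{K}_{\mathcal{E}})$ precisely to obtain continuity of $\Phi$ on the intervals $[0,p]_{\mathcal{A}}$ below such projections.
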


The authors  used the  property $\left(\mathrm{K}_{\mathcal{E}}\right)$ to guarantee that for any faithful (i.e., its central cover is $1$)  abelian projection $p \in \mathcal{P}\left(\mathcal{C}(\mathcal{X},\mathcal{B}(\mathcal{H}))\right)$, the restriction of $\Phi$ to $\{a \in \mathcal{C}(\mathcal{X},\mathcal{B}(\mathcal{H}))^1: 0 \leq a \leq p\}$ is continuous, see \cite[Section 4.1]{Our}. So, if $\mathcal{X}$ does not necessarily satisfy the property $\left(\mathrm{K}_{\mathcal{E}}\right)$, and we could prove, in some way, that the restriction of $\Phi$ to $\{a \in \mathcal{C}(\mathcal{X},\mathcal{B(H)})^1: 0 \leq a \leq p\}$ is continuous for all faithful  abelian projection $p$ in  $\mathcal{P}\left(\mathcal{C}(\mathcal{X},\mathcal{B}(\mathcal{H}))\right)$, then by repeating exactly the same argument, we can prove that $\Phi$ extends  uniquely to a  Jordan $^*$-isomorphism on $\mathcal{C}\left(\mathcal{X}, \mathcal{B}(\mathcal{H})\right)$.

We are now ready to prove one of the main results of this section. However, before we do so, let us recall some notation that will be useful for shortening the proofs. For any order isomorphism $\Phi: \mathcal{A}^1 \to \mathcal{B}^1$, we define the order isomorphism $\Phi^{\perp}: \mathcal{A}^1 \to \mathcal{B}^1$ by
$$a \mapsto 1 - \Phi(1-a), \qquad a\in \mathcal{A}^1.$$

\begin{thm}\label{Jordan}
Let $\mathcal{A}$ and $\mathcal{B}$ be two $AW^*$-algebras. Assume that $\mathcal{A}$ does not have a type~$\mathrm{I}_1$ direct summand. Suppose that $\Phi:\mathcal{A}^1\to\mathcal{B}^1$ is an order isomorphism with $\Phi(1/2)=1/2$. Then $\Phi$ extends  uniquely to a  Jordan $^*$-isomorphism from $\mathcal{A}$ onto $\mathcal{B}$.   
\end{thm}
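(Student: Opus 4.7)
The plan is to split $\mathcal{A}$ into its type $\mathrm{I}_2$ direct summand and its complement, treat each piece by a different theorem, and glue the results. By Corollary \ref{orthoisomorphism}, the restriction of $\Phi$ to $\mathcal{P(A)}$ is an orthoisomorphism onto $\mathcal{P(B)}$. Centrality of a projection is purely lattice-theoretic (a projection $p$ is central iff every $q\in\mathcal{P}$ satisfies $q=(q\wedge p)\vee(q\wedge p^\perp)$), hence preserved by orthoisomorphisms. Setting $e:=p_\mathcal{A}(2)$ and $f:=\Phi(e)$, both are central in their respective algebras. Since $\mathcal{A}^1$ is a complete lattice and $e$ is central, one checks $\mathrm{inf}_{\mathcal{A}^1}\{a,e\}=ea$ for every $a\in\mathcal{A}^1$; applying $\Phi$ and using the centrality of $f$ yields $\Phi(ea)=f\Phi(a)$ and $\Phi(e^\perp a)=f^\perp\Phi(a)$. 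Thus $\Phi$ decomposes as $\Phi=\Phi_2\oplus\Phi_{-2}$ along the central direct sums $\mathcal{A}=e\mathcal{A}\oplus e^\perp\mathcal{A}$ and $\mathcal{B}=f\mathcal{B}\oplus f^\perp\mathcal{B}$, with both $\Phi_2,\Phi_{-2}$ order isomorphisms of effect algebras sending $1/2$ to $1/2$.

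For $\Phi_{-2}$ on the non-type-$\mathrm{I}_2$ part: $e^\perp\mathcal{A}$ has neither a type $\mathrm{I}_1$ summand (by hypothesis) nor a type $\mathrm{I}_2$ summand (by construction), and the type decomposition transfers to $f^\perp\mathcal{B}$ via the projection-lattice orthoisomorphism. Dye's theorem (\cite[Theorem 4.3]{Hamhalter}) then extends $\Phi_{-2}|_{\mathcal{P}(e^\perp\mathcal{A})}$ uniquely to a Jordan $^{*}$-isomorphism $J_{-2}:e^\perp\mathcal{A}\to f^\perp\mathcal{B}$. To verify $\Phi_{-2}=J_{-2}|_{(e^\perp\mathcal{A})^1}$, I invoke Lemma \ref{finite.sum}: every $a\in(e^\perp\mathcal{A})^1$ is the supremum of elements $\sum_i t_i p_i$ with orthogonal $p_i\in\{a\}''$ and $t_i\in(0,1]$. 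Since both $\Phi_{-2}$ and $J_{-2}$ preserve suprema in the complete lattice $(e^\perp\mathcal{A})^1$, it suffices to establish agreement on such finite combinations. Refining to dyadic $t_i$ and exploiting the absence of a type $\mathrm{I}_1$ summand (which lets us halve any projection $p$ into two equivalent sub-projections), one produces a $2\times 2$ matrix subalgebra $\mathrm{M}_2(\mathcal{C})\subseteq p\mathcal{A}p$ in which Lemma \ref{Homo} yields an order-theoretic formula for $tp$ purely in terms of projections and lattice operations. Sums over mutually orthogonal projections are recovered from the compressions $p_i(\sum_j t_j p_j)p_i=t_ip_i$, each characterizable as an infimum with $p_i$.

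For $\Phi_2$ on the type $\mathrm{I}_2$ part: the structure theory identifies $e\mathcal{A}\cong\mathcal{C}(\mathcal{X},\mathrm{M}_2(\mathbb{C}))$ and $f\mathcal{B}\cong\mathcal{C}(\mathcal{Y},\mathrm{M}_2(\mathbb{C}))$ for Stonean spaces $\mathcal{X},\mathcal{Y}$ (the spectra of the centers). The restriction of $\Phi_2$ to the projection lattices of the centers is a Boolean isomorphism, inducing by Stone duality a homeomorphism $\mathcal{Y}\to\mathcal{X}$; identifying $\mathcal{Y}$ with $\mathcal{X}$ turns $\Phi_2$ into an order automorphism of $\mathcal{C}(\mathcal{X},\mathrm{M}_2(\mathbb{C}))^1$ fixing $1/2$. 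Theorem \ref{Abdelali.ElKhatiri} delivers the desired Jordan $^{*}$-automorphism $J_2$ provided property $(\mathrm{K}_{\mathcal{E}})$ holds on $\mathcal{X}$; since this is not automatic for arbitrary Stonean $\mathcal{X}$, I follow the remark after Theorem \ref{Abdelali.ElKhatiri} and bypass the hypothesis by establishing directly the continuity of the restriction of $\Phi_2$ to $[0,p]$ for every faithful abelian projection $p$. This continuity is deduced from the monotone completeness of the $AW^{*}$-structure combined with the pointwise description of Proposition \ref{comm_case}: an exceptional point outside the dense $G_\delta$ on which the explicit formula holds would obstruct the preservation of suprema of increasing nets that $\Phi_2$ enjoys as an order isomorphism of the complete lattice of effects in a monotone complete algebra.

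The main obstacle is Step 3, namely replacing property $(\mathrm{K}_{\mathcal{E}})$ by a net-theoretic monotone-continuity argument that genuinely uses the $AW^{*}$-structure; nets (not merely sequences) are essential, since Stonean spaces are typically non-metrizable. A secondary technical hurdle is in Step 2, in the lattice-theoretic recovery of $\sum_it_ip_i$ via iterated application of Lemma \ref{Homo} inside $2\times 2$ matrix subalgebras. Once $J_2$ and $J_{-2}$ are assembled into $J:\mathcal{A}\to\mathcal{B}$, uniqueness of the extension follows because $\mathcal{A}^{sa}$ is the real linear span of $\mathcal{A}^1$, so any Jordan $^{*}$-isomorphism is determined by its values on $\mathcal{A}^1$.
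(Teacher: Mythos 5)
Your overall architecture matches the paper's: split along the central projection $p_\mathcal{A}(2)$, handle the non-$\mathrm{I}_2$ part via Dye's theorem plus a lattice-theoretic recovery of $tp$ through Lemma \ref{Homo}, and reduce the $\mathrm{I}_2$ part to Theorem \ref{Abdelali.ElKhatiri}. But there is a genuine gap in your treatment of the type $\mathrm{I}_2$ summand. You propose to obtain continuity of the restriction of $\Phi_2$ to $[0,p]$ (for $p$ a faithful abelian projection) from ``preservation of suprema of increasing nets,'' arguing that an exceptional point outside the dense $G_\delta$ of Proposition \ref{comm_case} would obstruct sup-preservation. This argument is vacuous: \emph{every} order isomorphism automatically preserves whatever suprema exist, yet for a general compact Hausdorff base space there do exist discontinuous order automorphisms of $\mathcal{C}(\mathcal{X},[0,1])$ --- that is precisely why property $\left(\mathrm{K}_{\mathcal{E}}\right)$ is a nontrivial hypothesis in Theorem \ref{Abdelali.ElKhatiri}. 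Sup-preservation alone cannot rule out wild fiber homeomorphisms $f_y$. What is actually needed, and what the paper supplies, is the normalization $\Phi_2(t)=t$ for every scalar $t\in[0,1]$: this is obtained by using Lemma \ref{Homo} to characterize order-theoretically the projections $q$ with $f_q=1/2$, deducing $\Phi_2(2/3)=2/3$ and $\Phi_2(1/3)=1/3$, and iterating on the subintervals $[0,2/3]_{\mathcal{A}}$ and $[1/3,1]_{\mathcal{A}}$ to reach a dense set of scalars. Only then does Proposition \ref{comm_case} force each $f_y$ to be the identity, making $\Phi_p(r)=r\circ\mu_p$ a composition operator, hence continuous. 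Without this step your proof of the $\mathrm{I}_2$ case does not close.

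Two secondary inaccuracies in your non-$\mathrm{I}_2$ step should also be repaired, though they are not fatal. First, $\mathcal{A}^1$ is \emph{not} a complete lattice for noncommutative $\mathcal{A}$ (Kadison's anti-lattice phenomenon); you may only use the specific infima and suprema that are shown to exist, as the paper does. Second, the absence of a type $\mathrm{I}_1$ summand does not let you halve an \emph{arbitrary} projection (abelian projections in a type $\mathrm{I}_n$ summand cannot be halved); the paper instead produces a family of mutually orthogonal projections $\{p_i\}$ with supremum $1$ and $p_i\preceq p_i^\perp$, applies the $\mathrm{M}_2$ identification and Lemma \ref{Homo} to those, recovers $\Psi(t)=t$ for scalars by taking suprema over the family, and only then deduces $\Psi(tp)=tp$ for all projections $p$ via $tp=\mathrm{inf}_{\mathcal{A}^1}\{t,p\}$. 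Your ``refine to dyadic $t_i$'' detour is unnecessary: Lemma \ref{Homo} handles all $t\in[1/2,1]$ directly, and $\Psi^\perp$ covers $[0,1/2]$.
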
  
\begin{proof}
Notice  that  a projection $p\in \mathcal{P(A)}$ is central if, and only if, it has a unique complement, i.e., there exists a unique projection  $p'\in\mathcal{P(A)}$ such that $\mathrm{inf}_{\mathcal{P(A)}}\{p,p'\}=0$ and $\mathrm{sup}_{\mathcal{P(A)}}\{p,p'\}=1$; see \cite[Theorem 6.6]{Kaplansky}. Notice that   any projection  $p\in\mathcal{P(A)}$ is abelian if, and only if, all projections   $p'\leq p$ are central in $p\mathcal{A}p$, i.e., there exists a unique projection $q'$ such that $\mathrm{sup}_\mathcal{P(A)}\{p',q'\}=p$ and $\mathrm{inf}_\mathcal{P(A)}\{p',q'\}=0$. 

For each  positive integer $n$,  $p_\mathcal{A}(n)$	 is  the  largest  central projection    such that there exist $n$  mutually orthogonal, abelian projections $p_1,p_2,...,p_n$     having  $p_\mathcal{A}(n)$ as a common central cover, i.e., for each $i$, $p_\mathcal{A}(n)$ is the least  upper-bound    central projection such that $p_\mathcal{A}(n)\geq p_i$,  or equivalently, $p_i\sim p_j,$ for all $i,j$, see \cite[Proposition 1, p. 33;   Remark 5, Proposition 1, p. 111 ]{Berberian}, such that $$\mathrm{sup}_\mathcal{P(A)}\{p_1,p_2,...,p_n\}=p.$$

Since $\Phi$ is an order isomorphism preserves the orthogonality between the projections according to Corollary \ref{orthoisomorphism}, we obtain that $\Phi(p_\mathcal{A}(n))=p_\mathcal{B}(n)$, for all positive integer $n$. 

Since $\mathcal{A}$ does not have  an $\mathrm{I}_1$ direct summand, from the above discussion,  $\mathcal{B}$ also does not  have an $\mathrm{I}_1$ direct summand and $\Phi(p_\mathcal{A}(2))=p_\mathcal{B}(2) $.  Following arguments similar to those used in the proof of \cite[Lemma 3.4]{mori} or in the proof of Theorem \ref{I_1} in the next section, we  can show that the maps $p_\mathcal{A}(2)\mathcal{A}^1\to p_\mathcal{B}(2)\mathcal{B}^1$ and $p_\mathcal{A}(2)^{\perp}\mathcal{A}^1\to p_\mathcal{B}(2)^\perp\mathcal{B}^1$ given by $p_\mathcal{A}(2)a\mapsto p_\mathcal{B}(2)\Phi(a)$ and $p_\mathcal{A}(2)^\perp a\mapsto p_\mathcal{B}(2)^\perp\Phi(a)$, respectively, are order isomorphisms. Thus,  to prove our theorem,  it  suffices to treat each of the following  two cases separately: when $\mathcal{A}$ and $\mathcal{B}$ do not have $\mathrm{I}_1$ nor $\mathrm{I}_2$ direct summands, and when $\mathcal{A}$ and $\mathcal{B}$ are of type~$\mathrm{I}_2$.	

We first  consider the case when $\mathcal{A}$ and $\mathcal{B}$ do not have $\mathrm{I}_1$ nor $\mathrm{I}_2$ direct summands. By  Corollary \ref{orthoisomorphism} and \cite[Theorem 4.3]{Hamhalter}, there exists a Jordan $^*$-isomorphism $J:\mathcal{A}\to\mathcal{B}$ such that    $\Phi(p)=J(p)$, for all $p\in\mathcal{P(A)}$.  Denote $\Psi:=J^{-1}\circ \Phi:\mathcal{A}^1\to\mathcal{A}^1$. It is easy to see that $\Psi$ is an order automorphism  with $$\Psi(a)=a, \qquad  a\in\mathcal{P(A)}\cup \{1/2\}.$$	
We will prove that $\Psi$  	is the identity map on $\mathcal{A}^1$.  From \cite[Lemma 4.12]{Kaplansky} and \cite[Theorem 4, p. 116]{Berberian} we conclude the existence of a mutually orthogonal projections $\{p_i:i\in \mathcal{I}\}$ such that $\mathrm{sup}_{\mathcal{P(A)}}\{p_i:i\in \mathcal{I}\}=1$ and $p_i\preceq p_i^\perp$, for all $i\in \mathcal{I}$. Let $p\in   \{p_i:i\in \mathcal{I}\}$, then $p\sim p'$, for some projection $p'\leq p^\perp$. Thus we can identify $(p+p')\mathcal{A}(p+p')$ with $\mathrm{M}_2(p\mathcal{A}p)$ such that  $$p=\begin{pmatrix}
	1 & 0\\
	0 & 0
\end{pmatrix};$$
see \cite[Proposition 1, p. 98]{Berberian}.
Let $t\in[1/2,1]$. By Lemma \ref{Homo}, we have  $tp=\mathrm{inf}_{\mathcal{A}^1}\{p,q+1/2q^\perp\}$ where $$q=1/t\begin{pmatrix}
	2t-1 & 	\sqrt{(2t-1)(  1-t)}\\
	\sqrt{(2t-1)( 1-t)} & 1-t
\end{pmatrix}.$$
Hence, $\Psi(tp)=tp$.  Since  $\Psi^\perp$  and $\Psi$ have the same  properties we obtain that for every $t\in[0,1/2]$ $$\begin{array}{lllll}
	(1-t)p	&=&\Psi^\perp((1-t)p)\\\\
	&=& 1-\Psi \left(p^\perp+tp\right) \\ \\
	&=& 1-\Psi\left( \mathrm{sup}_{\mathcal{A}^1}\left\lbrace p^{\perp},tp\right\rbrace \right) \\ \\
	&=&1-\mathrm{sup}_{\mathcal{A}^1}\left\lbrace \Psi(p^{\perp}),\Psi(tp) \right\rbrace  \\ \\
	&=&p-\Psi(tp).
\end{array}$$
Thus $\Psi(tp)=tp$, for all $t\in[0,1]$ and all $p\in \{p_i:i\in \mathcal{I}\}$. So $\Psi(t)=\Psi\left( \mathrm{sup}_{\mathcal{A}^1}\{tp_i:i\in \mathcal{I}\}\right)=t $, for all $t\in [0,1]$ and then $\Psi(tp)=tp$, for all $t\in[0,1]$ and all $p\in \mathcal{P(A)}$. 

We will prove by induction that $\Psi\left( \underset{i=1}{\overset{n}{\sum}} t_ip_i\right)= \underset{i=1}{\overset{n}{\sum}} t_ip_i$, for all  of all finite family of mutually orthogonal projections  $\{p_1,p_2,...,p_n\}$ in $\mathcal{A}$ that forms a partition of identity, and for all coefficients  $0\leq t_1<t_2...<t_n\leq1$.  This is true
for $n=1$.  Suppose that we have established it for $n\geq 1$, and let $a=\Psi\left( \underset{i=1}{\overset{n+1}{\sum}} t_ip_i\right)$. Using the induction hypothesis, we have $\underset{i=1}{\overset{n-1}{\sum}} t_ip_i\leq a\leq \underset{i=1}{\overset{n-1}{\sum}} t_ip_i+ p_n+p_{n+1}$. By \cite[Lemma 3.2]{Semrl},  $a=\underset{i=1}{\overset{n-1}{\sum}} t_ip_i+a(p_n+p_{n+1})$. Observe that  $t_{n+1}p_{n+1}\leq a\leq t_{n+1}$. Similarly, we deduce  that $ap_{n+1}=t_{n+1}p_{n+1}$. Moreover, $a$ commutes with $p_n$, we deduce that $t_np_n=\inf_{\mathcal{A}^1}\{p_n,a\}=ap_n$. Hence, $a=\underset{i=1}{\overset{n+1}{\sum}} t_ip_i$.
By Lemma \ref{finite.sum}, we derive that $a\leq \Psi(a)$, for all $a\in\mathcal{A}^1$. Since $\Psi$ and $\Psi^{-1}$ have the same properties we conclude that   $\Psi$ is the identity map on the whole effect algebra $\mathcal{A}^1$.

Now, consider the case when $\mathcal{A}$ and $\mathcal{B}$ are of type~$\mathrm{I}_2$.  We can identify  $\mathcal{A}$ with $\mathcal{C(X},\mathrm{M}_2(\mathbb{C}))$ and $\mathcal{B}$ with $\mathcal{C(Y},\mathrm{M}_2(\mathbb{C}))$, where $\mathcal{X}$ and $\mathcal{Y}$ are two Stonean spaces. Since $\mathcal{X}$ and $\mathcal{Y}$ are homeomorphic, we can suppose without loss of generality that $\mathcal{X}=\mathcal{Y}$; see \cite[Theorem 2.4 and  Section 5]{Our}.  For any  faithful abelian projection $q$ in $\mathcal{C(X},\mathrm{M}_2(\mathbb{C}))$ there exists  $f_q\in\mathcal{C(X},\mathbb{C})^{1}$ and $g_q\in \mathcal{C(X},\mathbb{C})$ with  $|g_q(x)|=\sqrt{f_q(x)(1-f_q(x))}$, for all $x\in\mathcal{X}$ such that $$q(x)=\begin{pmatrix}
	f_q(x) & g_q(x)\\
	g_q(x)^* & 1-f_q(x)
\end{pmatrix}, \qquad x\in\mathcal{X}.$$
By \cite[Theorem 2.4]{Our}, $\Phi$ preserves the faithful  abelian projections in both directions. Let $p$ be a faithful   abelian projection in $\mathcal{C(X},\mathrm{M}_2(\mathbb{C}))$. Without loss of generality,   we may represent  $p$ and $\Phi(p)$   by the function  $$x\mapsto \begin{pmatrix}
	1 & 	0\\
	0 & 0
\end{pmatrix}, \qquad x\in\mathcal{X}.$$  
By  Lemma \ref{Homo}, we have  $$\begin{array}{lll}
	2/3p=\mathrm{inf}_{\mathcal{A}^1}\{p,q+1/2q^\perp\}&\iff& \mathrm{inf}_{\mathcal{A}^1}\{p,q+1/2q^\perp\}=\mathrm{inf}_{\mathcal{A}^1}\{p,q^\perp+1/2q\} \\\\
	&\iff& 2/3p^\perp=\mathrm{inf}_{\mathcal{A}^1}\{p^\perp,q+1/2q^\perp\}\\\\
	&\iff& f_q=1/2.
\end{array}$$
Recall that $\Phi(p)=p$, thus $\Phi(p^{\perp})=p^{\perp}$. One can  take  $q$ the projection   defined by $$q(x):= 1/2\begin{pmatrix}
	1 &  1\\
	1 & 1
\end{pmatrix}, \qquad x\in\mathcal{X},$$  to derive that $\Phi(2/3p)=2/3p$ and $\Phi(2/3p^\perp)=2/3p^\perp$.  Observe that $p+2/3p^\perp=\sup_{\mathcal{A}^1}\{p,2/3p^\perp\}$, then, $\Phi(p+2/3p^\perp)=p+2/3p^\perp$. By the same way we prove that $\Phi(p^\perp+2/3p)=p^\perp+2/3p$. Note  that $2/3$ is the unique element in $\mathcal{A}^1$ that satisfies $2/3p\leq 2/3\leq p^\perp+2/3p$ and  $2/3p^\perp\leq 2/3\leq p+2/3p^\perp$.  Consequently, $\Phi(2/3)=2/3$. Since $\Phi^\perp$ and $\Phi$ have the same properties, we conclude that $\Phi(1/3)=1-\Phi^\perp(2/3)=1/3$.

To prove that $\Phi(t) = t$ for all $t \in [0,1]$, we adapt the methodology outlined in \cite[Proof of Theorem 3.8]{mori}. By applying the reasoning from the preceding discussion to the restrictions of $\Phi$ on the subintervals $[0, 2/3]_{\mathcal{A}}$ and $[1/3, 1]_{\mathcal{A}}$, and iterating this process indefinitely, we demonstrate that $\Phi(t) = t$ holds for every $t$ in a dense subset of $[0,1]$.  Since $\Phi$  is an order isomorphism, we obtain that \begin{equation}\label{Phi(t)=t}
	\Phi(t)=t,  \qquad t\in[0,1]
\end{equation}

Let $p\in\mathcal{C(X},\mathrm{M}_2(\mathbb{C}))$ be a faithful abelian projection, the map $\Phi_p$ defined on $\mathcal{C(X},[0,1])$ by $$\Phi_p(r)(x):=\|\Phi(rp)(x)\|, \qquad x\in\mathcal{X}, r\in\mathcal{C(X},[0,1]),$$
is an order automorphism; see \cite[the proof of Corollary 3.7]{Our}. By \eqref{Phi(t)=t}, we conclude that $\Phi_p(t)=t$, for all $t\in[0,1]$. By Proposition \ref{comm_case},  there exists a homeomorphism $\mu_p$ of $\mathcal{X}$ such that $\Phi_p(r)(x)=r(\mu_p(x))$, for every $x$ in dense subset of $\mathcal{X}$. Hence, $\Phi_p(r)=r\circ\mu_p$. Therefore, the maps $\{\Phi_p: p \mbox{ faithful abelian  projection}\}$ are continuous. By Theorem \ref{Abdelali.ElKhatiri}, $\Phi$ is extendible to a to unique Jordan $^*$-isomorphism on $\mathcal{C(X},\mathrm{M}_2(\mathbb{C}))$.  This completes
the proof.
\end{proof}

In order to address a more general case, we will use   results from the  noncommutative integration theory. Specifically, we will depend  on Berberian's results   on the regular ring of  finite $AW^*$-algebra; see \cite{Berberian, Berberian1}.

Let $\mathcal{A}$ be an $AW^*$-algebra (resp. finite  $AW^*$-algebra). Let $T\in\mathcal{A}$ (resp. in $\mathcal{R(A)}$)  be a positive invertible element. Using \cite[Lemma 8.12]{Berberian2}, we conclude that the map $\Phi_T$ defined  by	
\begin{equation}\label{Phi_T}
\Phi_T(a):=(1+T^{-2})^{1/2}\left(  1-(1+TaT)^{-1}\right) (1+T^{-2})^{1/2},	
\end{equation}
for all $a\in\mathcal{A}^1$, is an order automorphism on $\mathcal{A}^1$. By a simple computation we obtain  that 
\begin{equation}\label{inverse_Phi_T}
\Phi_T^{-1}(a)=T^{-1}\left(\left(   1-(1+T^{-2})^{-1/2}a(1+T^{-2})^{-1/2}\right)^{-1} -1\right) T^{-1},	
\end{equation}
for all $a\in\mathcal{A}^1$. In particular, for every   $\alpha\in[0,1]$, we have
\begin{equation}\label{Phi(alpha)}
\Phi_T(\alpha)=\alpha (1+T^{2})(1+\alpha T^2)^{-1} \quad \mbox{and} \quad \Phi_T^{-1}(\alpha)=\alpha \left( (1-\alpha)T^2+1\right)^{-1}. 	
\end{equation}
Furthermore, for every $a\in\mathcal{A}^1$ and every  non-zero real number $\alpha$, we have 
\begin{equation}\label{Phi_alpha}
	\Phi_\alpha(a)=(1+\alpha^{2})a(1+\alpha^2a)^{-1} \quad \mbox{and} \quad \Phi_\alpha^{-1}(a)=a \left(1+\alpha^2(1-a)\right)^{-1}.	
\end{equation}

The following theorem, motivated by \cite[Theorem 2.3]{Semrl}, shows that,  relying on the formulas \eqref{Phi_T} and \eqref {inverse_Phi_T}, one can describe a large class of order isomorphisms between effect algebras of   $AW^*$-algebras.

\begin{thm}\label{general_formula}
Let $\mathcal{A}$ and $\mathcal{B}$ be two $AW^*$-algebras. Assume that $\mathcal{A}$ does not have a type~$\mathrm{I}_1$ direct summand and  $\Phi:\mathcal{A}^1\to \mathcal{B}^1$  is a map. The following assertions are equivalent:

\begin{enumerate}
	\item  The map $\Phi$ is an order isomorphism with $\Phi(1/2)$ and $1-\Phi(1/2)$ are invertible in $\mathcal{B}$.
	\item  There exist a  Jordan $^*$-isomorphism $J:\mathcal{A}\to\mathcal{B}$,      $T\in\mathcal{B}^{++}$,  and a positive  real number  $\alpha$ such that $$\Phi=\Phi_\alpha^{-1}\circ\Phi_{T}\circ J\arrowvert_{\mathcal{A}^1}.$$
\end{enumerate} 
Moreover, if  $\mathcal{A}$ and $\mathcal{B}$ are finite, then $\Phi$ is an order isomorphism if, and only if, there exist a  Jordan $^*$-isomorphism $J:\mathcal{A}\to\mathcal{B}$, and two invertible closed operators $T\in\mathcal{R(A)}$, $S\in\mathcal{R(B)}$ such that $$\Phi=\Phi_{S}^{-1}\circ J \circ\Phi_{T}.$$
\end{thm}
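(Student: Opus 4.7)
The proof has three parts: the easy direction (2)$\Rightarrow$(1), its converse (1)$\Rightarrow$(2), and the ``Moreover'' equivalence for finite algebras.

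For (2)$\Rightarrow$(1), each factor $J|_{\mathcal{A}^1}$, $\Phi_T$, and $\Phi_\alpha^{-1}$ is an order isomorphism between the relevant effect algebras (the middle factor by \cite[Lemma 8.12]{Berberian2}), so the composition is too. To see that $\Phi(1/2)$ and $1-\Phi(1/2)$ are invertible in $\mathcal{B}$, substitute $\alpha=1/2$ into the formulas of \eqref{Phi(alpha)} to get $\Phi_T(1/2)=1-(2+T^2)^{-1}$, both of whose parts lie in $\mathcal{B}^{++}$, and then use the identities $\Phi_\alpha^{-1}(b)(1+\alpha^2(1-b))=b$ and $(1-\Phi_\alpha^{-1}(b))(1+\alpha^2(1-b))=(1+\alpha^2)(1-b)$ to see that $\Phi_\alpha^{-1}$ preserves invertibility of both $b$ and $1-b$.

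For (1)$\Rightarrow$(2), set $b_0:=\Phi(1/2)$, so $b_0,1-b_0\in\mathcal{B}^{++}$. The strategy is to choose $\alpha>0$ and $T\in\mathcal{B}^{++}$ so that $\Psi:=\Phi_T^{-1}\circ\Phi_\alpha\circ\Phi$ sends $1/2$ to $1/2$, then invoke Theorem \ref{Jordan}. A direct computation via \eqref{Phi_alpha} gives
$$\Phi_\alpha(b_0)-1/2 = \tfrac12\bigl(b_0(2+\alpha^2)-1\bigr)(1+\alpha^2 b_0)^{-1}, \qquad 1-\Phi_\alpha(b_0) = (1-b_0)(1+\alpha^2 b_0)^{-1}.$$
Since $b_0\geq \epsilon\cdot 1$ for some $\epsilon>0$, choosing $\alpha$ large makes $c:=\Phi_\alpha(b_0)$ satisfy $c-1/2, 1-c\in\mathcal{B}^{++}$; then $T:=\bigl((1-c)^{-1}-2\bigr)^{1/2}\in\mathcal{B}^{++}$ yields $\Phi_T(1/2)=c$, so $\Psi(1/2)=1/2$. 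Theorem \ref{Jordan} then produces the Jordan $^*$-isomorphism $J:\mathcal{A}\to\mathcal{B}$ extending $\Psi$, and inverting gives $\Phi=\Phi_\alpha^{-1}\circ\Phi_T\circ J|_{\mathcal{A}^1}$.

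For the ``Moreover'' equivalence, the easy direction follows the same template once one verifies that $\Phi_T$ remains an order automorphism of $\mathcal{A}^1$ for $T\in\mathcal{R(A)}^{++}$: the equivalence $\Phi_T(a)\leq 1\iff a\leq 1$ in $\mathcal{R(A)}$, combined with \cite[Theorem 5.1]{Berberian1}, keeps $\Phi_T(a)$ in $\mathcal{A}^1$. For the forward direction I again seek $T\in\mathcal{R(A)}^{++}$ and $S\in\mathcal{R(B)}^{++}$ so that $\Psi:=\Phi_S\circ\Phi\circ\Phi_T^{-1}$ fixes $1/2$ and then apply Theorem \ref{Jordan}. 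Since $\Phi_T^{-1}(1/2)=(T^2+2)^{-1}$ parametrizes exactly the $a\in\mathcal{A}^1$ for which both $a$ and $1/2-a$ are positive invertible in $\mathcal{R(A)}$, the entire problem reduces to producing a single $a\in\mathcal{A}^1$ such that $a,\, 1/2-a\in\mathcal{R(A)}^{++}$ \emph{and} $\Phi(a),\, 1/2-\Phi(a)\in\mathcal{R(B)}^{++}$.

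The production of such $a$ is the main obstacle, and this is where Lemma \ref{inverse} does the heavy lifting. Set $a_0:=\Phi^{-1}(1/2)\in\mathcal{A}^1$; applying Lemma \ref{inverse} to the commuting pair $(0,1/2)$ in $\mathcal{B}$ and transferring its characterization through the order isomorphism $\Phi^{-1}$ shows that $a_0$ is positive invertible in $\mathcal{R(A)}$. Then take $a:=t(1/2\wedge a_0)$ for any fixed $t\in(0,1)$, where $1/2\wedge a_0$ denotes the infimum computed inside the commutative $AW^*$-subalgebra $\{a_0\}''$. This $a$ commutes with $a_0$, lies in $[0,t/2]\subset\mathcal{A}^1$, and has $\mathrm{RP}(a)=\mathrm{RP}(1/2\wedge a_0)=\mathrm{RP}(a_0)=1$, so both $a$ and $1/2-a$ lie in $\mathcal{R(A)}^{++}$. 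Because order isomorphisms preserve infima of commuting pairs, $\Phi(a)\leq \Phi(1/2\wedge a_0)=\Phi(1/2)\wedge \Phi(a_0)=b_0\wedge 1/2\leq 1/2$, so Lemma \ref{inverse} applies to the commuting pair $(\Phi(a),1/2)$ in $\mathcal{B}$. Transferring its two order conditions back through $\Phi$ yields the analogous conditions on the commuting pair $(a,a_0)$ in $\mathcal{A}$; a second application of Lemma \ref{inverse} in $\mathcal{A}$ then shows that these are jointly equivalent to $a_0-a$ being positive invertible in $\mathcal{R(A)}$. But the decomposition $a_0-a=(1-t)a_0+t(a_0-1/2\wedge a_0)$ exhibits $a_0-a$ as the sum of $(1-t)a_0\in\mathcal{R(A)}^{++}$ and a positive element, which is positive invertible. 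With such $a$ secured, set $T:=(a^{-1}-2)^{1/2}\in\mathcal{R(A)}^{++}$ and $S:=(\Phi(a)^{-1}-2)^{1/2}\in\mathcal{R(B)}^{++}$, apply Theorem \ref{Jordan} to the normalized $\Psi$, and conclude $\Phi=\Phi_S^{-1}\circ J\circ\Phi_T$.
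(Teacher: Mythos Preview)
Your overall strategy matches the paper's: for (1)$\Leftrightarrow$(2) you compose with $\Phi_\alpha$ and $\Phi_T$ to push $\Phi(1/2)$ to $1/2$ and then invoke Theorem~\ref{Jordan}, exactly as the paper does (your $T=((1-c)^{-1}-2)^{1/2}$ is the same element as the paper's $(2c-1)^{1/2}(1-c)^{-1/2}$). In the finite case you likewise seek $T,S$ so that the normalized map fixes $1/2$, with Lemma~\ref{inverse} supplying the order-theoretic transfer of invertibility in the regular ring. The only structural difference is that in the ``Moreover'' part the paper works on the $\mathcal{B}$ side---it builds $b_1=\tfrac14 p+\tfrac12 p^\perp\Phi(1/2)$ from the spectral projection $p$ of $\Phi(1/2)$ onto $(1/3,1]$, checks directly that $b_1$, $\Phi(1/2)-b_1$, and $\tfrac12-b_1$ are positive invertible in $\mathcal{R}(\mathcal{B})$, and then uses Lemma~\ref{inverse} once to pull back---whereas you work on the $\mathcal{A}$ side, building $a$ from $a_0=\Phi^{-1}(1/2)$ via functional calculus. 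Both routes are valid and of comparable length.

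There is, however, one incorrect step. The assertion that ``order isomorphisms preserve infima of commuting pairs'' is false: the functional-calculus minimum $\tfrac12\wedge a_0$ computed in $\{a_0\}''$ is in general \emph{not} the infimum of $\{\tfrac12,a_0\}$ in $\mathcal{A}^1$ (already in $M_2(\mathbb{C})$ with $a_0=\mathrm{diag}(3/4,1/4)$ one finds lower bounds not dominated by $\mathrm{diag}(1/2,1/4)$), so there is no reason for $\Phi$ to send it to $b_0\wedge\tfrac12$. Fortunately you do not need this: since $a=t(\tfrac12\wedge a_0)\le a_0$ and $\Phi(a_0)=\tfrac12$, the inequality $\Phi(a)\le\tfrac12$ is immediate. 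You should also make explicit that $\Phi(a)$ itself lies in $\mathcal{R}(\mathcal{B})^{++}$, which is needed for $S=(\Phi(a)^{-1}-2)^{1/2}$ to be a well-defined invertible element of $\mathcal{R}(\mathcal{B})$; this follows by the same mechanism, applying Lemma~\ref{inverse} to the commuting pair $(0,a)$ and transporting through $\Phi$.
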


\begin{proof}
	$(\textit{2})\Rightarrow(\textit{1}):$ It is easy to see that $\Phi$  is an order isomorphism. By \eqref{Phi(alpha)}, $\Phi_T(1/2)=(1+T^2)(2+T^2)$. Using \eqref{Phi_alpha}, by a simple computation we obtain that $\Phi\left(1/2\right) = 1 - \left(1+(1+\alpha^2)^{-1}(T^2 + 1)\right)^{-1}$. Since $T\in\mathcal{B}^{++}$, $\Phi(1/2)$ and $1-\Phi(1/2)$ are also in $\mathcal{B}^{++}$.

$(\textit{1})\Rightarrow(\textit{2}):$ Since  $\Phi(1/2)$ and $1-\Phi(1/2)$ are invertible in $\mathcal{B}$, then there exists an $\varepsilon\in(0,1/2)$ such that $\varepsilon<\Phi(1/2)<1-\varepsilon$. Denote $\alpha:= \left( \frac{1-2\varepsilon}{\varepsilon}\right)^{1/2}$. A simple computation shows that $1/2<\Phi_\alpha\left( \Phi(1/2)\right)<\Phi(1-\varepsilon)<1$. Thus $T:=\left(2\Phi_\alpha(\Phi(1/2))-1\right)^{1/2}\left(1- \Phi_\alpha(\Phi(1/2))\right) ^{-1/2}$ is a well defined  positive invertible element in $\mathcal{B}$ satisfying 	$$\Phi^{-1}_{T}\left( \Phi_\alpha\left( \Phi(1/2)\right)\right)=1/2,$$ 
this follows from \eqref{Phi(alpha)} and \eqref{Phi_alpha}. Applying Theorem \ref{Jordan}, there exists a Jordan $^*$-isomorphism $J:\mathcal{A}\to\mathcal{B}$ such that $\Phi=\Phi_\alpha^{-1}\circ\Phi_{T}\circ J\arrowvert_{\mathcal{A}^1}.$

We now consider the case where $\mathcal{A}$ and $\mathcal{B}$ are finite $AW^*$-algebras. The if implication is trivial. Conversely,  denote by $p$ the characteristic function in $\mathcal{C(X}):=\{\Phi(1/2)\}''$ of the closure of the open set $\{x\in\mathcal{X}: \Phi(1/2)(x)>1/3\}$. Denote $b_1=1/4p+1/2p^{\perp}\Phi(1/2)$. Since $1/2$ is invertible in $\mathcal{R(A)}$. By Lemma \ref{inverse}, $\Phi(1/2)$ and $1-\Phi(1/2)=\Phi^{\perp}(1/2)$ are both invertible in $\mathcal{R(B)}$, and then $b_1,\Phi(1/2)-b_1$, and $1/2-b_1=1/4p+1/2p^\perp(1-\Phi(1/2))$ are positive invertible in $\mathcal{R(B)}$. Again by Lemma \ref{inverse}, $b_0:=\Phi^{-1}(b_1)$ and $1/2-b_0$ are both positive invertible in  $\mathcal{R(A)}$. Denote $T=(1-2b_0)^{1/2}b_0^{-1/2}$ and $S=(1-2b_1)^{1/2}b_1^{-1/2}$. Depending on \eqref{Phi(alpha)}, an explicit computation yields  that $\Phi_T^{-1}(1/2)=b_0$ and $\Phi_S^{-1}(1/2)=b_1$. So,  $\Phi_S\circ\Phi\circ\Phi_T^{-1}(1/2)=1/2$. By Theorem \ref{Jordan}, there exists a  Jordan $^*$-isomorphism $J:\mathcal{A}\to\mathcal{B}$ such that $\Phi=\Phi_{S}^{-1}\circ J \circ\Phi_{T}.$
\end{proof}

\section{Order isomorphisms between operator intervals}\label{S4}

Building on the structural theorems for order isomorphisms between effect algebras of $AW^*$-algebras established earlier, this section provides proofs of the remaining results that describe order isomorphisms between operator intervals in $AW^*$-algebras, namely, Theorem \ref{intervals}, Proposition \ref{comm_case}, and Theorem \ref{I_1}.

\begin{thm}\label{A+}
Let $\mathcal{A}$ and $\mathcal{B}$ be two $AW^*$-algebras.  Assume that $\mathcal{A}$ does not have a type~$\mathrm{I}_1$ direct summand. A map $\Phi:\mathcal{A}^+\to\mathcal{B}^+$ is an order isomorphism if, and only if, there exist a Jordan $^*$-isomorphism $J:\mathcal{A}\to \mathcal{B}$ and $b\in\mathcal{B}^{++}$ such that $$\Phi(a)=bJ(a)b,\qquad a\in \mathcal{A}^+.$$
\end{thm}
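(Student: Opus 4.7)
For the \emph{if} direction, verification is routine: the map $a\mapsto bJ(a)b$ is a bijection from $\mathcal{A}^+$ onto $\mathcal{B}^+$, and it preserves order in both directions since $J$ does and conjugation by $b\in\mathcal{B}^{++}$ is an order automorphism of $\mathcal{B}^+$ with inverse the conjugation by $b^{-1}$. For the converse, my plan is a three-stage reduction: first extract a scaling factor from $\Phi(1)$, then reduce to the effect-algebra setting treated in Section~\ref{S3}, and finally propagate the conclusion back to $\mathcal{A}^+$.

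Since $0$ is the minimum of $\mathcal{A}^+$, $\Phi(0)=0$. Set $b_0:=\Phi(1)$. The first key claim is $b_0\in\mathcal{B}^{++}$: the point is that $1$ is an order unit in $\mathcal{A}^+$ (every $a\in\mathcal{A}^+$ is dominated by some integer multiple of $1$), and this cofinality transfers under $\Phi$, making the chain $\{\Phi(n)\}_{n\geq 1}$ cofinal in $\mathcal{B}^+$. An order-theoretic characterisation of positive invertibility, leveraging the $AW^*$-structure together with Lemma~\ref{inverse} and the monotone-completion framework of Section~\ref{2.2}, then forces $b_0$ to dominate a positive scalar in $\mathcal{B}$. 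Putting $b:=b_0^{1/2}$ and $\widetilde{\Phi}(a):=b^{-1}\Phi(a)b^{-1}$ yields an order isomorphism $\widetilde{\Phi}:\mathcal{A}^+\to\mathcal{B}^+$ with $\widetilde{\Phi}(1)=1$, whose restriction to $\mathcal{A}^1$ is an order isomorphism $\widetilde{\Phi}|_{\mathcal{A}^1}:\mathcal{A}^1\to\mathcal{B}^1$ fixing $0$ and $1$.

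Applying Theorem~\ref{general_formula} (after verifying, via Lemma~\ref{inverse} and the extendibility of $\widetilde{\Phi}$ to unbounded elements, that $\widetilde{\Phi}(1/2)$ and $1-\widetilde{\Phi}(1/2)$ are invertible in $\mathcal{B}$), I obtain a Jordan $^*$-isomorphism $J:\mathcal{A}\to\mathcal{B}$, a scalar $\alpha>0$, and $T\in\mathcal{B}^{++}$ with $\widetilde{\Phi}|_{\mathcal{A}^1}=\Phi_\alpha^{-1}\circ\Phi_T\circ J|_{\mathcal{A}^1}$. The main obstacle is to show that the M\"obius-type transformations are trivial on $\mathcal{B}^1$, i.e.\ that $T=\alpha\cdot 1$, so that $\Phi_\alpha^{-1}\circ\Phi_T=\mathrm{id}$. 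The rigidity arises from the fact that both $\Phi_T$ and $\Phi_\alpha^{-1}$ have range inside the bounded set $\mathcal{B}^1$, while $\widetilde{\Phi}$ must extend to an order isomorphism of the unbounded cone $\mathcal{B}^+$. I would make this precise by analysing the restriction of $\widetilde{\Phi}$ to each sub-interval $[0,t]_\mathcal{A}$ for $t>1$: after rescaling by $t^{-1}$ these are again order isomorphisms of effect algebras, and a consistency argument comparing the M\"obius parameters obtained for different $t$ forces the original parameters to collapse. Once $\widetilde{\Phi}|_{\mathcal{A}^1}=J|_{\mathcal{A}^1}$ is established, the same rescaling argument applied on each $[0,t]_\mathcal{A}$ yields $\widetilde{\Phi}(a)=J(a)$ for all $a\in\mathcal{A}^+$, and undoing the normalisation gives $\Phi(a)=bJ(a)b$ on $\mathcal{A}^+$, as claimed.
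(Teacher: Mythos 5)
Your overall architecture (normalize at a suitable point, restrict to the effect algebra, invoke the effect-algebra classification, then kill the M\"obius corrections by a limiting/consistency argument) is the same as the paper's, but two steps on which your reduction hinges are asserted rather than proved, and neither follows from the ingredients you cite. First, the claim $\Phi(1)\in\mathcal{B}^{++}$: cofinality of the chain $\{\Phi(n)\}_{n\ge 1}$ in $\mathcal{B}^+$ only tells you that $\Phi(n)\ge 1$ for \emph{some} $n$, i.e.\ that $\Phi(n)$ is invertible for large $n$; it says nothing about $\Phi(1)$ itself, since $\Phi(n)\neq n\Phi(1)$ a priori. Lemma~\ref{inverse} is of no help here: it characterizes invertibility in the regular ring $\mathcal{R}(\mathcal{B})$ of a \emph{finite} $AW^*$-algebra, which is strictly weaker than invertibility in $\mathcal{B}$ and is unavailable in the properly infinite case. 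The paper sidesteps this entirely by normalizing with $\Phi\bigl(\|\Phi^{-1}(1)\|+1\bigr)$, which dominates $\Phi(\Phi^{-1}(1))=1$ and is therefore manifestly invertible; you should do the same (the fix is easy, but as written the step is unjustified).

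Second, and more seriously, your appeal to Theorem~\ref{general_formula} requires that $\widetilde{\Phi}(1/2)$ and $1-\widetilde{\Phi}(1/2)$ be invertible \emph{in $\mathcal{B}$}, and no argument is given for this; an order isomorphism of effect algebras can certainly move $1/2$ to a non-invertible element (this is exactly why the finite case of Theorem~\ref{general_formula} only gets invertibility in $\mathcal{R}(\mathcal{B})$), and the extra information that $\widetilde{\Phi}$ lives on the whole cone does not obviously rescue you before the theorem is proved --- using the final conclusion $\widetilde{\Phi}(1/2)=1/2$ here would be circular. The paper's proof is engineered precisely to avoid this hypothesis: for each large $n$ it conjugates by $\Psi(n)^{\pm 1/2}$ and composes with $\Phi_{T_n}$, $\Phi_{S_n}$ for the explicitly invertible elements $T_n=(n-2)^{1/2}$, $S_n=(\Psi(n)-2)^{1/2}$, producing an effect-algebra isomorphism that \emph{fixes} $1/2$, so that Theorem~\ref{Jordan} applies directly. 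Finally, your rigidity step (forcing $T=\alpha\cdot 1$) is only a sketch; the paper replaces it with a concrete computation, evaluating the resulting formula at scalars $t$ and letting $n\to\infty$ to get $\Psi(t)=t$, then using compatibility of $J$ with the continuous functional calculus to conclude $\Psi=J$ on all of $\mathcal{A}^+$. Until the invertibility of $\widetilde{\Phi}(1/2)$ is either proved or bypassed as in the paper, your route does not go through.
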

\begin{proof}
Only the necessity part requires proof. Note that the map $\Psi:\mathcal{A}^+\to\mathcal{B}^+$ defined by $$a\mapsto \Phi\left( \|\Phi^{-1}(1)\|+1\right) ^{-1/2}\Phi\left( (\|\Phi^{-1}(1)\|+1)a\right) \Phi\left( \|\Phi^{-1}(1)\|+1\right)^{-1/2},$$
 is an order isomorphism satisfying $\Psi(1)=1$. Denote $n_0=\max\left\lbrace 3, \|\Psi^{-1}(3) \|\right\rbrace $. Let $n\geq n_0$, the elements $T_n:=(n-2)^{1/2}$ and $S_n:=(\Psi(n)-2)^{1/2}$ are  positive invertible elements. By \eqref{Phi(alpha)}, $\Phi_{T_n}^{-1}(1/2)=1/n$ and $\Phi_{S_n}^{-1}(1/2)=\Psi(n)^{-1}$. Notice that the map $\mathcal{A}^1\to\mathcal{B}^1$ defined by $$a\mapsto \Phi_{S_n}\left( \Psi(n)^{-1/2}\Psi(n\Phi_{T_n}^{-1}(a))\Psi(n)^{-1/2}\right)$$  is an order isomorphism that preserves $1/2$. Thus, by Theorem \ref{Jordan}, there exists a Jordan $^*$-isomorphism $J:\mathcal{A}\to\mathcal{B}$ such that 
\begin{equation}\label{Psi_n}
	\Psi(a)=\Psi(n)^{1/2}\left( \Phi_{S_n}^{-1}\circ J\circ\Phi_{T_n} \right)(1/n a) \Psi(n)^{1/2},\qquad a\leq n. 
\end{equation}
In particular, $$\Psi(t)=\Psi(n)^{1/2} \Phi_{S_n}^{-1}\left(J\left( \Phi_{T_n} (t/n )\right) \right)\Psi(n)^{1/2}, \qquad t\in[0,n].$$
Since $T_n$ is a scalar,  $J(\Phi_{T_n} (t/n ))$ is a scalar, then, by \eqref{Phi(alpha)}, we conclude that  for every $t\in[0,n]$ $$\Psi(t)=J(\Phi_{T_n} (t/n ))\left(1-J(\Phi_{T_n} (t/n ))+\Psi(n)^{-1}\left( 2J(\Phi_{T_n} (t/n ))-1\right)  \right)^{-1}.$$
Since this  holds for every $n\geq n_0$,  $\Psi(n)^{-1}$ and $ J(\Phi_{T_n} (t/n ))$ converge (with norm) to $0$ and $t/(t+1)$, respectively, as $n$  increases indefinitely,  for all $t\geq0$. We obtain that $\Psi(t)=t$, for all $t\geq0$. Since $J$ is compatible with the continuous
functional calculus, i.e., for an arbitrary self-adjoint element $a\in\mathcal{A}^{sa}$ and continuous
real function $f$ on the spectrum of $a$, we have $J(f(a)) = f(J(a))$. Using \eqref{Psi_n} and the fact that $T_n=S_n$, for all $n\geq n_0$, we conclude that $\Psi(a)=J(a)$ for all $a\in\mathcal{A}^+$. Putting $b=\left( \Phi\left( \|\Phi^{-1}(1)\|+1\right)( \|\Phi^{-1}(1)\|+1)^{-1} \right)^{1/2} $, the definition of $\Psi$ implies that $\Phi(a)=bJ(a)b,$ for all $a\in\mathcal{A}^+$.
\end{proof}

The proof of the following theorem was partially inspired by  \cite[Proof of Theorem 2.2]{Our1}.
\begin{thm}\label{A++}
Let $\mathcal{A}$ and $\mathcal{B}$ be two $AW^*$-algebras.  Assume that $\mathcal{A}$ does not have a type~$\mathrm{I}_1$ direct summand. A map $\Phi:\mathcal{A}^{++}\to\mathcal{B}^{++}$ is an order isomorphism if, and only if, there exist a Jordan $^*$-isomorphism $J:\mathcal{A}\to \mathcal{B}$ and $b\in\mathcal{B}^{++}$ such that $$\Phi(a)=bJ(a)b,\qquad a\in \mathcal{A}^{++}.$$
\end{thm}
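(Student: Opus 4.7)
My plan is to reduce to Theorem \ref{A+} via a shift trick, which avoids the need to extend $\Phi$ to any non-invertible positive element. The \emph{if} direction is immediate: conjugation by $b\in\mathcal{B}^{++}$ and any Jordan $^*$-isomorphism both map $\mathcal{A}^{++}$ bijectively onto $\mathcal{B}^{++}$ and preserve the order in both directions.

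For the \emph{only if} direction, fix a scalar $s>0$ and define
\[\Xi_s\colon\mathcal{A}^+\to\mathcal{B}^+,\qquad \Xi_s(a):=\Phi(a+s)-\Phi(s).\]
Since $a+s\in\mathcal{A}^{++}$ and $\Phi$ is order-preserving, $\Xi_s(a)$ is positive; its inverse sends $b\in\mathcal{B}^+$ to $\Phi^{-1}(b+\Phi(s))-s$, which lies in $\mathcal{A}^+$ because $b+\Phi(s)\geq\Phi(s)$ and $\Phi(s)\in\mathcal{B}^{++}$. A routine check then shows that $\Xi_s$ is an order isomorphism $\mathcal{A}^+\to\mathcal{B}^+$. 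Applying Theorem \ref{A+} produces, for each $s>0$, an element $b_s\in\mathcal{B}^{++}$ and a Jordan $^*$-isomorphism $J_s\colon\mathcal{A}\to\mathcal{B}$ with
\[\Phi(a+s)=\Phi(s)+b_sJ_s(a)b_s,\qquad a\in\mathcal{A}^+.\]

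The key step is to show that $b_s$ and $J_s$ are actually independent of $s$. For $s<s'$ and $a'\in\mathcal{A}^+$, substituting $a'+(s'-s)$ for $a$ in the identity at parameter $s$ and using $J_s(1)=1$ yields $\Phi(a'+s')=\Phi(s)+b_sJ_s(a')b_s+(s'-s)b_s^2$, while the identity at $s'$ gives $\Phi(a'+s')=\Phi(s')+b_{s'}J_{s'}(a')b_{s'}$. Equating these and taking $a'=0$ yields $\Phi(s')-\Phi(s)=(s'-s)b_s^2$; varying $s'$ shows $b_s^2$ is independent of $s$, hence so is the positive square root $b_s=:b$. Feeding this back forces $bJ_s(a')b=bJ_{s'}(a')b$, hence $J_s=J_{s'}$ on $\mathcal{A}^+$ and, by linearity, on all of $\mathcal{A}$; call this common Jordan $^*$-isomorphism $J$. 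I anticipate this consistency calculation to be the main technical step of the proof.

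Every $c\in\mathcal{A}^{++}$ admits a decomposition $c=(c-s)+s$ with any scalar $s\in(0,\|c^{-1}\|^{-1}]$, so the above formula yields
\[\Phi(c)=bJ(c)b+c_*,\qquad c_*:=\Phi(s)-sb^2,\]
where $c_*$ is independent of $s$ by the identity $\Phi(s')-\Phi(s)=(s'-s)b^2$. Since $\Phi(s)=sb^2+c_*\in\mathcal{B}^{++}$ and the family $\{\Phi(s)\}_{s>0}$ is monotone decreasing as $s\to 0^+$, its monotone infimum $c_*$ lies in $\mathcal{B}^+$. To conclude that $c_*=0$ I use the range: $\Phi(\mathcal{A}^{++})=\mathcal{B}^{++}$, whereas the displayed identity rewrites this range as $\mathcal{B}^{++}+c_*$, forcing $\mathcal{B}^{++}+c_*=\mathcal{B}^{++}$. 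If $c_*\neq 0$, any scalar $\varepsilon\cdot 1\in\mathcal{B}^{++}$ with $0<\varepsilon<\|c_*\|$ fails to have the form $d+c_*$ with $d\in\mathcal{B}^{++}$, since $\varepsilon-c_*$ is not positive. Hence $c_*=0$, and $\Phi(a)=bJ(a)b$ on $\mathcal{A}^{++}$, as desired.
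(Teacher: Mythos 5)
Your proof is correct and follows essentially the same route as the paper: reduce to Theorem \ref{A+} via the shift $a\mapsto\Phi(a+s)-\Phi(s)$, show that the resulting $b_s$, $J_s$ and additive constant are independent of $s$, and then kill the constant using positivity of the norm limit together with surjectivity of $\Phi$ onto $\mathcal{B}^{++}$. The only cosmetic difference is that you obtain the independence of $b_s$ from the algebraic telescoping identity $\Phi(s')-\Phi(s)=(s'-s)b_s^{2}$, whereas the paper divides by $t$ and lets $t\to\infty$.
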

\begin{proof}
Only the necessity part requires proof. Choose an $\varepsilon>0$. The map  $\mathcal{A}^+\to\mathcal{B}^+$  defined by $a\mapsto \Phi(a+\varepsilon)-\Phi(\varepsilon)$ is an order isomorphism. By Theorem \ref{A+}, there exist a Jordan $^*$-isomorphism $J:\mathcal{A}\to\mathcal{B}$ and $b_\varepsilon\in\mathcal{B}^{++}$ such that $$\Phi(a)=b_{\varepsilon}J_{\varepsilon}(a)b_{\varepsilon}+c_{\varepsilon}, \qquad a\geq \varepsilon,$$
where $c_{\varepsilon}=\Phi(\varepsilon)-\varepsilon b_{\varepsilon}^2$. Let  $\varepsilon'>0$. For every $t\geq\max\{\varepsilon,\varepsilon'\}$, we have $$1/t\Phi(t)=b_{\varepsilon}^{2}+1/tc_{\varepsilon}=b_{\varepsilon'}^{2}+1/tc_{\varepsilon'}.$$ 
Note that $1/tc_{\varepsilon'}$ and $1/tc_{\varepsilon}$ converge in norm to $0$, as $t$ increases indefinitely. Thus, $b_{\varepsilon}=b_{\varepsilon'}$, this follows from the uniqueness of the square root. We get that $b:=b_{\varepsilon}$ and $c:=c_{\varepsilon'}$ are independent of $\varepsilon$, in other words,    $$\Phi(a)=bJ(a)b+c, \qquad a\in\mathcal{A}^{++}.$$
Note that $\Phi(1/t)=1/tb^2+c\geq 0$, for all $t>0$. Notice that $1/tb^2+c$ converges in norm to $c$, as $t$ increases indefinitely.  Thus, $c\geq 0$. On the other hand, $c\leq bJ(a)b+c=\Phi(a)$, for all $a\in\mathcal{A}^{++}$. The bijectivity of $\Phi$ implies that $c\leq a$, for all $a\in\mathcal{B}^{++}$, which yields that $c\leq 0$. Consequently, $c=0$.  
\end{proof} 

We turn now to the proof of  Theorem \ref{I_1}.

\begin{thm}\label{A^sa}
Let $\mathcal{A}$ and $\mathcal{B}$ be two $AW^*$-algebras.  Assume that $\mathcal{A}$ does not have a type~$\mathrm{I}_1$ direct summand. A map $\Phi:\mathcal{A}^{sa}\to\mathcal{B}^{sa}$ is an order isomorphism if, and only if, there exist a Jordan $^*$-isomorphism $J:\mathcal{A}\to \mathcal{B}$, $b\in\mathcal{B}^{++}$ and $c\in\mathcal{B}^{sa}$ such that $$\Phi(a)=bJ(a)b+c,\qquad a\in \mathcal{A}^{sa}.$$
\end{thm}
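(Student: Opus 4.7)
The sufficiency direction is routine, so the plan focuses on the necessity. The overall strategy is to reduce to Theorem \ref{A+} by translation at each scale, then match the resulting local descriptions.

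For each $t > 0$, the restriction of $\Phi$ to $\{a \in \mathcal{A}^{sa} : a \geq -t\}$ is an order isomorphism onto $\{b \in \mathcal{B}^{sa} : b \geq \Phi(-t)\}$. Translating by $t$ on the domain and by $-\Phi(-t)$ on the codomain, the map $\Psi_t : \mathcal{A}^+ \to \mathcal{B}^+$ defined by $\Psi_t(a) := \Phi(a - t) - \Phi(-t)$ is an order isomorphism. Theorem \ref{A+} then supplies a Jordan $^*$-isomorphism $J_t : \mathcal{A} \to \mathcal{B}$ and an element $b_t \in \mathcal{B}^{++}$ such that $\Psi_t(a) = b_t J_t(a) b_t$ for all $a \in \mathcal{A}^+$. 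Substituting $y := a - t$ and using $J_t(y + t) = J_t(y) + t$, one obtains
$$\Phi(y) = b_t J_t(y) b_t + c_t, \qquad y \geq -t,$$
where $c_t := t b_t^2 + \Phi(-t)$.

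It remains to show that the triple $(b_t, J_t, c_t)$ is independent of $t$. For $0 < t < t'$ the two formulas coexist on the set $\{y : y \geq -t\}$. Evaluating at $y = 0$ yields $c_t = \Phi(0) = c_{t'}$; evaluating at $y = 1$ yields $b_t^2 = \Phi(1) - \Phi(0) = b_{t'}^2$, and uniqueness of the positive square root in the $AW^*$-algebra $\mathcal{B}$ gives $b_t = b_{t'}$. Invertibility of this common element $b$ then forces $J_t = J_{t'}$ on the interval $[-t, \infty)_\mathcal{A}$, and $\mathbb{R}$-linearity extends the agreement to all of $\mathcal{A}$. For any $y \in \mathcal{A}^{sa}$, choosing $t = \|y\|$ gives $y \geq -t$ and hence $\Phi(y) = bJ(y)b + c$, as required, with $c := \Phi(0)$.

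I do not anticipate a serious obstacle: Theorem \ref{A+} carries all of the analytic and structural content, and what remains is a compatibility calculation across translations. The only point requiring care is the matching of the $J_t$ and $b_t$ across different values of $t$; this is handled by evaluating the two representations at the scalar test elements $0$ and $1$, invoking uniqueness of the positive square root, and using the fact that any shifted cone $[-t,\infty)_\mathcal{A}$ linearly spans $\mathcal{A}^{sa}$.
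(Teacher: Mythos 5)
Your proposal is correct and follows essentially the same route as the paper: both reduce to Theorem \ref{A+} by applying it to translated cones $\{a \geq -t\}$ and then match the resulting data $(b_t, J_t, c_t)$ across scales (the paper first reflects via $a \mapsto -\Phi(-a)+\Phi(0)$ and normalizes by $b$ before translating by integers, but the matching is the same in substance — evaluation at scalars, unitality of $J_t$, uniqueness of the positive square root, and the fact that a shifted cone spans $\mathcal{A}^{sa}$). No gaps.
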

\begin{proof}
Only the necessity part requires proof. The map $\mathcal{A}^+\to \mathcal{B}^+$ defined by $a\mapsto -\Phi(-a)+\Phi(0)$ is an order isomorphism. By Theorem \ref{A+}, there exist a Jordan $^*$-isomorphism $J:\mathcal{A}\mapsto\mathcal{B}$ and $b\in\mathcal{B}^{++}$ such that 
$$\Phi(a)=bJ(a)b+ \Phi(0), \qquad a\leq 0.$$	
Let $\Psi:\mathcal{A}^{sa}\to\mathcal{B}^{sa}$ be the map , defined by $a\mapsto b^{-1}\left( \Phi(a)-\Phi(0)\right) b^{-1}$. It is easy to see that  $\Psi$ is an order isomorphism satisfying $\Psi(a)=J(a)$, for all $a\leq0$. Let $n$ be a positive integer,  since the	 map $\mathcal{A}^+\to \mathcal{B}^+$ defined by $a\mapsto \Psi(a-n)+n$ is an order isomorphism, then there exist a Jordan $^*$-isomorphism $J_n:\mathcal{A}\mapsto\mathcal{B}$ and $b_n\in\mathcal{B}^{++}$ such that $$\Psi(a)=b_nJ_n(a)b_n+ n(b_n^2-1), \qquad a\geq -n.$$
Note that $\Psi(0)=J(0)=n(b_n^2-1)=0$, and then $b_n=1$. Hence,  $\Psi(a)=J_n(a)$, for all $a\geq -n$. The fact that $J=J_n$ follows from the equality $\Psi\arrowvert_{[-n,0]_{\mathcal{A}}}= J\arrowvert_{[-n,0]_{\mathcal{A}}}=J_n\arrowvert_{[-n,0]_{\mathcal{A}}}$. The latter
holds for every $n>0$, which means that $\Psi=J\arrowvert_{\mathcal{A}^{sa}}$.  Therefore, from  the definition of $\Psi$, we obtain that $$\Phi(a)=bJ(a)b+c,\qquad a\in \mathcal{A}^{sa},$$ where $c:=\Phi(0)$. This completes the proof.	
\end{proof}

Now, we turn to the proof of Theorem \ref{I_1}.
\begin{proof}[Proof of Theorem \ref{I_1}:]
	We first consider  the case where $\mathcal{I}=\mathcal{A}^1$ and 	$\mathcal{J}=\mathcal{B}^1$.  Let  $p\in\mathcal{P(A)}$.   recall that  $p$ is central if, and only if, it has a unique complement, i.e., there exists a unique projection  $p'\in\mathcal{P(A)}$ such that $\mathrm{inf}_{\mathcal{P(A)}}\{p,p'\}=0$ and $\mathrm{sup}_{\mathcal{P(A)}}\{p,p'\}=1$. Since $p_{\mathcal{A}}(1)$ is the greatest central  projection in  $\mathcal{P(A)}$, such that all projection $p\leq p_{\mathcal{A}}(1)$ are  also central, we conclude that  $\Phi\left( p_\mathcal{A} (1)\right) =p_\mathcal{B} (1)$ and $\Phi\left( p_\mathcal{A} (1)^\perp\right)= p_\mathcal{B} (1)^\perp$. Thus, the restriction $\Phi_i:\Phi\arrowvert_{\mathcal{A}_i^1}$,  $i\in\{1,-1\}$,   is an order isomorphism onto   $\mathcal{B}_i^1$. Let $a\in\mathcal{A}^1$, $\Phi(a)=p_\mathcal{B} (1)\Phi(a)+p_\mathcal{B} (1)^\perp\Phi(a)$. Since $p_\mathcal{A} (1)a=\mathrm{sup}_{\mathcal{A}^{1}}\{a,p_\mathcal{A} (1)\}$, we obtain that $$\Phi_1\left( p_\mathcal{A} (1)a\right)  =\Phi\left( p_\mathcal{A} (1)a\right)=\mathrm{sup}_{\mathcal{B}^1}\{\Phi(a),p_\mathcal{B} (1)\}=p_\mathcal{B} (1)\Phi(a).$$ 
	Similarly, we may show that $\Phi_{-1}\left( p_\mathcal{A} (1)^\perp a\right)=p_\mathcal{B} (1)^\perp\Phi(a)$. Thus $\Phi(a)=\Phi_{1}(p_\mathcal{A} (1)a)+\Phi_{-1}(p_\mathcal{A} (1)^{\perp}a)$. 
	
	For the general case. Let  $a_1,a_{-1}\in\mathcal{I}$ such that $a_{-1}<a_{1}$,  and $\Phi(a_{-1})<\Phi(a_1)$.
	Let $\Gamma:[a_{-1},a_1]_\mathcal{A}\to\mathcal{A}^1$ and $\Theta:\mathcal{B}^1 \to[\Phi(a_{-1}),\Phi(a_1)]_{\mathcal{B}}$ be the order isomorphisms defined by 
	$$\Gamma(a):= (a_1-a_{-1})^{-1/2}(a-a_{-1})(a_1-a_{-1})^{-1/2},$$
	and 
	$$\Theta(b):=\left( \Phi(a_1)-\Phi(a_{-1})\right) ^{1/2}b\left( \Phi(a_1)-\Phi(a_{-1})\right)^{1/2} +\Phi(a_{-1}).$$
	Define $\Gamma_1: \mathcal{P}_\mathcal{A} (1)[a_{-1},a_{1}]_\mathcal{A}\to \mathcal{P}_\mathcal{A} (1)\mathcal{A}^1$   by 
	$$\Gamma_1\left(\mathcal{P}_\mathcal{A} (1)a\right) :=\mathcal{P}_\mathcal{A} (1)\Gamma(a),  \qquad a\in [a_{-1},a_1]_\mathcal{A}.$$
	Define   $\Theta_1:\mathcal{P}_\mathcal{B} (1)\mathcal{B}^1 \to  \mathcal{P}_\mathcal{B} (1)[\Phi(a_{-1}),\Phi(a_{1})]_\mathcal{B}$ by 
	$$\Theta_1\left(\mathcal{P}_\mathcal{B} (1)b\right) :=\mathcal{P}_\mathcal{B} (1)\Theta(b),  \qquad b\in \mathcal{B}^1.$$
	Note that $\Gamma_1, \Theta_1$ are  well defined order isomorphisms. Using $p_\mathcal{A} (1)^{\perp}$ and $p_\mathcal{B} (1)^{\perp}$  instead of $p_\mathcal{A}(1)$ and $p_\mathcal{A}(1)$, we may define similarly $\Gamma_{-1}$ and $\Theta_{-1}$. On the other hand  the map $\Psi:=\Theta^{-1}\circ\Phi\circ \Gamma^{-1}$  is  order isomorphism between $\mathcal{A}^1$  and $\mathcal{B}^1$.  By a discussion similar to that in the preceding
	paragraph, we can show that there exist two order isomorphisms  $\Psi_1:p_\mathcal{A} (1)\mathcal{A}^1\to p_\mathcal{B}(1)\mathcal{B}^1$  and $\Psi_{-1}:p_\mathcal{A} (1)^\perp\mathcal{A}^1\to p_\mathcal{B} (1)^\perp\mathcal{B}^1$ such that $p_\mathcal{B} (1)\Psi(a)=\Psi_1\left( p_\mathcal{B} (1)a\right)$ and $p_\mathcal{B} (1)^\perp\Psi(a)=\Psi\left( p_\mathcal{B} (1)^\perp a\right)$, for all $a\in\mathcal{A}^{1}$. Thus, $\Phi_{(a_{-1},a_1)}^{(1)}:=\Theta_1\circ \Psi_1 \circ \Gamma_1$  and $\Phi_{(a_{-1},a_1)}^{(-1)}:=\Theta_{-1}\circ \Psi_{-1} \circ \Gamma_{-1}$ are order isomorphisms that satisfy, for every $a\in[a_{-1},a_1]_{\mathcal{A}}$,
	\begin{equation}\label{independence}
		\Phi_{(a_{-1},a_1)}^{(1)}\left( p_\mathcal{A} (1)a\right)=p_\mathcal{B} (1)\Phi\left(a\right)\ \mbox{and} \   \Phi_{(a_{-1},a_1)}^{(-1)}\left( p_\mathcal{A} (1)^\perp a\right)=p_\mathcal{B} (1)^\perp\Phi\left( a\right).	
	\end{equation}  
	
	Note that for each pair $(a,b) \in \mathcal{I}^2$, there exists a pair $(a_{-1},a_1) \in \mathcal{I}^2$ such that $a_{-1} < a_1$, $\Phi(a_{-1}) < \Phi(a_1)$, and $a,b \in [a_{-1},a_1]_{\mathcal{A}}$. Using the previous fact together with \eqref{independence}, it is easily verified that the maps $\Phi_1 \colon p_\mathcal{A}(1)\mathcal{I} \to p_\mathcal{B}(1)\mathcal{J}$ and $\Phi_{-1} \colon p_\mathcal{A}(1)^\perp\mathcal{I} \to p_\mathcal{B}(1)^\perp\mathcal{J}$, defined by  
	$$
	p_\mathcal{A}(1)a \mapsto p_\mathcal{B}(1)\Phi(a) \quad \text{and} \quad p_\mathcal{A}(1)^\perp a \mapsto p_\mathcal{B}(1)^\perp\Phi(a),
	$$ 
	respectively, are well defined order isomorphisms. This completes the proof of the theorem.
\end{proof}

\begin{thm}\label{++non=sa}
Let $\mathcal{A}$ and $\mathcal{B}$ be two $AW^*$-algebras. Then, $\mathcal{A}^{sa}$  and $\mathcal{B}^{++}$ are order isomorphic if, and only if, $\mathcal{A}$ and $\mathcal{B}$  are Jordan $^*$-isomorphic and $\mathcal{A}$ is commutative.
\end{thm}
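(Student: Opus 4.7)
For the "if" direction, $\mathcal{B}$ is commutative (being Jordan $^*$-isomorphic to the commutative $\mathcal{A}$), so $\mathcal{B}=\mathcal{C}(\mathcal{Y})$ for a Stonean space $\mathcal{Y}$. The continuous functional calculus makes $\exp:\mathcal{B}^{sa}\to\mathcal{B}^{++}$ into an order isomorphism, and $a\mapsto \exp(J(a))$ gives the desired order isomorphism, where $J:\mathcal{A}\to\mathcal{B}$ is the given Jordan $^*$-isomorphism.

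For the "only if" direction, let $\Phi:\mathcal{A}^{sa}\to\mathcal{B}^{++}$ be an order isomorphism. Both sets are operator intervals, so Theorem \ref{I_1} decomposes $\Phi$ into order isomorphisms $\Phi_1:\mathcal{A}_1^{sa}\to\mathcal{B}_1^{++}$ and $\Phi_{-1}:\mathcal{A}_{-1}^{sa}\to\mathcal{B}_{-1}^{++}$. The central claim is that $\mathcal{A}_{-1}=0$. Suppose toward contradiction that $\mathcal{A}_{-1}\neq 0$. Restricting $\Phi_{-1}$ to $\mathcal{A}_{-1}^+$ and then shifting by $-\Phi_{-1}(0)$ yields an order isomorphism $\mathcal{A}_{-1}^+\to\mathcal{B}_{-1}^+$, so Theorem \ref{A+} delivers a Jordan $^*$-isomorphism $J:\mathcal{A}_{-1}\to\mathcal{B}_{-1}$. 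In particular $\mathcal{B}_{-1}$ has no type $\mathrm{I}_1$ summand either, and $\Psi:=\Phi_{-1}\circ J^{-1}:\mathcal{B}_{-1}^{sa}\to\mathcal{B}_{-1}^{++}$ is an order isomorphism whose source and target live inside the same algebra.

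For the key rigidity step, for each $c\in\mathcal{B}_{-1}^{sa}$ the map $\Psi_c(x):=\Psi(x+c)-\Psi(c)$ is an order automorphism of $\mathcal{B}_{-1}^{++}$, so Theorem \ref{A++} produces $b_c\in\mathcal{B}_{-1}^{++}$ and a Jordan $^*$-automorphism $J_c$ of $\mathcal{B}_{-1}$ with $\Psi(x+c)=b_cJ_c(x)b_c+\Psi(c)$ for $x\in\mathcal{B}_{-1}^{++}$. Setting $a=x+c$ and using linearity of $J_c$, this rewrites as $\Psi(a)-b_cJ_c(a)b_c=\Psi(c)-b_cJ_c(c)b_c$ for $a>c$. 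For $c_1<c_2$ the two resulting formulas agree on $\{a:a>c_2\}$, and since differences of elements of this half-space linearly span $\mathcal{B}_{-1}^{sa}$, the linear map $a\mapsto b_{c_1}J_{c_1}(a)b_{c_1}-b_{c_2}J_{c_2}(a)b_{c_2}$ vanishes identically; evaluating at $a=1$ forces $b_{c_1}=b_{c_2}$, and then $J_{c_1}=J_{c_2}$. Thus $b_c$ and $J_c$ are independent of $c$, and the formula $\Psi(a)=bJ(a)b+\Psi(0)$ holds for all $a\in\mathcal{B}_{-1}^{sa}$. But for $a=-n\cdot 1$ the right-hand side equals $-nb^2+\Psi(0)$, which fails to lie in $\mathcal{B}_{-1}^{++}$ for large $n$; this contradiction gives $\mathcal{A}_{-1}=0$.

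Since $\Phi_{-1}$ is now a bijection from the singleton $\mathcal{A}_{-1}^{sa}=\{0\}$ onto $p_\mathcal{B}(1)^\perp\mathcal{B}^{++}$, the latter must also be a singleton, forcing $\mathcal{B}_{-1}=0$; both algebras are therefore of the form $\mathcal{C}(\mathcal{X})$ and $\mathcal{C}(\mathcal{Y})$ for Stonean spaces. Composing $\Phi$ with the logarithm produces an order isomorphism $\mathcal{C}(\mathcal{X},\mathbb{R})\to\mathcal{C}(\mathcal{Y},\mathbb{R})$, and Proposition \ref{comm_case} (with $\mathcal{I}=\mathbb{R}$) supplies a homeomorphism $\mu:\mathcal{Y}\to\mathcal{X}$, which induces the required Jordan $^*$-isomorphism $\mathcal{A}\to\mathcal{B}$. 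The main obstacle is the rigidity step in the third paragraph; once $b_c$ and $J_c$ are shown to be constant in $c$, the contradiction is immediate from the incompatibility between the unbounded-below affine expression $bJ(a)b+\Psi(0)$ and the requirement that the image lie in the positive invertible cone.
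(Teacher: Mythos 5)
Your proof is correct and follows essentially the same route as the paper: reduce via Theorem~\ref{I_1} to algebras without a type~$\mathrm{I}_1$ summand, apply the positive-cone theorem to shifted restrictions of the isomorphism, show that the quadratic part $b_c$ is independent of the shift, and derive a contradiction from the fact that the resulting affine expression cannot remain in the positive invertible cone at $-n\cdot 1$ for large $n$ (the paper does this directly with the shifts $c=-n$, comparing the values at $0$ and $1$ to see that $b_n^2=\Phi(1)-\Phi(0)$ is constant, rather than through your half-space spanning argument, but the two rigidity arguments are interchangeable). One small point to repair: as written, the claim that $\Psi_c(x)=\Psi(x+c)-\Psi(c)$ is an order automorphism of the open cone $\mathcal{B}_{-1}^{++}$ presupposes that $\Psi$ preserves the strict relation $a<b$ (i.e.\ invertibility of $b-a$) in both directions, which you do not justify; an order isomorphism is only immediately known to carry the up-set $\{a:a\ge c\}$ onto $\{y:y\ge\Psi(c)\}$, so $\Psi_c$ is an order isomorphism of $\mathcal{B}_{-1}^{+}$ and you should invoke Theorem~\ref{A+} rather than Theorem~\ref{A++}; the rest of your argument then goes through unchanged. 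Your concluding paragraph, which upgrades the commutative case to an actual Jordan $^*$-isomorphism via the logarithm and Proposition~\ref{comm_case}, supplies a detail that the paper's proof leaves implicit.
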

\begin{proof}
To establish the sufficiency  direction, suppose $J: \mathcal{A} \to \mathcal{B}$ is a Jordan $^*$-isomorphism. We define the map $\Phi: \mathcal{A}^{\text{sa}} \to \mathcal{B}^{++}$ by $\Phi(a) = J(\exp(a))$, where $\exp$ denotes the  exponential  function. This map induces an order isomorphism between $\mathcal{A}^{sa}$ and $\mathcal{B}^{++}$.

Conversely, suppose that   $\mathcal{A}$  is not  commutative ($\mathcal{A}_{-1}\neq\{0\}$), and  $\mathcal{A}^{sa}, \mathcal{B}^{++}$  are  order isomorphic. By Theorem \ref{I_1}, we can suppose without loss of generality that $\mathcal{A}$  and $\mathcal{B}$  do not have $\mathrm{I}_1$ direct summand. Let  $\Phi:\mathcal{A}^{sa}\to\mathcal{B}^{++}$  be an order isomorphism. For every $n>0$, the map $\mathcal{A}^{+}\to\mathcal{B}^{+}$, defined by $a\mapsto \Phi(a-n)-\Phi(-n)$, is an order isomorphism. By Theorem \ref{A+}, there exist a Jordan $^*$-isomorphism  $J_n:\mathcal{A}\to\mathcal{B}$ and $b_n\in\mathcal{B}^{++}$ such that 
\begin{equation}\label{Phi(0)}
	\Phi(a)=b_nJ_n(a)b_n+ nb_n^2+\Phi(-n),\qquad a\geq -n.\end{equation}
Let $m,n>0$, $\Phi(0)=nb_n^2+\Phi(-n)=mb_m^2+\Phi(-m)$. Thus $\Phi(1)=b_n^2+\Phi(0)=b_m^{2}+\Phi(0)$, which implies that $b:=b_n=b_m$ is dependent of  the choice of $n$.  By \eqref{Phi(0)}, 
\begin{equation}\label{Phi(00)}
	\|\Phi(0)\|\geq n\|b\|^2-\|\Phi(-n)\|, \qquad n>0.
\end{equation}
Since $\Phi(-n)\leq \Phi(0)$, for all $n>0$, we conclude that the sequence  $(\|\Phi(-n)\|)$ is bounded.  The combination of this with \eqref{Phi(00)} leads to a contradiction.
\end{proof}

We now proceed to prove Proposition \ref{comm_case}, which addresses the commutative case. Recall that in \cite[Theorem 1]{sanchez} and \cite[Theorem 2.1]{Ehsani}, the authors provide proofs for the self-adjoint parts and effect algebras, whereas \cite{Marovt} examines positive cones under the restrictive assumption that the compact space is first-countable. To streamline our proof, we will be content to focus only on the cone case.
\begin{proof}[Proof of Proposition \ref{comm_case}]
	Let $\Phi: \mathcal{C}(\mathcal{X}, \mathbb{R}^+) \to \mathcal{C}(\mathcal{Y}, \mathbb{R}^+)$ be an order isomorphism. Every element $f \in  \mathcal{C}(\mathcal{X}, \mathbb{R})$ can be written uniquely as $f = f^+ - f^-$ for some $f^+, f^- \in \mathcal{C}(\mathcal{X}, \mathbb{R}^+)$ satisfying $f^+f^- = 0$. This ensures that the map  $\Psi: \mathcal{C}(\mathcal{X}, \mathbb{R}) \to \mathcal{C}(\mathcal{Y}, \mathbb{R})$ given by  
	$$
	\Psi(f^+ - f^-) = \Phi(f^+) - \Phi(f^-), \quad f^+, f^- \in \mathcal{C}(\mathcal{X}, \mathbb{R}^+) \text{ with } f^+f^- = 0,
	$$
	is well defined.  The bijectivity of $\Psi$ follows from the bijectivity of $\Phi$ and the fact that for any $f^+, f^- \in \mathcal{C}( \mathcal{X}, \mathbb{R}^+)$, we have  
	\begin{center}
		$f^+f^- = 0 \quad \text{if and only if} \quad \inf_{\mathcal{C}( \mathcal{X}, \mathbb{R}^+)}\{f^+, f^-\} = 0.$	
	\end{center}
	For $f, g \in \mathcal{C}(\mathcal{X}, \mathbb{R})$, the inequality $f \leq g$ holds if and only if $f^+ \leq g^+$ and $f^- \geq g^-$. Since $\Phi$ is an order isomorphism, $\Psi$ preserves the order in both directions. Thus, $\Psi$ is also an order isomorphism.  
	
	By \cite[Theorem 1]{sanchez}, there exist a homeomorphism $\mu: \mathcal{Y} \to \mathcal{X}$, a dense $G_\delta$-subset $\mathcal{Y}_0$  of $\mathcal{Y}$, and a family of increasing homeomorphisms $\{f_y\}_{y \in \mathcal{Y}_0}$ of $\mathbb{R}^+$ such that  
	$$
	\Phi(f)(y) = f_y\left(f(\mu(y))\right), \quad \text{for all } f \in \mathcal{C}(\mathcal{X}, \mathbb{R}^+), \, y \in \mathcal{Y}_0.
	$$
	This completes the proof.
\end{proof}

\section{ The reduction of the problem}\label{S5}

The set of self-adjoint elements in a $C^*$-algebra $\mathcal{A}$ forms a function system, that is, an Archimedean partially ordered real vector space where the algebra’s unit serves as the order unit; see \cite[pp. 588–589]{Choi}. Indeed,  by spectral theory, for each  $a\in\mathcal{A}^{sa}$,  we have
\begin{center}
	$\|a\|=\inf_{\mathbb{R}}\{t: -t\leq a \leq t\}.$
\end{center}
This implies  that $1$  serves as an order unit in $\mathcal{A}^{sa}$. Furthermore, it shows that the norm of  $\|.\|$  coincides with the order-unit norm. Importantly, the order structure of a $C^*$-algebra depends entirely on the order structure of the vector space formed by its self-adjoint elements. This observation motivates the utility of linearly embedding such vector spaces into larger function systems with specific properties  while preserving their original order structure.

We begin by defining the concept  of the regularity. In this context, it is an analogue of  the concept of topological density to the order-theoretic setting, where approximations are defined via suprema rather than topological limits.  
\begin{defn}
	Let $\mathcal{W}$ be   a function system  and $\mathcal{V}$ a subspace. For $w\in\mathcal{W}$, we define $$(-\infty, w]_{\mathcal{V}}:=\{v\in\mathcal{V}: v\leq w\} \qquad \mbox{and} \qquad [w,\infty)_{\mathcal{V}}:=\{v\in\mathcal{V}: v\geq w\}.$$ 
	We say that $\mathcal{V}$ is a regular subspace of $\mathcal{W}$ if, for each $w\in\mathcal{W}$  \begin{center}
		$w=\sup_{\mathcal{W}}(-\infty, w]_{\mathcal{V}}.$
	\end{center}
	If $\mathcal{V}$ is a regular subspace of $\mathcal{W}$, then a straightforward verification shows that  for every  $w\in\mathcal{W}$  
	\begin{center}
		$w=\inf_{\mathcal{W}}[w,\infty)_{\mathcal{V}}.$
	\end{center}
\end{defn}
We observe that this definition aligns with the one provided in Subsection \ref{2.2}, and furthermore,   $\mathcal{A}^{sa}$ is a regular subspace of $\overline{\mathcal{A}}^{sa}$.

\begin{defn}
	Let $\mathcal{V}$ be a function system. An extension of $\mathcal{V}$  is a pair $(\mathcal{W}, j)$, where $\mathcal{W}$ is another  function system, and $j:\mathcal{V}\to \mathcal{W}$ is a  unital linear order injection, that is, an order isomorphism onto $j(\mathcal{V})\subseteq\mathcal{W}$ with $j(1)=1$. The extension $(\mathcal{W}, j)$ is regular if $j(\mathcal{V})$ is a regular subspace of $\mathcal{W}$.  An order  injection $j:\mathcal{V}\to\mathcal{W}$ between function systems is said to be sup-preserving if $\sup_{\mathcal{V}}\mathcal{F}=v$ with $\mathcal{F}$
	a subset of $\mathcal{V}$, then $\sup_{\mathcal{W}}j\mathcal{(F)}=j(v)$.   
\end{defn}
The following  lemma is quoted from  \cite[Lemma 2.4]{Hamana}.
\begin{lem}\label{regular-sup-preserving}
	If $(\mathcal{W}, j)$ is a regular extension of $\mathcal{V}$, then $j$ is sup-preserving.	
\end{lem}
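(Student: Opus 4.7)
The plan is to show that $j(v)$ is the supremum of $j(\mathcal{F})$ in $\mathcal{W}$, where $v = \sup_{\mathcal{V}}\mathcal{F}$. That $j(v)$ is an upper bound of $j(\mathcal{F})$ is immediate: since $v \ge f$ for every $f \in \mathcal{F}$ and $j$ preserves order, $j(v) \ge j(f)$ for all such $f$. The content of the lemma is therefore the minimality of $j(v)$ among upper bounds.

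To this end, suppose $w \in \mathcal{W}$ is any upper bound of $j(\mathcal{F})$. I will exploit the dual form of regularity recorded in the definition, namely
\begin{equation*}
w = \inf_{\mathcal{W}} [w,\infty)_{j(\mathcal{V})}.
\end{equation*}
Thus to prove $j(v) \le w$, it suffices to show that $j(v)$ is a lower bound of the set $[w,\infty)_{j(\mathcal{V})}$; equivalently, that $j(v) \le j(v')$ for every $v' \in \mathcal{V}$ with $j(v') \ge w$.

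Fix such a $v'$. Then $j(v') \ge w \ge j(f)$ for every $f \in \mathcal{F}$. Because $j$ is an order injection (in particular, an order isomorphism onto its image), this forces $v' \ge f$ for all $f \in \mathcal{F}$. Hence $v'$ is an upper bound of $\mathcal{F}$ in $\mathcal{V}$, and by the definition of supremum, $v' \ge v$. Applying $j$ again yields $j(v') \ge j(v)$, as required. Taking the infimum over all admissible $v'$ gives $j(v) \le w$, and since $w$ was an arbitrary upper bound of $j(\mathcal{F})$, we conclude that $j(v) = \sup_{\mathcal{W}} j(\mathcal{F})$.

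The argument is essentially a clean application of the universal property embedded in regularity; the only step requiring the slightest care is the invocation of the $\inf$-formulation of regularity, which is why the definition explicitly records it. No additional structural hypotheses on $\mathcal{V}$ or $\mathcal{W}$ beyond those given are needed, and the bidirectional order preservation of $j$ is used only to transfer the inequalities $j(v') \ge j(f)$ back to $v' \ge f$ in $\mathcal{V}$.
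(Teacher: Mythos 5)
Your proof is correct. Note that the paper does not actually prove this lemma — it is quoted from Hamana's Lemma 2.4 — so your argument supplies a self-contained justification, and it is the natural one: the upper-bound half is immediate from order preservation, and the minimality half correctly uses the dual (infimum) form of regularity, $w=\inf_{\mathcal{W}}[w,\infty)_{j(\mathcal{V})}$, together with the fact that $j$ reflects order to show that every $v'\in\mathcal{V}$ with $j(v')\geq w$ dominates $\sup_{\mathcal{V}}\mathcal{F}$, whence $j(v)\leq w$. The only implicit point worth being aware of is that $[w,\infty)_{j(\mathcal{V})}$ is nonempty, which holds here because $j$ is unital and $\mathcal{W}$ is a function system, so $w$ is dominated by a scalar multiple of the order unit lying in $j(\mathcal{V})$.
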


A Dedekind completion of a function system $\mathcal{V}$ is a regular extension  $( \mathcal{W},j)$, where $\mathcal{W}$ is a boundedly complete vector lattice, i.e.  a lattice in which every subset that is bounded above has a least upper-bound and every subset that is bounded below has a greatest lower-bound. In \cite[Lemma 1.2, Theorem 2.9]{Wright}, Wright proved that any function system has a unique Dedekind completion up to  linear order isomorphism. In particular, for each (unital) $C^*$-algebra $\mathcal{A}$, there exists a Dedekind completion $\left( \mathcal{V}(\mathcal{A}),j\right)$ for the function system $\mathcal{A}^{sa}$. 

Let $\mathcal{A}$ be a $C^*$-algebra and $\overline{\mathcal{A}}$ its regular monotone completion. Let  $\left( \mathcal{V}(\overline{\mathcal{A}}),j\right)$ be the Dedekind completion for the function system $\overline{\mathcal{A}}^{sa}$. From  Lemma \ref{regular-sup-preserving}, we know that  $j$ is sup-preserving. Hence, we can identify $\overline{\mathcal{A}}^{sa}$ with $j(\overline{\mathcal{A}}^{sa})$ without loss of any information on the order structure of $\overline{\mathcal{A}}^{sa}$. More precisely, if  $\sup_{\overline{\mathcal{A}}^{sa}}\mathcal{F}=v $ with $\mathcal{F}$
a subset of $\overline{\mathcal{A}}^{sa}$, then we have  $\sup_{\mathcal{V}(\overline{\mathcal{A}})}j(\mathcal{F})=j(v)$. After this identification,  $\mathcal{A}^{sa}$ can also be regarded   as a subspace  of $\mathcal{V}(\overline{\mathcal{A}})$. 

The following proposition  follows  immediately from Lemma \ref{regular-sup-preserving} and \cite[Lemma 2.5]{Hamana}.
\begin{prop}\label{A-regular}
	Let $\mathcal{A}$	be a $C^*$-algebra. Then  $\mathcal{A}^{sa}$ is  a regular  subspace of $ \mathcal{V}(\overline{\mathcal{A}})$. Furthermore, the embedding $\mathcal{A}^{sa} \hookrightarrow \mathcal{V}(\overline{\mathcal{A}})$ is sup-preserving.
\end{prop}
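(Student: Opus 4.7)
The plan is to exploit the transitivity of regularity through the two-step tower of embeddings $\mathcal{A}^{sa}\hookrightarrow\overline{\mathcal{A}}^{sa}\hookrightarrow\mathcal{V}(\overline{\mathcal{A}})$. The first inclusion is regular by property~(ii) of the regular monotone completion recalled in Subsection~\ref{2.2}, while the second is regular by the very definition of a Dedekind completion. Lemma~\ref{regular-sup-preserving} therefore tells us, for free, that both of these embeddings are already sup-preserving, and the entire task is to propagate these two facts one level further up.

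To prove regularity of $\mathcal{A}^{sa}$ inside $\mathcal{V}(\overline{\mathcal{A}})$, I would fix $v\in\mathcal{V}(\overline{\mathcal{A}})$ and aim to show $v=\sup_{\mathcal{V}(\overline{\mathcal{A}})}(-\infty,v]_{\mathcal{A}^{sa}}$. Regularity of $\overline{\mathcal{A}}^{sa}$ in $\mathcal{V}(\overline{\mathcal{A}})$ yields $v=\sup_{\mathcal{V}(\overline{\mathcal{A}})}(-\infty,v]_{\overline{\mathcal{A}}^{sa}}$. For each $w\in(-\infty,v]_{\overline{\mathcal{A}}^{sa}}$, regularity of $\mathcal{A}^{sa}$ in $\overline{\mathcal{A}}^{sa}$ gives $w=\sup_{\overline{\mathcal{A}}^{sa}}(-\infty,w]_{\mathcal{A}^{sa}}$, and applying Lemma~\ref{regular-sup-preserving} to the regular inclusion $\overline{\mathcal{A}}^{sa}\hookrightarrow\mathcal{V}(\overline{\mathcal{A}})$ promotes this identity to $w=\sup_{\mathcal{V}(\overline{\mathcal{A}})}(-\infty,w]_{\mathcal{A}^{sa}}$. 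A short upper-bound chase then finishes the argument: any upper bound in $\mathcal{V}(\overline{\mathcal{A}})$ of the set $(-\infty,v]_{\mathcal{A}^{sa}}$ automatically dominates each such $w$, hence dominates $\sup_{\mathcal{V}(\overline{\mathcal{A}})}(-\infty,v]_{\overline{\mathcal{A}}^{sa}}=v$, as desired.

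Once regularity has been secured, the sup-preserving property of the embedding $\mathcal{A}^{sa}\hookrightarrow\mathcal{V}(\overline{\mathcal{A}})$ drops out directly from Lemma~\ref{regular-sup-preserving}, applied one last time. There is no serious obstacle here; the only mildly delicate point is the upper-bound chase, where one must verify that the suprema computed at two different levels of the tower genuinely agree, and this is precisely what Lemma~\ref{regular-sup-preserving} is designed to guarantee. This is also the reason the authors can state that the proposition follows \emph{immediately} from Lemma~\ref{regular-sup-preserving} together with \cite[Lemma~2.5]{Hamana}, the latter presumably packaging the transitivity step in essentially the form just outlined.
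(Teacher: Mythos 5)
Your proof is correct and follows essentially the same route as the paper: the paper derives the proposition immediately from Lemma~\ref{regular-sup-preserving} together with Hamana's Lemma~2.5, the latter being precisely the transitivity-of-regularity statement for the tower $\mathcal{A}^{sa}\hookrightarrow\overline{\mathcal{A}}^{sa}\hookrightarrow\mathcal{V}(\overline{\mathcal{A}})$ that you prove by hand via the upper-bound chase. Your argument simply unpacks the cited lemma rather than invoking it.
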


We are now in a position to prove Theorem \ref{extension}, which reduces Problem \ref{problem1}, and more generally, the problem of studying order isomorphisms between the self-adjoint parts of $C^*$-algebras, to the case of monotone complete $C^*$-algebras.

\begin{proof}[Proof of Theorem \ref{extension}:]
	First we prove that $\Phi$ is extendible to a unique order isomorphism $\Phi_\mathcal{V}:\mathcal{V}(\overline{\mathcal{A}})\to \mathcal{V}(\overline{\mathcal{B}})$. Let $v\in\mathcal{V}(\overline{\mathcal{A}})$, then $\Phi((-\infty,v]_{\mathcal{A}^{sa}})$ is bounded by  $\Phi(\|v\|)$, that is, $w\leq\Phi(\|v\|)$ for all $w\in \Phi((-\infty,v]_{\mathcal{A}^{sa}})$; we write $\Phi((-\infty,v]_{\mathcal{A}^{sa}})\leq \Phi(\|v\|)$. Since $\mathcal{V}(\overline{\mathcal{B}})$ is boundedly complete, we obtain that \begin{center}
		$\Phi_\mathcal{V}(v):=\sup_{\mathcal{V}(\overline{\mathcal{B}})}\Phi((-\infty,v]_{\mathcal{A}^{sa}})\in \mathcal{V}(\overline{\mathcal{B}})$.
	\end{center}

We will prove that  \begin{equation}\label{i}
		\Phi([v,\infty)_{\mathcal{A}^{sa}})=[\Phi_\mathcal{V}(v),\infty)_{\mathcal{B}^{sa}}.   
	\end{equation}
	Indeed, notice that $[v,\infty)_{\mathcal{A}^{sa}}= \{w\in \mathcal{A}^{sa}: w\geq (-\infty,v]_{\mathcal{A}^{sa}}\}$.  Then $$\begin{array}{lllll}
		\Phi([v,\infty)_{\mathcal{A}^{sa}})&=&  \{w\in \mathcal{B}^{sa}: w\geq \Phi((-\infty,v]_{\mathcal{A}^{sa}})\}\\ \\
		&=& \{w\in \mathcal{B}^{sa}: w\geq \Phi_\mathcal{V}(v)\}\\ \\ 
		&=& [\Phi_\mathcal{V}(v),\infty)_{\mathcal{B}^{sa}}.
	\end{array}
	$$
	
	Using \eqref{i}, a  similar reasoning  shows that \begin{equation}\label{ii}
		\Phi((-\infty,v]_{\mathcal{A}^{sa}})=(-\infty,\Phi_\mathcal{V}(v)]_{\mathcal{B}^{sa}}.
	\end{equation}  
	
	The map $\Phi_{\mathcal{V}}:\mathcal{V}(\overline{\mathcal{A}})\to \mathcal{V}(\overline{\mathcal{B}})$ given  by $v\mapsto \Phi_{\mathcal{V}}(v)$ is well defined. Now we will prove the bijectivity of $\Phi$. Using Proposition \ref{A-regular}, we deduce the injectivity of $\Phi$ directly from  \eqref{i}.  Let $w\in  \mathcal{V}(\overline{\mathcal{B}})$ and  $v=\sup_{\mathcal{V}(\overline{\mathcal{A}})}\Phi^{-1}((-\infty,w]_{\mathcal{B}^{sa}})$. Since $\Phi$ and $\Phi^{-1}$ have the same properties, using \eqref{ii} we obtain that $$\Phi^{-1}((-\infty,w]_{\mathcal{B}^{sa}})=(-\infty,v]_{\mathcal{A}^{sa}},$$
	or equivalently, $\Phi_\mathcal{V}(v)=w$. This shows that $\Phi_\mathcal{V}$ is surjective, and then $\Phi_\mathcal{V}$ is bijective. The fact that $\Phi_\mathcal{V}$ preserves the order in both directions comes immediately from \eqref{ii} and  the equivalence that $v\leq v'$ if, and only if $(-\infty,v]_{\mathcal{A}^{sa}}\subseteq (-\infty,v']_{\mathcal{A}^{sa}}$, for all $v,v'\in \mathcal{V}(\overline{\mathcal{A}})$. Consequently, $\Phi_\mathcal{V}$ is an order isomorphism satisfying $\Phi_\mathcal{V}(v)=\Phi(v)$ for all $v\in\mathcal{A}^{sa}$. 
	
	Since the monotone closure of $\mathcal{A}^{sa}$ in $\mathcal{V}(\overline{\mathcal{A}})$ is the intersection of all monotone closed subset of $\mathcal{V}(\overline{\mathcal{A}})$ which contain $\mathcal{A}^{sa}$. Then, $\Phi_\mathcal{V}$ maps  the monotone closure of $\mathcal{A}^{sa}$ onto that  of $\mathcal{B}^{sa}$.  By Lemma \ref{regular-sup-preserving},  the monotone closure of $\mathcal{A}^{sa}$  in $\mathcal{V}(\overline{\mathcal{A}})$  is equal to the   monotone closure of $\mathcal{A}^{sa}$ in $\overline{\mathcal{A}}^{sa}$, which is $\overline{\mathcal{A}}^{sa}$. Then, $\Phi_\mathcal{V}(\overline{\mathcal{A}}^{sa})=\overline{\mathcal{B}}^{sa}$. Thus, to complete the proof, we denote the restriction of $\Phi_\mathcal{V}$ to $\overline{\mathcal{A}}^{sa}$ by $\overline{\Phi}$.
\end{proof}

\section{Remarks and intriguing questions}\label{S6}

Our results achieve a significant level of generality, encompassing all prior studies in this field. Additionally, the description encompassing all operator intervals in  $AW^*$-algebras is optimal and offers a comprehensive description, with the sole exception of effect algebras.   Notably, the formulas presented in Theorem \ref{general_formula} are not unique.

\begin{rem}
Using a similar approach as in \cite[Theorem 2.2]{Our}, we can prove that the conditions $\textit{(1)}$  and  $\textit{(2)}$ in Theorem \ref{general_formula} are equivalent to:	 $$\Phi(a)=T\left( a(T^2-1)+1\right) ^{-1}aT, \qquad a\in\mathcal{A}^1,$$
where $T=(\Phi(1/2)^{-1}-1)^{-1/2}$.
\end{rem}

For every real
number $\alpha < 1$, we define the function $f_{\alpha} : [0, 1] \to[0, 1]$ by
$$f_{\alpha}(t) = \dfrac{t}{t\alpha+1-\alpha}, \qquad t\in[0, 1].$$
\begin{rem}
Paralleling theorems 2.2 and 2.3 of~\cite{Semrl} (while retaining the notation of Theorem~\ref{general_formula} and using similar arguments from its proof), we can show that if $\mathcal{A}$ and $\mathcal{B}$ are finite $AW^*$-algebras, then the following conditions are equivalent:

\begin{enumerate}
	\item $\Phi$ is an order isomorphism.
	\item There exist a  Jordan $^*$-isomorphism $J:\mathcal{A}\to\mathcal{B}$,    an invertible element $T\in\mathcal{R(B)}$ and two real numbers $ 0 < \alpha < 1, \beta < 0$ such that $$\Phi(a)=f_{\beta}\left( (f_{\alpha}(TT^*))^{-1/2} f_{\alpha}(TJ(a)T^*) (f_{\alpha}(TT^*))^{-1/2}\right), \qquad a\in\mathcal{A}^{1}.$$
\end{enumerate} 	
\end{rem}	

In the finite case, we have a complete description of all order isomorphisms between effect algebras. However, the  properly infinite case remains open. We believe that studying the preservation of invertibility under order isomorphisms is a key approach in this case. We observe that order isomorphisms between effect algebras preserve elements with the unity as their range projection, which are essentially the injective operators (if we consider the elements of $C^*$-algebra as operators acting on a Hilbert space). Consequently, in general, invertibility can only be preserved through these isomorphisms is only achievable if we are willing to work with  unbounded operators or with regular Baer rings. 

If an extension analogous to Theorem~\ref{extension} holds for other operator intervals, Theorem~\ref{intervals}  
immediately yields the general forms of all order isomorphisms between such intervals in arbitrary $C^*$-algebras.   This methodology, being both efficient and optimal, constitutes the most streamlined approach to the problem to date.

To conclude this work, we formulate the following questions, motivated by   Theorem \ref{extension},
and  Theorem \ref{general_formula}.  
An affirmative answer to these questions would enable a complete characterization  
of all order isomorphisms between operator intervals within general $C^*$-algebras.  

\begin{prob}\label{final-questions}  
	\begin{itemize}  
		\item Let $\mathcal{A}$ be a $C^*$-algebra. Is the monotone closure of $\mathcal{A}^1$ in $\overline{\mathcal{A}}^1$ equal to $\overline{\mathcal{A}}^1$ itself?  
		
		\item Let $\mathcal{A}$ be a properly infinite $AW^*$-algebra. Does there exist  
		an embedding of $\mathcal{A}$ into a regular Baer $^*$-ring?  
	\end{itemize}  
\end{prob}

\section*{Acknowledgement}
The author expresses his sincere gratitude to his advisor, Professor Zine El Abidine Abdelali, from the Department of Mathematics, Faculty of Sciences, Mohamed V University in Rabat, for his valuable guidance and constructive suggestions, which have greatly improved the quality of this manuscript.

\section*{Disclosure statement}
No potential conflict of interest was reported by the author.

\section*{Funding }
The author has no relevant financial or non-financial interests to disclose.

\section*{Data availability statement }

Data sharing not applicable to this article as no datasets  were generated  or analysed during the current study.

\end{document}